\DeclareMathAlphabet\mathcalbf{OMS}{cmsy}{b}{n}
\DeclareMathAlphabet\EuScript{U}{eus}{m}{n}
\DeclareMathAlphabet\EuScriptBold{U}{eus}{b}{n}
\numberwithin{equation}{section}
\newtheorem{theorem}{Theorem}[section]
\newtheorem{corollary}[theorem]{Corollary}
\newtheorem{proposition}[theorem]{Proposition}
\newtheorem{definition}[theorem]{Definition}
\newtheorem{remark}[theorem]{Remark}
\def\a{\alpha}
\def\rn{\mathbb{R}^n}
\def\lam{\lambda}
\def\d{\delta}
\def\les{\lesssim}
\def\v{\varepsilon}
\def\bp{\begin{proof}}
\def\ep{\end{proof}}
\begin{document}
\allowdisplaybreaks

\title[]
{Fractional Fourier series on the torus and applications}
\author{Chen Wang, Xianming Hou, Qingyan Wu, Pei Dang and Zunwei Fu}

\address{Chen Wang,
School of Mathematics and Statistics, Linyi University, Linyi 276000, China}
\email{18678705228@163.com}

\address{Xianming Hou,
School of Mathematics and Statistics, Linyi University, Linyi 276000, China}
\email{houxianming37@163.com}

\address{Qingyan Wu,
School of Mathematics and Statistics, Linyi University, Linyi 276000, China}
\email{wuqingyan@lyu.edu.cn}

\address{Pei Dang,
Faculty of Innovation Engineering, Macau University of Science and Technology, Macau, China}
\email{pdang@must.edu.mo}

\address{Zunwei Fu,
College of Information Technology, The University of Suwon, Bongdameup, Hwaseong-si,
Gyeonggi-do, 445-743, Korea}
\email{zwfu@suwon.ac.kr}

%    General info
\subjclass[2010]{42A20,\ 41A35}
%\date{\today}
\keywords{fractional Fourier series, fractional approximate identity, fractional Fej\'{e}r kernel, fractional Fourier inversion, convergence}

\begin{abstract}
In this paper, we introduce the fractional Fourier series on the fractional torus and proceed to investigate several fundamental aspects. Our study includes topics such as fractional convolution, fractional approximation, fractional Fourier inversion, and the Poisson summation formula. We also explore the relationship between the decay of fractional Fourier coefficients and the smoothness of a function. Additionally, we establish the pointwise convergence of the fractional Fourier series using the properties of the fractional F\'{e}jer kernel. Finally, we demonstrate the practical applications of the fractional Fourier series, particularly in the context of solving fractional partial differential equations with periodic boundary conditions, and showcase the utility of approximation methods on the fractional torus for recovering non-stationary signals.
\end{abstract}

\maketitle
%---------------------------------------------------------1----------------------------------------

%\tableofcontents

\section{Introduction and statement of main results}
\setcounter{equation}{0}

It is well known that the Fourier series plays a crucial role in studying boundary value problems, like the vibrating string and the heat equation. Fourier's approach to solving the problem of heat distribution in the cube $\mathbf{T}^3$ by using the triple sine series of three-variable functions. Dirichlet \cite{Dirichlet} made significant contributions by studying the pointwise convergence of the Fourier series for piecewise monotonic functions in terms of the certain kernel on the circle. Inspired by the periodicity observed in astronomical and geophysical phenomena, many scholars have dedicated their efforts to studying expansions of periodic functions, as mentioned in \cite{van}. The relevance of these expansions in both theoretical frameworks and practical applications underscores the importance of analyzing Fourier series properties on the torus.

The $n$-torus $\mathbf{T}^n$ is defined as the cube $[0,1]^n$ with opposite sides identified. It is important to note that functions on $\mathbf{T}^n$ exhibit periodicity with a period of 1 in every coordinate. The $m$-th Fourier coefficient can be defined using the Fourier transform as follows:
\begin{align*}
\mathcal{F}(f)(m)=\int_{\mathbf{T}^n} f(x)e^{-2\pi i m\cdot x}dx,
\end{align*}
where $f\in L^1(\mathbf{T}^n)$, $m\in \mathbf{Z}^n$. The Fourier series of $f$ is the series
\begin{align}\label{eq1.1}
\sum_{m\in \mathbf{Z}^n}\mathcal{F}(f)(m)e^{2\pi i m\cdot x}.
\end{align}
Kolmogorov \cite{Kolmogorov1, Kolmogorov2} demonstrated the existence of a function $f\in L^1(\mathbf{T}^1)$ whose Fourier series diverges almost everywhere. For $1<p<\infty$, Carleson and Hunt \cite{Carleson, Hunt} observed that the Fourier series of an $L^p(\mathbf{T}^1)$ function converges almost everywhere. Fefferman \cite{Fefferman} presented an alternative proof for this result. Lacey and Thiele \cite{LT}, by combining ideas from \cite{Carleson} and \cite{Fefferman}, introduced a third approach to Carleson's theorem on the pointwise convergence of the Fourier series, providing valuable insights into the subject.
For a comprehensive understanding of the essential properties and applications of the Fourier series on the torus, one can refer to \cite{Victor, G, HL, MZ} and the references therein.

With the development of signal processing, the Fourier transform has revealed limitations in its ability to handle non-stationary signals. To address this challenge, the fractional Fourier transform (FRFT) was introduced. In this situation, the fractional Fourier transform was proposed to overcome this problem.
Additionally, when a signal $f$ exhibits $t$-periodicity with $t<1$, it is not well-defined on $\mathbf{T}^n$. In such cases, we can not obtain its frequency components. Therefore, we need to introduce the matched fractional torus to study such a kind of signal (or function).
\begin{definition}
Let $\a\in \mathbb{R}$, $\a\neq \pi \mathbf{Z}$. The fractional torus of order $\a$, denoted by $\mathbf{T}^n_\a$, is the cube $[0,|\sin\a|]^n$ with opposite sides identified.
\end{definition}
\noindent Note that $\mathbf{T}^n_\a=\mathbf{T}^n$ for $\a=\pi/2+2\pi\mathbf{Z}$, see Figure \ref{fig:1.1} (a) for two-dimensional torus. The graphs of fractional torus $\mathbf{T}^2_\a$ for $\a=\pi/2, \pi/3, \pi/6$ are shown in  Figure \ref{fig:1.1} (b). Functions on $\mathbf{T}^n_\a$ are functions $f$ on $\rn$ that satisfy $f(x+|\sin\a|m)= f(x)$ for any $x\in \rn$ and $m\in  \mathbf{Z}^n$. Such functions are called $|\sin\a|$-periodic in every coordinate for fixed $\a$.
\begin{figure}[htb]
\centering
\subfigure[Torus $\mathbf{T}^2$]{
\includegraphics[width=0.35\linewidth]{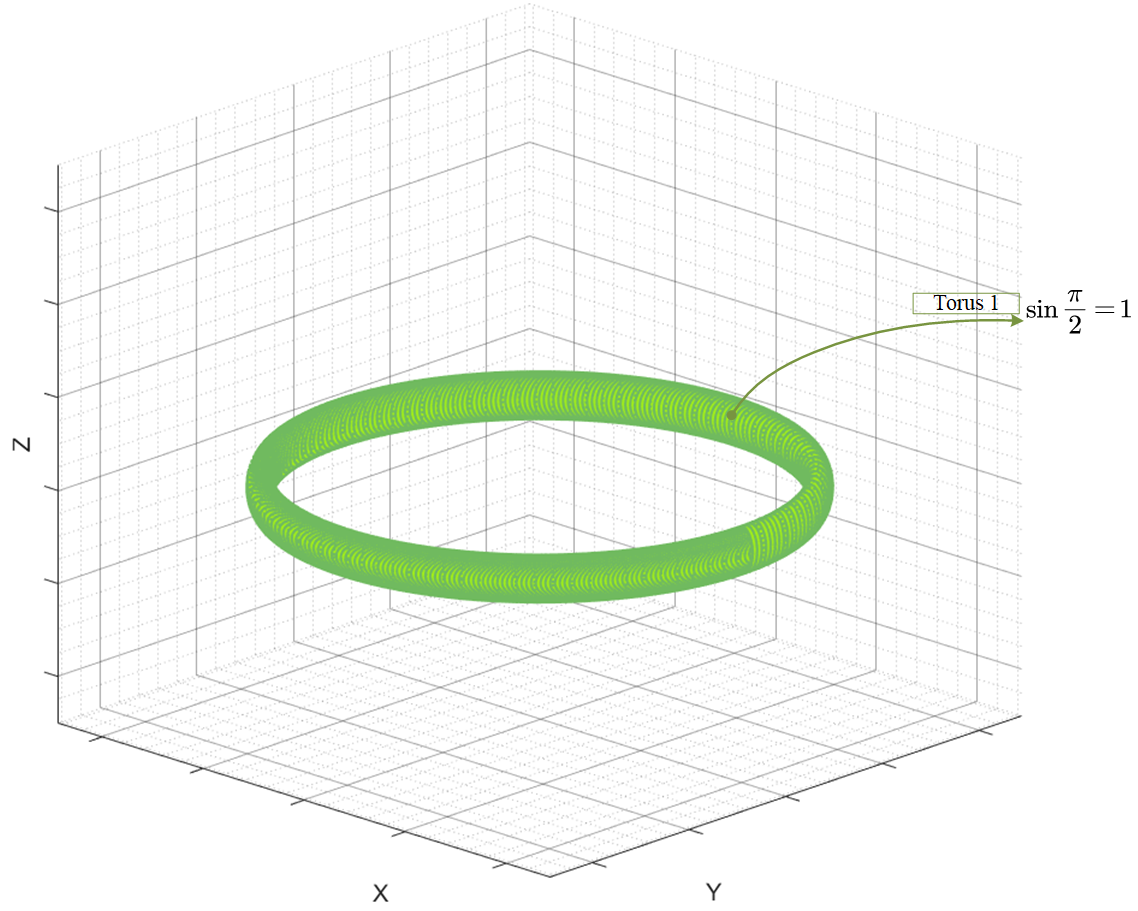}} \hspace{0.5em}
\subfigure[fractional torus of order $\a$ $\mathbf{T}^2_\a$]{
\includegraphics[width=0.35\linewidth]{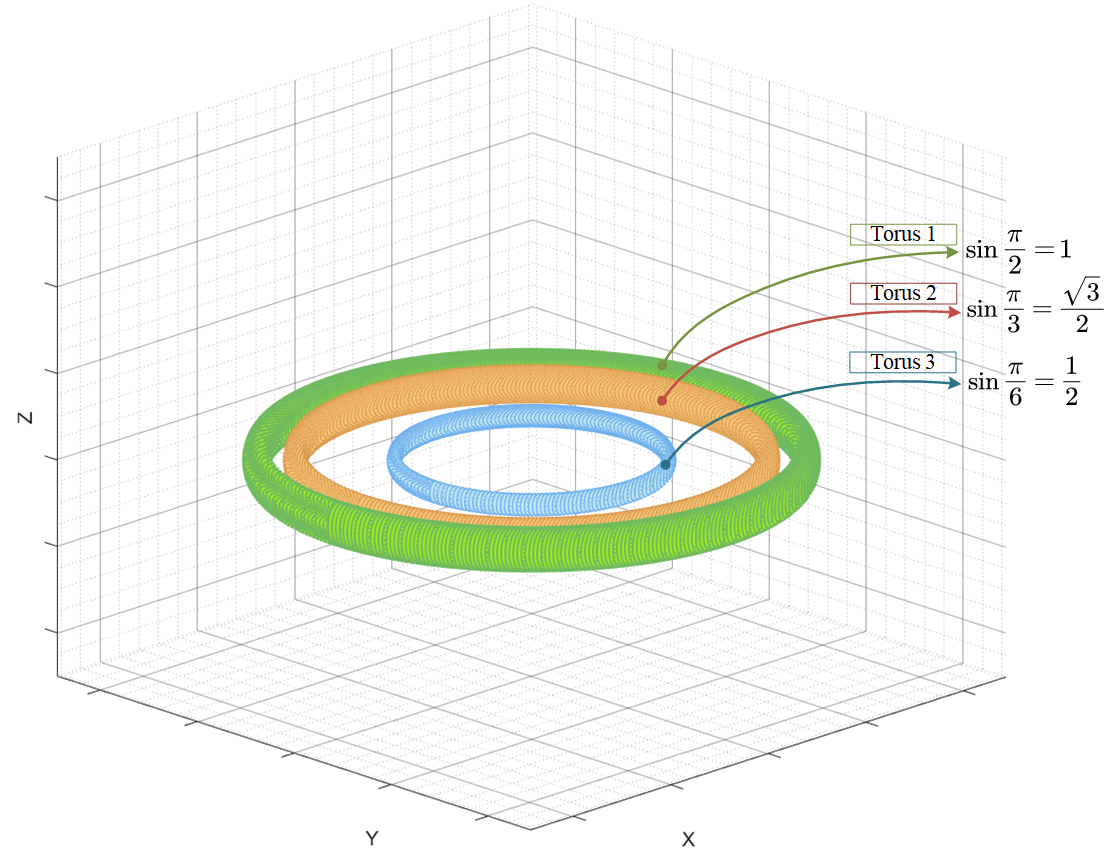}} \caption{The two-dimensional fractional torus of order $\a$ $\mathbf{T}^2_\a$}%
\label{fig:1.1}%
\end{figure}

The idea of the fractional power of the Fourier operator first appeared in the work of Wiener \cite{Wiener}. Namias \cite{Namias} introduced the FRFT to address specific types of ordinary and partial differential equations encountered in quantum mechanics. McBride and Kerr \cite{MK} provided a rigorous definition of the FRFT in integral form on the Schwartz space $S(\mathbb{R})$ based on a modification of Namias' fractional operators. Subsequently, Kerr \cite{Kerr} discussed the $L^2(\mathbb{R})$ theory of FRFT. Zayed \cite{Zayed3} introduced a novel convolution structure for the FRFT that preserves the convolution theorem for the Fourier transform. In \cite{Zayed2}, Zayed presented a new class of fractional integral transforms, encompassing the fractional Fourier and Hankel transforms. Additionally, Zayed \cite{Zayed1} explored the two-dimensional FRFT and investigated its properties, including the inversion theorem, convolution theorem, and Poisson summation formula. Kamalakkannan and Roopkumar \cite{KP} established the convolution theorem and product theorem for the multidimensional fractional Fourier transform. The $L^p(\mathbb{R})$ theory of FRFT for $1\leq p<2$ was established in \cite{CFGW}. Fu et al \cite{FGLS, FLYY} introduced the Riesz transform associated with the FRFT and explored its applications in image edge detection.
The FRFT also has various applications in many fields, such as optics \cite{OZK}, signal processing \cite{ PYL, TDZW}, image processing \cite{LRZZ, LTW, YWJK, YWJ}, and so on. In this paper, our focus centers on studying the convergence and applications of the fractional Fourier series on the torus.

This paper is organized as follows. In Section 2, we introduce fractional Fourier coefficients and give some basic properties including fractional convolution on $\mathbf{T}^n_\a$. In Section 3, we establish the fractional Fourier inversion and Poisson summation formula. Section 4 is devoted to the relationship between the decay of fractional Fourier coefficients and the smoothness of a function. The pointwise convergences of fractional Fej\'{e}r means are given in Section 5. In Section 6, using fractional Fourier series on $\mathbf{T}^n_\a$, we obtain the solutions of the fractional heat equation and fractional Dirichlet problem. Finally, we present a non-stationary signal on $\mathbf{T}^n_\a$, which can be recovered by an approximating method.

\section{Fractional Fourier series on the torus}
In this section, we begin by introducing the definition of fractional Fourier series  in the setting $\mathbf{T}^n_\a$. Subsequently, we establish some basic facts of fractional Fourier analysis. Additionally, we will give the fractional convolution and obtain fractional approximation in $L^p(\mathbf{T}^n_\a)$, $1\leq p\leq\infty$.

Let $\a\in \mathbb{R}$, $\a\neq \pi\mathbf{Z}$. Set $e_{\alpha}(x):=e^{\pi i |x|^2\cot\a}$ and $e_{\alpha}f(x):=e_{\alpha}(x)f(x)$ for a function $f$ on $\mathbf{T}^n_\a$.
\begin{definition}
Let $1\leq p\leq\infty$. We say that a function $f$ on $\mathbf{T}^n_\a$ lies in the space $e_{-\alpha}L^p(\mathbf{T}^n_\a)$ if
\begin{align*}
 f(x)=e_{-\alpha}g(x),\quad g\in L^p(\mathbf{T}^n_\a)
\end{align*}
 and satisfies $\|f\|_{L^p(\mathbf{T}^n_\a)}<\infty$.

\end{definition}

\begin{definition}\label{def1.1}\,\
For a complex-valued function $f\in e_{-\alpha}L^1(\mathbf{T}^n_\a)$, $\a\in \mathbb{R}$ and $m\in \mathbf{Z}^n$, we define
\[\mathcal{F}_\a (f)(m)=\displaystyle\left\{\begin{array}{ll}
\displaystyle\int_{\mathbf{T}^n_\a}f(x)K_\a(m,x)dx,&\a \neq \pi\mathbf{Z}, \\
f(m),\,&\a=2\pi\mathbf{Z},\\
f(-m),\,&\a=2\pi\mathbf{Z}+\pi,
\end{array}\right.\]
where
\begin{align*}
K_\a(m,x):=A_\a^n e_{\alpha}(x)e_{\alpha}(m,x)e_{\alpha}(m),
\end{align*}
here $A_\a=\sqrt{1-i\cot\a}$ and $e_{\alpha}(m,x)=e^{-2\pi i(m\cdot x)\csc \a}$.
We call $\mathcal{F}_\a (f)(m)$ the $m$-th fractional Fourier coefficient of order $\a$ of $f$.
\end{definition}

In order to state our results, we recall some notations. The spaces $C^k(\mathbf{T}^n_\a)$, $k\in \mathbf{Z}^+$, are defined as the sets of functions $\phi$ for which $\partial^\beta\phi$ exist and are continuous for all $|\beta|\leq k$. When $k = 0$ we set $C^0(\mathbf{T}^n_\a)=C(\mathbf{T}^n_\a)$ to be the space of continuous functions on $\mathbf{T}^n_\a$. Let
\begin{align*}
C^\infty(\mathbf{T}^n_\a):=\bigcap_{k=0}^{\infty}C^k(\mathbf{T}^n_\a).
\end{align*}
\begin{definition}
For  $0\leq k\leq\infty$. The space $e_{-\alpha}C^k(\mathbf{T}^n_\a)$ is defined to be the space of functions $f$ on $\mathbf{T}^n_\a$ such that
\begin{align*}
 f(x)=e_{-\alpha}g(x),\quad g\in C^k(\mathbf{T}^n_\a).
\end{align*}

\end{definition}
Notice that the spaces $e_{-\alpha}C^k(\mathbf{T}^n_\a)$ are contained in $e_{-\alpha}L^p(\mathbf{T}^n_\a)$ for all $1\leq p\leq \infty$.

We denote by $\overline{f}$ the complex conjugate of the function $f$, by $\tilde{f}$ the function $\tilde{f}(x)= f(-x)$, and by $\tau_y(f)(x)$ the function $\tau_y(f)(x)=f(x-y)$ for all $y\in \mathbf{T}^n_\a$. Since $\a = \pi\mathbf{Z}$, we get $\mathbf{T}^n_\a=\{0\}$. Hence, throughout this paper, for $\a \neq \pi\mathbf{Z}$, we always assume $\mathbf{T}^n_\a=[-|\sin\a|/2,|\sin\a|/2]^n$. Next, we give some elementary properties of fractional Fourier coefficients.
\begin{proposition}\label{pro1.2}\,\
Let $f,  g$ be in $e_{-\alpha} L^1(\mathbf{T}^n_\a)$. Then for all $m,k \in \mathbf{Z}^n$, $\lam \in \mathbb{C}$, $y\in \mathbf{T}^n_\a$, we have
\begin{enumerate}
[(1)]
\item $\mathcal{F}_\a (f+g)(m)=\mathcal{F}_\a (f)(m)+\mathcal{F}_\a (g)(m)$;
\item $\mathcal{F}_\a (\lambda f)(m)=\lambda \mathcal{F}_\a (f)(m)$;
\item $\mathcal{F}_\a ( \overline{f})(m)= \overline{{\mathcal{F}_{-\a}} (f)(m)}$;
\item $\mathcal{F}_\a ( \widetilde{f})(m)= \mathcal{F}_{\a} (f)(-m)$;
\item $\mathcal{F}_\a [e_{-\alpha}\tau_y(e_{\alpha}f)](m)=\mathcal{F}_{\a} (f)(m)e_{\a}(m,y)$;
\item $\mathcal{F}_{\a} [e_{-\alpha}(k,\cdot)f](m)e^{-2\pi i(m\cdot k)\cot\a}e_\a({k})= \mathcal{F}_\a (f)(m-k)$;
\item $\mathcal{F}_\a (f)(0)=A_\a^n \displaystyle\int_{\mathbf{T}^n_\a}e_{-\alpha}f(x)dx$;
\item $\sup_{m\in \mathbf{Z}^n}|\mathcal{F}_\a (f)(m)|\leq|\csc\a|^{n/2}\|f\|_{L^1(\mathbf{T}^n_\a)}$;
\item $\mathcal{F}_\a [e_{-\alpha}\partial^\beta(e_{\alpha}f)](m)=(2\pi i m\csc\a)^\beta\mathcal{F}_\a (f)(m),$\; whenever $f\in e_{-\a} C^\beta(\mathbf{T}^n_\a)$.
\end{enumerate}
\end{proposition}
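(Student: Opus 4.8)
The plan is to reduce all nine identities to one ``chirp-removal'' formula. Any $f\in e_{-\a}L^1(\mathbf{T}^n_\a)$ can be written $f=e_{-\a}g$ with $g=e_\a f\in L^1(\mathbf{T}^n_\a)$ (since $|e_{-\a}|\equiv 1$, membership in $e_{-\a}L^p(\mathbf{T}^n_\a)$ is the same as membership in $L^p(\mathbf{T}^n_\a)$, which also guarantees that $\overline f$, $\widetilde f$, translates and modulations of $f$ stay in the relevant class), and in the kernel $K_\a(m,x)=A_\a^n e_\a(x)e_\a(m,x)e_\a(m)$ the factor $e_\a(x)$ exactly cancels the $e_{-\a}(x)$ sitting inside $f$. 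This gives
\[
\mathcal{F}_\a(f)(m)=A_\a^n e_\a(m)\,c_m(g),\qquad c_m(g):=\int_{\mathbf{T}^n_\a}g(x)\,e_\a(m,x)\,dx .
\]
The exponentials $e_\a(m,\cdot)$ are genuinely $|\sin\a|$-periodic because $m_j|\sin\a|\csc\a=m_j\,\mathrm{sgn}(\sin\a)\in\mathbf{Z}$, so $g\,e_\a(m,\cdot)$ is periodic and $c_m(g)$ is, after the rescaling $x\mapsto|\sin\a|x$, an ordinary Fourier coefficient. Consequently each item of the proposition becomes a classical statement about $c_m$ plus bookkeeping of the prefactors $A_\a^n$ and $e_\a(m)$.

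For the routine items: (1) and (2) are the linearity of $g\mapsto c_m(g)$ together with $e_\a(f+g)=e_\a f+e_\a g$. For (4) I would substitute $x\mapsto-x$ — legitimate since the cube $[-|\sin\a|/2,|\sin\a|/2]^n$ is symmetric — and use $K_\a(m,-x)=K_\a(-m,x)$, which holds because $e_\a(-x)=e_\a(x)$, $e_\a(m,-x)=e_\a(-m,x)$ and $e_\a(m)=e_\a(-m)$. For (3) I would verify $K_{-\a}(m,x)=\overline{K_\a(m,x)}$ from $\overline{e_\a(x)}=e_{-\a}(x)$, $\overline{e_\a(m,x)}=e_{-\a}(m,x)$, $\overline{e_\a(m)}=e_{-\a}(m)$ and $\overline{A_\a}=\overline{\sqrt{1-i\cot\a}}=\sqrt{1+i\cot\a}=A_{-\a}$ (the principal branch is harmless since $1\mp i\cot\a$ has positive real part), together with $\mathbf{T}^n_{-\a}=\mathbf{T}^n_\a$; conjugating the defining integral then closes it. Item (7) is the case $m=0$ of the definition ($e_\a(0,x)=e_\a(0)=1$), and (8) follows from $|K_\a(m,x)|=|A_\a|^n=|\csc\a|^{n/2}$, since $|A_\a|^2=|1-i\cot\a|=\sqrt{1+\cot^2\a}=|\csc\a|$, together with the triangle inequality.

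The three items that need care are (5), (6) and (9), and in each the recurring trick is to write the transformed function as $e_{-\a}$ times a \emph{still periodic} factor and only then invoke the displayed formula. For (5), $e_{-\a}\tau_y(e_\a f)=e_{-\a}(\tau_y g)$; since $g\,e_\a(m,\cdot)$ is periodic, the change of variables yields $c_m(\tau_y g)=e_\a(m,y)\,c_m(g)$, and comparison with $\mathcal{F}_\a(f)(m)=A_\a^n e_\a(m)c_m(g)$ gives (5) at once. For (6), $e_{-\a}(k,\cdot)f=e_{-\a}\bigl(e_{-\a}(k,\cdot)g\bigr)$ with $e_{-\a}(k,\cdot)g$ periodic, and $c_m\bigl(e_{-\a}(k,\cdot)g\bigr)=c_{m-k}(g)$; the displayed formula then gives $\mathcal{F}_\a[e_{-\a}(k,\cdot)f](m)=A_\a^n e_\a(m)c_{m-k}(g)$ against $\mathcal{F}_\a(f)(m-k)=A_\a^n e_\a(m-k)c_{m-k}(g)$, so the two differ by $e_\a(m)/e_\a(m-k)=e^{\pi i(|m|^2-|m-k|^2)\cot\a}=e^{2\pi i(m\cdot k)\cot\a}e_{-\a}(k)$, which is exactly the correction in (6). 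For (9), $f=e_{-\a}g$ with $g\in C^\beta(\mathbf{T}^n_\a)$ gives $e_{-\a}\partial^\beta(e_\a f)=e_{-\a}\partial^\beta g$, and the displayed formula reduces the claim to $c_m(\partial^\beta g)=(2\pi i m\csc\a)^\beta c_m(g)$; this is $|\beta|$ integrations by parts, with boundary terms vanishing because $g$ and $e_\a(m,\cdot)$ are $|\sin\a|$-periodic, using $\partial_{x_j}e_\a(m,x)=-2\pi i m_j\csc\a\,e_\a(m,x)$ and $(-1)^{\beta_j}(-2\pi i m_j\csc\a)^{\beta_j}=(2\pi i m_j\csc\a)^{\beta_j}$.

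There is no deep obstacle; the only genuine pitfall is mishandling the quadratic chirp $e^{\pi i|x|^2\cot\a}$. In (5) one is tempted to pair $e_{-\a}(x)$ with $e_\a(x-y)$, and in (6) to pair it with the modulation factor, but then the integrand fails to be periodic and the change of variables or index shift is unjustified. The safe route is always to cancel $e_{-\a}(x)$ against the $e_\a(x)$ built into $K_\a$ \emph{first}; after that cancellation every object in sight is $|\sin\a|$-periodic, the classical arguments apply verbatim, and the prefactors $A_\a^n$ and $e_\a(m)$ simply ride along to produce the stated correction factors.
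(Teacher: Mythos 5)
Your proof is correct and follows essentially the same route as the paper's: the change of variables plus periodicity for (5), the expansion of the quadratic phase $|m|^2-|m-k|^2$ for (6), and integration by parts for (9) are the same computations, merely organized through the uniform reduction $\mathcal{F}_\a(f)(m)=A_\a^n e_\a(m)\,c_m(e_\a f)$, and your explicit check that $e_\a(m,\cdot)$ is genuinely $|\sin\a|$-periodic (since $m_j|\sin\a|\csc\a\in\mathbf{Z}$) is a useful detail the paper leaves implicit. One small remark: your reduction yields $\mathcal{F}_\a(f)(0)=A_\a^n\int_{\mathbf{T}^n_\a}(e_{\a}f)(x)\,dx$, which indicates that the sign in the stated item (7) (which has $e_{-\a}f$) is a typo in the proposition rather than something your argument misses.
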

\begin{proof}
It is obvious that properties (1)-(4) and (7) hold. We now pay attention to (5). Note that
\begin{align*}
\mathcal{F}_\a [e_{-\alpha}\tau_y\big(e_{\alpha}f\big)](m)
%=&\int_{\mathbf{T}^n_\a}\mathcal{M}_{-\a}\tau_y(f)(x)K_{\a}(m,x) dx\\
%=&A_{\a}\int_{\mathbf{T}^n_\a}e^{-\pi ix^2\cot \a}f(x-y)e^{\pi ix^2\cot \a-2\pi im\cdot x\csc \a+\pi im^2\cot \a}dx\\
=&A_{\a}^ne_{\alpha}(m)\int_{\mathbf{T}^n_\a}(e_{\alpha}f)(x-y)e_{\alpha}(m,x)dx\\
=&A_{\a}^ne_{\alpha}(m)\int_{\mathbf{T}^n_\a-y}(e_{\alpha}f)(x')e_{\alpha}(m,x'+y)dx'\\
%=&A_{\a}e^{\pi im^2\cot \a}e^{2\pi im\cdot y\csc \a}\int_{\mathbf{T}^n_\a-y}f(x')e^{2\pi im\cdot x'\csc \a}dx'\\
%=&A_{\a}^ne^{\pi im^2\cot \a}e^{2\pi im\cdot y\csc \a}\int_{\mathbf{T}^n_\a}f(x)e^{2\pi im\cdot x\csc \a}dx\\
%=&A_{\a}^ne^{2\pi im\cdot y\csc \a}\int_{\mathbf{T}^n_\a}\mathcal{M}_{-\a}f(x)e^{\pi ix^2\cot \a-2\pi im\cdot x\csc \a+\pi im^2\cot \a}dx\\
=&\mathcal{F}_{\a} (f)(m)e_{\a}(m,y),
\end{align*}
where we make the variable change $x'=x-y$ in the second equality, and the third equality follows from the periodicity of function.

Next, we deal with (6).
\begin{align*}
\mathcal{F}_\a (f)(m-k)
%=&\int_{\mathbf{T}^n_\a}\mathcal{M}_{-\a}f(x)K_{\a}(m-k,x) dx\\
=&A_{\a}^n\int_{\mathbf{T}^n_\a}f(x)e_{\a}(x)e_{\a}(m-k)e_{\a}(m-k,x)dx\\
%=&A_{\a}^n\int_{\mathbf{T}^n_\a}f(x)e^{2\pi i kx\csc \a-2\pi im\cdot x\csc \a}dxe^{\pi im^2\cot \a-2\pi im\cdot k\cot \a+\pi ik^2\cot \a}\\
=&e_\a(k)e^{-2\pi i(m\cdot k)\cot \a}\int_{\mathbf{T}^n_\a}e_{-\a}(k,x)f(x)K_{\a}(m,x)dx\\
=&\mathcal{F}_{\a} [e_{-\a}(k,\cdot)f](m)e^{-2\pi i(m\cdot k)\cot\a}e_\a(k).
\end{align*}
It follows from the fact $|K_\a(m,x)|\leq |\csc\a|^{n/2}$ that (8) holds.

Now we turn to (9). By the periodicity and integration by parts, we have
\begin{align*}
\mathcal{F}_\a [e_{-\alpha}\partial^\beta(e_{\alpha}f)](m)
=&A_\a^n e_\a(m)\int_{\mathbf{T}^n_\a}\partial^\beta(e_{\a} f)(x)e^{-2\pi i (m\cdot x)\csc \a}dx\\
%=&A_\a^n e^{\pi i m^2\cot\a}(-1)^{|\beta|}\int_{\mathbf{T}^n_\a} f(x)(-2\pi i m\csc\a)^\beta e^{-2\pi i m\cdot x\csc \a}dx\\
=&(2\pi i m\csc\a)^\beta A_\a^n e_\a(m)\int_{\mathbf{T}^n_\a} e_{\a}f(x) e^{-2\pi i (m\cdot x)\csc \a}dx\\
=&(2\pi i m\csc\a)^\beta\mathcal{F}_\a(f)(m).
\end{align*}
This completes the proof.
\end{proof}
\begin{remark}\label{re2.7}
Suppose $f_1\in e_{-\alpha} L^1(\mathbf{T}^{n_1}_\a)$ and  $f_2\in e_{-\alpha} L^1(\mathbf{T}^{n_2}_\a)$. We obtain that the tensor function
$$(f_1\otimes f_2)(x_1,x_2)=f_1(x_1)f_2(x_2)$$
is in $e_{-\alpha} L^1(\mathbf{T}^{n_1+n_2}_\a)$. Moreover, it is easy to check
\begin{align}\label{eq2.11}
\mathcal{F}_\a\big(f_1\otimes f_2\big)(m_1,m_2)=\mathcal{F}_\a(f_1)(m_1)\mathcal{F}_\a(f_2)(m_2),
\end{align}
where $m_1\in \mathbf{Z}^{n_1}$ and $m_2\in \mathbf{Z}^{n_2}$.
\end{remark}

\begin{definition}\label{def1.2}\,\
For $\a \neq n\pi$, a trigonometric polynomials of order $\a$ on $\mathbf{T}^{n}_\a$ is a function of the form
\begin{align*}
P_\a(x)=\sum_{m\in \mathbf{Z}^n}c_{m,\a}K_{-\a}(m,x),
\end{align*}
where $m =(m_1,\ldots,m_n)$ and $c_{m,\a}$ is a constant depending on $m$ and $\a$. The degree of $P_\a$ is the largest number $|m_1|+\cdots+|m_n|$ such that $c_{m,\a}$ is nonzero. Observe that
\begin{align*}
\mathcal{F}_\a (P_\a)(m)=c_{m,\a}.
\end{align*}
\end{definition}
For $x\in\mathbf{T}^n_\a$, $f\in e_{-\alpha} L^1(\mathbf{T}^{n}_\a)$, the fractional Fourier series of order $\a$ of $f$ is the series
\begin{align}\label{eq2.1}
\sum_{m\in \mathbf{Z}^n}\mathcal{F}_\a (f)(m)K_{-\a}(m,x).
\end{align}
Now, we wonder (\ref{eq2.1}) converges in which sense? The convergence of the fractional Fourier series is the main topic in this paper. Next, we introduce a kind of the fractional convolution of order $\a$ to study the convergence of fractional Fourier series.
\begin{definition}\label{def1}\,\
Let $f\in e_{-\alpha} L^1(\mathbf{T}^{n}_\a)$ and $g\in L^1(\mathbf{T}^{n})$. Define the fractional convolution of order $\a$ as follows:
\begin{align*}
 (f\overset{\alpha}{\ast}g)(x)=|\csc \a|^n{e_{-\a}(x)\int_{\mathbf{T}^n_\a}e_{\a}(y)f(y)(\delta^\alpha g)(x-y)dy},
\end{align*}
where $(\delta^\alpha g)(x)=g(x\csc\a)$.
\end{definition}

Let $f\in e_{-\a}L^1(\mathbf{T}^n_\a)$. We have
\begin{align*}
\sum_{|m|\leq N}\mathcal{F}_\a (f)(m)K_{-\a}(m,x)&=\sum_{|m|\leq N}\int_{\mathbf{T}^n_\a} f(y)K_\a(m,y)dyK_{-\a}(m,x)\\
&=|\csc \a|^ne_{-\a}(x)\sum_{|m|\leq N}\int_{\mathbf{T}^n_\a} e_{\a}(y)f(y)e^{2\pi i m \cdot (x-y)\csc \a}dy\\
%=&|\csc \a|\sum_{|m|\leq N}e^{-x^2\pi i\cot\a}\int_{\mathbf{T}^n_\a} f(y)e^{ y^2\pi i\cot\a}e^{2\pi i m \cdot (x-y)\csc \a}dy\\
&=|\csc \a|^ne_{-\a}(x)\int_{\mathbf{T}^n_\a}e_{\a}(y)f(y)\sum_{|m|\leq N}e^{2\pi i m \cdot (x-y)\csc \a}dy\\
%=&|\csc \a|^n \Big(f\overset{\alpha}{\ast} \sum_{|m|\leq N}e^{2\pi i m \cdot (\cdot)\csc \a} \Big)(x)\\
&=:\big(f\overset{\alpha}\ast {D}_N^n \big)(x),
\end{align*}
where ${D}^n_N(x)$ is the classical multidimensional Dirichlet kernel.

Noting that one-dimensional Dirichlet kernel ${{D}}_N^1$ is not an approximate identity. This fact led Ces\`{a}ro and independently Fej\'{e}r to consider the arithmetic means of the Dirichlet kernel in dimension $1$ as follows
\begin{align*}
{F}^1_N(x):=\frac{1}{N+1}\big[{D}_0^1(x)+{D}_1^1(x)+\cdots+{D}_N^1(x)\big].
\end{align*}
By Proposition 3.1.7 in \cite{G}, we have the following equivalent way to write the kernel ${F}^1_N$:
\begin{align}\label{eq2.33}
{{F}}^1_N(x)=\sum_{j=-N}^N\Big(1-\frac{|j|}{N+1}\Big)e^{2\pi i j x}={\frac{1}{N+1}\Big(\frac{\sin\big((N+1)\pi x \big)} {\sin(\pi x)}\Big)^2}.
\end{align}
The function ${{F}}^1_N$ given by (\ref{eq2.33}) is called the Fej\'{e}r kernel.
Let $${{F}}^{1,\a}_N(x):=|\csc\a| F^1_N(x\csc\a).$$
We have the following identity for the kernel ${{F}}^{1,\a}_N$:
\begin{proposition}\label{pro1.3}\,\
For every nonnegative integer $N$, the following identity holds
\begin{align}\label{eq2.3}
{{F}}^{1,\a}_N(x)=|\csc \a|\sum_{j=-N}^N\Big(1-\frac{|j|}{N+1}\Big)e^{2\pi i j x\csc \a}={\frac{|\csc \a|}{N+1}\Big(\frac{\sin\big((N+1)\pi x \csc\a\big)} {\sin(\pi x \csc\a)}\Big)^2}
\end{align}
for all $x\in \mathbf{T}^1_\a$. Thus
$$\mathcal{F}_\a\big(e_{-\a}{{F}}^{1,\a}_N\big)(m)=A_\a e_{\a}(m)\Big(1-\frac{|m|}{N+1}\Big)$$ if $|m|\leq N$ and zero otherwise.
\end{proposition}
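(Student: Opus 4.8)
The plan is to treat the two assertions separately. The closed-form identity for $F^{1,\a}_N$ is, at bottom, just a dilation applied to the classical formula~\eqref{eq2.33}, while the formula for the fractional Fourier coefficients of $e_{-\a}F^{1,\a}_N$ reduces, after unwinding Definition~\ref{def1.1}, to an orthogonality relation over the rescaled one-torus $\mathbf{T}^1_\a=[-|\sin\a|/2,|\sin\a|/2]$.

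For the first part I would simply substitute $y=x\csc\a$ into~\eqref{eq2.33} and multiply through by $|\csc\a|$: since by definition $F^{1,\a}_N(x)=|\csc\a|\,F^1_N(x\csc\a)$, evaluating the middle and right-hand expressions of~\eqref{eq2.33} at $y=x\csc\a$ and scaling by $|\csc\a|$ yields~\eqref{eq2.3} verbatim. It is worth noting in passing that $F^1_N$ is $1$-periodic and $|\sin\a|\csc\a=\pm1\in\mathbf{Z}$, so $F^{1,\a}_N$ is genuinely $|\sin\a|$-periodic, hence a well-defined function on $\mathbf{T}^1_\a$, and it lies in $L^1(\mathbf{T}^1_\a)$ because it is a trigonometric polynomial; thus $e_{-\a}F^{1,\a}_N\in e_{-\a}L^1(\mathbf{T}^1_\a)$ and the fractional Fourier coefficients below make sense.

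For the second part I would unfold Definition~\ref{def1.1} with $n=1$ and $f=e_{-\a}F^{1,\a}_N$, using $e_\a(x)e_{-\a}(x)=1$ to cancel the Gaussian-type factors:
\begin{align*}
\mathcal{F}_\a\big(e_{-\a}F^{1,\a}_N\big)(m)
&=A_\a e_\a(m)\int_{\mathbf{T}^1_\a}e_{-\a}(x)F^{1,\a}_N(x)\,e_\a(x)\,e^{-2\pi i m x\csc\a}\,dx\\
&=A_\a e_\a(m)\int_{\mathbf{T}^1_\a}F^{1,\a}_N(x)\,e^{-2\pi i m x\csc\a}\,dx.
\end{align*}
Next I would insert the finite-sum representation of $F^{1,\a}_N$ from~\eqref{eq2.3}, interchange the (finite) sum with the integral, and reduce to evaluating $\int_{\mathbf{T}^1_\a}e^{2\pi i(j-m)x\csc\a}\,dx$.

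The only point needing genuine care — and the main obstacle such as it is — is this last integral on the dilated torus. Since $|\sin\a|\csc\a=\pm1$, the exponent at the endpoints $x=\pm|\sin\a|/2$ equals $\pm\pi i(j-m)$, so for $j\ne m$ the antiderivative contributes $\big((-1)^{j-m}-(-1)^{j-m}\big)/\big(2\pi i(j-m)\csc\a\big)=0$, while for $j=m$ the integral equals $|\sin\a|$. Hence it equals $|\sin\a|\,\delta_{jm}$, only the term with $j=m$ and $|m|\le N$ survives, and the prefactor collapses through $|\csc\a|\,|\sin\a|=1$ to give $\mathcal{F}_\a\big(e_{-\a}F^{1,\a}_N\big)(m)=A_\a e_\a(m)\big(1-\tfrac{|m|}{N+1}\big)$ for $|m|\le N$ and $0$ otherwise. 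Everything else is routine bookkeeping with the dilation factor $\csc\a$.
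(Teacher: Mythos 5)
Your proposal is correct and follows essentially the same route as the paper: unfold Definition~\ref{def1.1}, cancel the chirp factors via $e_\a(x)e_{-\a}(x)=1$, interchange the finite sum with the integral, and invoke orthogonality of $e^{2\pi i j x\csc\a}$ over $\mathbf{T}^1_\a$ (your explicit endpoint computation just makes precise the orthogonality relations the paper states without proof). The first identity is, as you say, an immediate consequence of the definition $F^{1,\a}_N(x)=|\csc\a|F^1_N(x\csc\a)$ and~\eqref{eq2.33}, which is also how the paper treats it.
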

\begin{proof}
Note that ${{F}}^{1,\a}_N\in L^1(\mathbf{T}^1_\a)$. We have
\begin{align}\label{eq1.4}
\nonumber\mathcal{F}_\a\big(e_{-\a}{{F}}^{1,\a}_N\big)(m)
=&\int_{\mathbf{T}^1_\a}e_{-\a}(x){{F}}^{1,\a}_N(x)K_\a(m,x)dx\\
%\nonumber=&A_\a\int_{\mathbf{T}^1_\a} \mathcal{F}_N(x)e^{-2\pi i mx\csc\a+\pi i m^2\cot\a}dx\\
%\nonumber=&A_\a\int_{\mathbf{T}^1_\a} |\csc \a|\sum_{j=-N}^N\Big(1-\frac{|j|}{N+1}\Big)e^{2\pi i j x\csc \a}e^{-2\pi i mx\csc\a+\pi i m^2\cot\a}dx\\
\nonumber=&A_\a|\csc \a|e_{\a}(m)\sum_{j=-N}^N\Big(1-\frac{|j|}{N+1}\Big)\int_{\mathbf{T}^1_\a}e_{\a}(m,x)e_{-\a}(j,x)dx\\
=&A_\a e_{\a}(m)\Big(1-\frac{|m|}{N+1}\Big),
\end{align}
where
\begin{align*}
|\csc \a|\sum_{j=-N}^N\Big(1-\frac{|j|}{N+1}\Big)\int_{\mathbf{T}^1_\a}e_{\a}(m,x)e_{-\a}(j,x)dx=0,\qquad m\neq j,
\end{align*}
\begin{align*}
|\csc \a|\sum_{j=-N}^N\Big(1-\frac{|j|}{N+1}\Big)\int_{\mathbf{T}^1_\a}e_{\a}(m,x)e_{-\a}(j,x)dx=1-\frac{|m|}{N+1},\qquad m=j.
\end{align*}
This completes the proof.
\end{proof}
\begin{definition}\label{def1.6}\,\
Let $N$ be a nonnegative integer. The function ${{F}}^{1,\a}_N$ on $\mathbf{T}^1_\a$ given by $(\ref{eq2.3})$ is called the fractional Fej\'{e}r kernel of order $\a$.
\end{definition}
The Fej\'{e}r kernel ${{F}}^{n}_N$ on the torus is defined as the product of the $1$-dimensional Fej\'{e}r kernels. Similarly, we define fractional Fej\'{e}r kernel ${{F}}^{n,\a}_N$ on $\mathbf{T}^n_\a$ as the product of the $1$-dimensional fractional Fej\'{e}r kernels, namely
\begin{align*}
{{F}}^{n,\a}_N(x_1,\ldots,x_n):=&\prod_{j=1}^n{{F}}^{1,\a}_N(x_j).
\end{align*}
\begin{remark}\label{rem1.3}\,\
For all $N\geq 0$, by $(\ref{eq2.3})$, we get
\begin{align*}
{{F}}^{n,\a}_N(x_1,\ldots,x_n)=& |\csc \a|^n\sum_{m\in \mathbf{Z}^n, |m_j|\leq N}\prod_{j=1}^n\Big(1-\frac{|m_j|}{N+1}\Big) e^{2\pi i (m\cdot x)\csc \a}\\
=&\frac{|\csc \a|^n}{(N+1)^n}\prod_{j=1}^n\Big(\frac{\sin\big((N+1)\pi x_j \csc\a\big)} {\sin(\pi x_j \csc\a)}\Big)^2.
\end{align*}
Thus ${{F}}^{n,\a}_N\geq 0$. Note that ${{F}}^{n,\a}_0(x)=|\csc \a|^n$ and  ${{F}}^{n,\a}_N(0)=|\csc \a|^n(N+1)^n$.
\end{remark}

\begin{proposition}\label{pro2.4}\,\
The family of fractional Fej\'{e}r kernels $\{{{F}}^{n,\a}_N\}_{N=0}^\infty$ is an approximate identity on $\mathbf{T}^n_\a$.
\end{proposition}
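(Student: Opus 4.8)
The plan is to verify the three defining properties of an approximate identity on $\mathbf{T}^n_\a$ for the family $\{\mathbf{F}^{n,\a}_N\}_{N=0}^\infty$: (i) a uniform $L^1$ bound $\int_{\mathbf{T}^n_\a}\mathbf{F}^{n,\a}_N(x)\,dx\le C$ for all $N$; (ii) the normalization $\int_{\mathbf{T}^n_\a}\mathbf{F}^{n,\a}_N(x)\,dx=1$; and (iii) the concentration property $\int_{\{x\in\mathbf{T}^n_\a:\,|x|\ge\delta\}}\mathbf{F}^{n,\a}_N(x)\,dx\to 0$ as $N\to\infty$ for every fixed $\delta>0$. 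Because the $n$-dimensional fractional Fej\'{e}r kernel factors as $\mathbf{F}^{n,\a}_N(x_1,\dots,x_n)=\prod_{j=1}^n\mathbf{F}^{1,\a}_N(x_j)$, it suffices to prove each property in dimension one and then take tensor products; for property (iii) one uses the standard argument that if each one-dimensional factor integrates to $1$ and concentrates near the origin, then so does the product.

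First I would establish (ii) in dimension one. By Proposition \ref{pro1.3}, the first series representation in \eqref{eq2.3} gives $\mathbf{F}^{1,\a}_N(x)=|\csc\a|\sum_{j=-N}^N\big(1-\tfrac{|j|}{N+1}\big)e^{2\pi i jx\csc\a}$, and integrating term by term over $\mathbf{T}^1_\a=[-|\sin\a|/2,|\sin\a|/2]$ only the $j=0$ term survives, yielding $\int_{\mathbf{T}^1_\a}\mathbf{F}^{1,\a}_N(x)\,dx=|\csc\a|\cdot|\sin\a|=1$. Alternatively this follows directly from the change of variables $x\mapsto x\csc\a$ applied to the classical identity $\int_{\mathbf{T}^1}\mathbf{F}^1_N=1$. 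Taking the product over the $n$ coordinates gives $\int_{\mathbf{T}^n_\a}\mathbf{F}^{n,\a}_N=1$, which simultaneously settles (i) with $C=1$ since $\mathbf{F}^{n,\a}_N\ge 0$ by Remark \ref{rem1.3}.

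The remaining work is property (iii), which is the main point. In one dimension, using the closed form $\mathbf{F}^{1,\a}_N(x)=\tfrac{|\csc\a|}{N+1}\big(\tfrac{\sin((N+1)\pi x\csc\a)}{\sin(\pi x\csc\a)}\big)^2$ from \eqref{eq2.3}, I would bound the numerator by $1$ and, for $\delta\le|x|\le|\sin\a|/2$, bound $|\sin(\pi x\csc\a)|$ below by a positive constant $c_\delta$ (this uses that $\pi x\csc\a$ ranges over a compact subset of $(-\pi,\pi)$ bounded away from $0$ and $\pm\pi$, so $\sin$ is bounded away from zero there). This gives $\mathbf{F}^{1,\a}_N(x)\le \tfrac{|\csc\a|}{(N+1)c_\delta^2}$ uniformly for $|x|\ge\delta$, hence $\int_{\{\delta\le|x|\le|\sin\a|/2\}}\mathbf{F}^{1,\a}_N(x)\,dx\le \tfrac{|\csc\a||\sin\a|}{(N+1)c_\delta^2}=\tfrac{1}{(N+1)c_\delta^2}\to 0$. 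For the $n$-dimensional statement, I would split $\{x\in\mathbf{T}^n_\a:|x|\ge\delta\}$ into the $n$ regions where $|x_j|\ge\delta/\sqrt n$ for at least one coordinate $j$; on each such region the corresponding one-dimensional factor is small in $L^1$ while the other factors integrate to $1$, so a union bound gives the decay. The main obstacle, though entirely routine, is getting the lower bound on $|\sin(\pi x\csc\a)|$ right: one must check that as $x$ ranges over $[-|\sin\a|/2,|\sin\a|/2]$ the argument $\pi x\csc\a$ stays in $[-\pi/2,\pi/2]$ (indeed $|\pi x\csc\a|\le \pi|\sin\a|/2\cdot|\csc\a| = \pi/2$), so $\sin(\pi x\csc\a)$ is genuinely bounded away from zero for $|x|\ge\delta$, and the endpoints cause no degeneracy. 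With these estimates assembled, all three axioms hold and the proof is complete.
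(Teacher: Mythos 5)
Your proposal is correct and follows essentially the same route as the paper: nonnegativity and the normalization $\int_{\mathbf{T}^1_\a}F^{1,\a}_N=1$ from the series representation, the closed form to get an $O(1/(N+1))$ tail bound on $\{|x|\ge\delta\}$ in one dimension, and the reduction of the $n$-dimensional tail to a single coordinate with $|x_j|\ge\delta/\sqrt n$. The only cosmetic difference is that the paper makes your constant $c_\delta$ explicit via the inequality $1\le|t|/|\sin t|\le\pi/2$ for $|t|\le\pi/2$, yielding the bound $n/(4\delta^2(N+1))$, whereas you invoke compactness; both are valid.
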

\begin{proof}
It is easy to see that
\begin{align*}
\|{F}^{n,\a}_N\|_{L^1(\mathbf{T}^n_\a)}=\int_{\mathbf{T}^n_\a}{F}^{n,\a}_N(x)dx
=\prod_{j=1}^N\int_{\mathbf{T}^1_\a}{F}^{1,\a}_N(x_j)dx_j=1.
\end{align*}
By noting that $1\leq {|t|}{/|\sin t|}\leq {\pi}{/2}$ when $|t|\leq {\pi}{/2}$, we obtain
\begin{align*}
{F}^{1,\a}_N(x)\leq& \frac{|\csc\a|}{N+1}\min \Big(\frac{(N+1)|\pi x \csc\a|}{|\sin(\pi x \csc\a)|},
\frac{1}{|\sin(\pi x \csc\a)|}\Big)^2\\
\leq&\frac{|\csc\a|}{N+1}\frac{\pi^2}{4}\min \Big(N+1,\frac{1}{|\pi x \csc\a|}\Big)^2,
\end{align*}
where $|x|\leq\frac{1}{2|\csc\a|}$. For $\delta>0$, we get
\begin{align*}
\int_{\delta\leq |x|\leq \frac{1}{2|\csc\a|}}{F}^{1,\a}_N(x)dx\leq \frac{|\csc\a|}{N+1}\frac{\pi^2}{4}
\int_{\delta\leq |x|\leq \frac{1}{2|\csc\a|}}\frac{1}{|\pi \delta \csc\a|^2}dx
\leq\frac{1}{4\delta^2}\frac{1}{N+1}\rightarrow 0
\end{align*}
as $N\rightarrow \infty$.

In higher dimensions, given $x =(x_1,\ldots,x_n)\in
[-{|\sin\a|}/{2},{|\sin\a|}/{2}]^n$ with $|x|\geq \delta,$ there exists a $j \in \{1,\ldots,n\}$ such that
$|x_j|\geq \delta/\sqrt n$ and thus
\begin{align}\label{eq1.5}
\int_{|x|\geq \delta}{F}^{n,\a}_N(x)dx
\leq& \int_{|x_j|\geq \frac{\delta}{\sqrt n}}{F}^{1,\a}_N(x_j)dx_j\prod_{k\neq j}\int_{\mathbf{T}^1_\a}{F}^{1,\a}_N(x_k)dx_k
\leq\frac{n}{4\delta^2}\frac{1}{N+1}\rightarrow 0,
\end{align}
which completes the proof.
\end{proof}

\begin{theorem}\label{th2.5}
Let $\a\in \mathbb{R}$ and $\a\neq \pi\mathbf{Z}$.\\
 (1) If $f\in e_{-\a}L^p(\mathbf{T}^n_\a)$, $1\leq p<\infty$, then
\begin{align*}
\lim_{N\rightarrow\infty}\Big\| f\overset{\alpha}{\ast}{{F}}^n_N - f\Big\| _{L^p(\mathbf{T}^n_\a)}=0.
\end{align*}
 (2) If $f\in e_{-\a}L^\infty(\mathbf{T}^n_\a)$ is uniformly continuous on a subset $K$ of $\mathbf{T}^n_\a$, then
\begin{align*}
\lim_{N\rightarrow\infty}\Big\|f\overset{\alpha}{\ast}{{F}}^n_N - f\Big\| _{L^\infty(K)}=0.
\end{align*}.
\end{theorem}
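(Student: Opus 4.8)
The plan is to reduce the fractional statement to the classical convergence theorem for approximate identities on $\mathbf{T}^n$ via the explicit relationship between the fractional convolution and the ordinary convolution. First I would unwind the definition of $f\overset{\alpha}{\ast}{{F}}^n_N$: writing $f=e_{-\alpha}g$ with $g\in L^p(\mathbf{T}^n_\a)$ (resp. $C(\mathbf{T}^n_\a)$), and using $(\delta^\alpha{{F}}^n_N)(x-y)={{F}}^n_N((x-y)\csc\a)$ together with the fact that $e_\alpha(y)f(y)=g(y)$, one finds
\begin{align*}
(f\overset{\alpha}{\ast}{{F}}^n_N)(x)=|\csc\a|^n e_{-\alpha}(x)\int_{\mathbf{T}^n_\a} g(y)\,{{F}}^n_N\big((x-y)\csc\a\big)\,dy.
\end{align*}
The change of variables $u=x\csc\a$, $v=y\csc\a$ (which maps $\mathbf{T}^n_\a$ onto $\mathbf{T}^n$ and has Jacobian $|\csc\a|^{-n}$, exactly cancelling the prefactor $|\csc\a|^n$) turns the integral into an honest convolution on $\mathbf{T}^n$: setting $G(u):=g(u\sin\a)$, we get $(f\overset{\alpha}{\ast}{{F}}^n_N)(x)=e_{-\alpha}(x)\,(G\ast {{F}}^n_N)(x\csc\a)$, where $G\ast{{F}}^n_N$ is the classical convolution and $\{{{F}}^n_N\}$ is the classical Fej\'er kernel, a bona fide approximate identity on $\mathbf{T}^n$.

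Next I would translate the error $f\overset{\alpha}{\ast}{{F}}^n_N-f$ through this identity. Since $f(x)=e_{-\alpha}(x)g(x)=e_{-\alpha}(x)G(x\csc\a)$, we have
\begin{align*}
\big(f\overset{\alpha}{\ast}{{F}}^n_N-f\big)(x)=e_{-\alpha}(x)\Big[(G\ast{{F}}^n_N)(x\csc\a)-G(x\csc\a)\Big].
\end{align*}
Because $|e_{-\alpha}(x)|=1$, taking $L^p(\mathbf{T}^n_\a)$ norms and undoing the substitution $u=x\csc\a$ (again the Jacobian converts between the two tori, contributing a fixed constant power of $|\sin\a|$ that is harmless since it is the same on both sides) yields
\begin{align*}
\big\|f\overset{\alpha}{\ast}{{F}}^n_N-f\big\|_{L^p(\mathbf{T}^n_\a)}=|\sin\a|^{n/p}\,\big\|G\ast{{F}}^n_N-G\big\|_{L^p(\mathbf{T}^n)}.
\end{align*}
For part (1), $g\in L^p(\mathbf{T}^n_\a)$ with $1\le p<\infty$ implies $G\in L^p(\mathbf{T}^n)$, so the classical theorem that convolution against an approximate identity converges in $L^p$ for $1\le p<\infty$ (which is exactly what Proposition \ref{pro2.4} sets us up to invoke, in its classical form on $\mathbf{T}^n$) gives the conclusion. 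For part (2), if $f$ is uniformly continuous on $K\subset\mathbf{T}^n_\a$ then $G$ is uniformly continuous on the image $K\csc\a\subset\mathbf{T}^n$, and the classical uniform-convergence statement for approximate identities on compacta gives $\|G\ast{{F}}^n_N-G\|_{L^\infty(K\csc\a)}\to0$, hence $\|f\overset{\alpha}{\ast}{{F}}^n_N-f\|_{L^\infty(K)}\to0$.

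The only genuinely delicate point — and the step I expect to require the most care — is verifying that the change of variables is performed consistently with the identification of opposite sides on both tori, i.e. that $u\mapsto u\sin\a$ is a well-defined measure-preserving (up to the constant $|\sin\a|^n$) bijection $\mathbf{T}^n\to\mathbf{T}^n_\a$ intertwining periodic functions, and that the "arithmetic mean of Dirichlet kernels" computation leading from $\sum_{|m|\le N}\mathcal{F}_\a(f)(m)K_{-\a}(m,x)$ to $f\overset{\alpha}{\ast}{{F}}^n_N$ (the display just before Proposition \ref{pro1.3}, extended from $D^n_N$ to $F^n_N$ by Ces\`aro averaging in each coordinate) is exactly the identity $(f\overset{\alpha}{\ast}{{F}}^n_N)(x)=e_{-\alpha}(x)(G\ast F^n_N)(x\csc\a)$ used above. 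Everything else is a direct appeal to the classical approximate-identity theorem on $\mathbf{T}^n$, whose hypotheses are met because $\{F^n_N\}$ is an approximate identity (Proposition \ref{pro2.4} in its classical incarnation, obtained by the same proof with $\a=\pi/2$). I would present the change-of-variables lemma once, cleanly, and then both parts (1) and (2) follow in a few lines each.
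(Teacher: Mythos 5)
Your proposal is correct, but it takes a genuinely different route from the paper. The paper works entirely on the fractional torus: it first shows (Proposition \ref{pro2.4}) that $\{F_N^{n,\a}\}$ is an approximate identity on $\mathbf{T}^n_\a$, writes
\begin{align*}
(f\overset{\alpha}{\ast}F_N^n)(x)-f(x)=e_{-\a}(x)\int_{\mathbf{T}^n_\a}\big((e_\a f)(x-t)-(e_\a f)(x)\big)F_N^{n,\a}(t)\,dt,
\end{align*}
and then reruns the standard approximate-identity argument from scratch (split into $|t|\le\delta$ and $|t|>\delta$, use $L^p$-continuity of translation and Minkowski's integral inequality for part (1), uniform continuity for part (2)). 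You instead conjugate by the dilation $x\mapsto x\csc\a$ and the unimodular chirp $e_{-\a}$ to land on the classical torus, obtaining $(f\overset{\alpha}{\ast}F_N^n)(x)=e_{-\a}(x)(G\ast F_N^n)(x\csc\a)$ with $G(v)=g(v\sin\a)$, and then quote the classical approximate-identity theorem on $\mathbf{T}^n$. Your reduction is sound: the Jacobian bookkeeping is as you describe, $G$ is $1$-periodic because $g$ is $|\sin\a|$-periodic, the chirp drops out of all norms since $|e_{-\a}|=1$, and for part (2) the transfer of uniform continuity from $f$ to $g=e_\a f$ (hence to $G$) is harmless because $e_\a$ is unimodular and uniformly continuous on the compact set $\mathbf{T}^n_\a$ and $f$ is bounded. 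What your approach buys is brevity and conceptual clarity — it makes explicit that the whole fractional setup is a dilate-and-modulate of the classical one, so nothing needs reproving. What the paper's approach buys is self-containedness: the direct estimate \eqref{eq1.5} on $\int_{|t|\ge\delta}F_N^{n,\a}$ established in Proposition \ref{pro2.4} is reused later (e.g.\ in the pointwise convergence results of Section \ref{section5}), so the authors need it anyway. The one step you flagged as delicate — that the Ces\`aro averaging of the partial sums really produces $f\overset{\alpha}{\ast}F_N^n$ — is not actually needed for this theorem (the statement is about the convolution itself, not about the Fej\'er means of the series); that identification is recorded separately in Definition \ref{def3.1}.
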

\begin{proof}
We first deal with the case $1\leq p<\infty$. It is easy to check that
\begin{align*}
( f\overset{\alpha}{\ast}{{F}}^n_N)(x)-f(x)
%&=e^{-\pi ix^{2}\cot\alpha}\int_{\mathbf{T}^n_\a}f(t){\mathcal{F}}^n_N(x-t)\mathrm{d}t
%-\int_{\mathbf{T}^n_\a}{\mathcal{F}}^n_N(t) \mathcal{M}_{-\a} f(x)\mathrm{d}t\\
%&=e^{-\pi ix^{2}\cot\alpha}\int_{\mathbf{T}^n_\a}f(t){\mathcal{F}}^n_N(x-t)\mathrm{d}t
%-\int_{\mathbf{T}^n_\a} e^{-\pi ix^{2}\cot\alpha} f(x){\mathcal{F}}^n_N(t)\mathrm{d}t\\
=&e_{-\a}(x)\int_{\mathbf{T}^n_\a} e_{\alpha}f(t){{F}}^{n,\a}_N(x-t)\mathrm{d}t-\int_{\mathbf{T}^n_\a}f(x){{F}}^{n,\a}_N(t)\mathrm{d}t\\
%=&e_{-\a}(x^2)\int_{{\mathbf{T}^n_\a}} \mathcal{M}_\a { f(x-t)}{{F}}^{n,\a}_N(t)\mathrm{d}t-e_{-\a}(x^2)\int_{\mathbf{T}^n_\a}\mathcal{M}_\a f(x){{F}}^{n,\a}_N(t)\mathrm{d}t\\
=&e_{-\a}(x)\int_{\mathbf{T}^n_\a}\big((e_{\alpha}f)(x-t)-(e_{\alpha}f)(x)\big){{F}}^{n,\a}_N(t)
\mathrm{d}t.
\end{align*}
Now approximate a given $e_{\alpha}f\in L^p(\mathbf{T}^n_\a)$ by a continuous function with compact support to deduce that
\begin{align*}
\int_{\mathbf{T}^n_\a}\big|(e_{\alpha}f)(x-h)-(e_{\alpha}f)(x)\big|^pdx\rightarrow0 \quad as \quad  h\rightarrow0.
\end{align*}
For given $\varepsilon >0$, there exists $\delta>0$ such that
\begin{align}\label{eq2.2}
\int_{\mathbf{T}^n_\a}\big|(e_{\alpha}f)(x-h)-(e_{\alpha}f)(x)\big|^pdx<\frac{\varepsilon^p}{2^p}, \quad h\in [-\delta,\delta].
\end{align}
Then
\begin{align*}
( f\overset{\alpha}{\ast}{{F}}^n_N)(x)-f(x)
&=e_{\alpha}(x)\int_{|t|\leq\d}\big((e_{\alpha}f)(x-t)-(e_{\alpha}f)(x)\big){{F}}^{n,\a}_N(t)
\mathrm{d}t\\
&\qquad+e_{\alpha}(x)\int_{|t|>\d}\big((e_{\alpha}f)(x-t)-(e_{\alpha}f)(x)\big){{F}}^{n,\a}_N(t)dt\\
&=:I_1+I_2.
\end{align*}
By Minkowski's integral inequality and (\ref{eq1.5}), we obtain
\begin{align}\label{eq2.7}
\|I_2\| _{p}
&\leq\int_{|t|>\d} \Big(\int_{\mathbf{T}^n_\a}\big|\big(e_{\alpha}f\big)(x-t)
-\big(e_{\alpha}f\big)(x)\big|^pdx\Big)^{1/p}{{F}}^{n,\a}_N(t)dt\\
\nonumber&\leq2\|f\|_p\int_{|t|>\d}{{F}}^{n,\a}_N(t)dt\rightarrow0 \qquad as\quad N\rightarrow\infty.
\end{align}
In addition, by $(\ref{eq2.2})$, we get
\begin{align*}
\|I_1\| _{p} &\leq\int_{|t|\leq\d} \Big(\int_{\mathbf{T}^n_\a}\big|\big(e_{\alpha}f\big)(x-t)-\big(e_{\alpha}f\big)(x)\big|^pdx\Big)^{1/p}{{F}}^{n,\a}_N(t)dt< \frac{\varepsilon}{2}.
\end{align*}
This together with (\ref{eq2.7}) implies the required conclusion.

Now, we turn to conclusion $(2)$. Let $e_{\alpha}f$ be a bounded function on $\mathbf{T}^n_\a$ that is
uniformly continuous on K. Given $\v> 0$, there exists a neighborhood $V$ of $0$ such that
\begin{align*}
\big|(e_{\alpha}f)(x-h)-(e_{\alpha}f)(x)\big|<\frac{\v}{2} {\rm{\;for\; all\;}} h\in V\; {\rm{and}} \; x\in K.
\end{align*}
Applying this along with $(\ref{eq1.5})$, we get that as $N\to\infty$
\begin{align*}
&\sup_{x\in K}\big| ( f\overset{\alpha}{\ast}{{F}}^{n,\a}_N)(x)-f(x)\big|\\
&\leq \int_{|t|\leq\d}\sup_{x\in K} \big|(e_{\alpha}f)(x-t)-(e_{\alpha}f)(x)\big|{{F}}^{n,\a}_N(t)
\mathrm{d}t\\
&\qquad+\int_{|t|>\d}\sup_{x\in K} \big|(e_{\alpha}f)(x-t)-(e_{\alpha}f)(x)\big|{{F}}^{n,\a}_N(t)dt\\
&\leq \frac{\v}{2},
\end{align*}
which completes the proof.
\end{proof}
\begin{proposition}\label{pro1.5}\,\
The set of trigonometric polynomials of order $\a$ is dense in $e_{-\alpha}L^p(\mathbf{T}^n_\a)$ for $1 \leq p <\infty$.
\begin{proof}
For $1\leq p <\infty$, we claim that $f\overset{\alpha}{\ast}{{F}}^n_N$ is also a trigonometric polynomial of order $\a$.
In fact,
\begin{align*}
( f\overset{\alpha}{\ast}{{F}}^n_N)(x)
=&\underset{|m_j|\leq N}{\sum_{ m\in \mathbf{Z}^n}}\prod_{j=1}^n
\Big(1-\frac{|m_j|}{N+1}\Big)\mathcal{F}_\a (f)(m)K_{-\a}(m,x).
\end{align*}
It follows from Theorem \ref{th2.5} that $f\overset{\alpha}{\ast}{\mathcal{F}}^n_N$ converges to $f$ as $N\to\infty$. This finishes the proof.
\end{proof}
\end{proposition}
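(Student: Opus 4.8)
The plan is to exhibit, for every $f\in e_{-\a}L^p(\mathbf{T}^n_\a)$ with $1\le p<\infty$, an approximating sequence of trigonometric polynomials of order $\a$; the natural candidate is the sequence of fractional Fej\'er means $f\overset{\alpha}{\ast}{{F}}^n_N$. Thus the argument splits into two parts: first, show that each $f\overset{\alpha}{\ast}{{F}}^n_N$ is a trigonometric polynomial of order $\a$ in the sense of Definition \ref{def1.2}; second, quote Theorem \ref{th2.5}(1) to get $\|f\overset{\alpha}{\ast}{{F}}^n_N-f\|_{L^p(\mathbf{T}^n_\a)}\to 0$ as $N\to\infty$. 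Since $f=e_{-\a}g$ with $g=e_{\a}f\in L^p(\mathbf{T}^n_\a)\subset L^1(\mathbf{T}^n_\a)$, all the objects below are well defined.

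For the first part, I would combine the definition of the fractional convolution with the relation ${{F}}^{n,\a}_N=|\csc\a|^n({\delta^\a}{{F}}^n_N)$, which reduces $(f\overset{\alpha}{\ast}{{F}}^n_N)(x)$ to $e_{-\a}(x)\int_{\mathbf{T}^n_\a}e_{\a}(y)f(y){{F}}^{n,\a}_N(x-y)\,dy$. Then I would substitute the (finite) Fourier-side expansion of ${{F}}^{n,\a}_N$ from Remark \ref{rem1.3}, interchange the finite sum with the $y$-integral, and regroup the chirp factors $e_{\a}(x)$, $e_{\a}(m)$, $e_{\a}(m,y)$ together with $A_\a^n$ so that the inner integral is exactly $\mathcal{F}_\a(f)(m)$ while the surviving factors in $x$ reassemble $K_{-\a}(m,x)$. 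This should give
\[
( f\overset{\alpha}{\ast}{{F}}^n_N)(x)=\underset{|m_j|\leq N}{\sum_{ m\in \mathbf{Z}^n}}\prod_{j=1}^n\Big(1-\frac{|m_j|}{N+1}\Big)\mathcal{F}_\a (f)(m)K_{-\a}(m,x),
\]
a finite linear combination of the functions $K_{-\a}(m,\cdot)$ with $|m_j|\le N$; by Definition \ref{def1.2} this is a trigonometric polynomial of order $\a$ of degree at most $nN$, with coefficients $c_{m,\a}=\prod_{j}\bigl(1-|m_j|/(N+1)\bigr)\mathcal{F}_\a(f)(m)$.

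With this in hand the proof closes quickly: given $\v>0$, Theorem \ref{th2.5}(1) yields $N$ with $\|f\overset{\alpha}{\ast}{{F}}^n_N-f\|_{L^p(\mathbf{T}^n_\a)}<\v$, and $f\overset{\alpha}{\ast}{{F}}^n_N$ is a trigonometric polynomial of order $\a$, which proves density. I expect the only real work to be the bookkeeping in the displayed computation: one must track the three chirp factors hidden inside $K_\a$ and $K_{-\a}$, match the $|\csc\a|^n$ normalizations and the dilation $\delta^\a$ appearing in the definitions of the fractional convolution, the fractional Fej\'er kernel and the fractional Fourier coefficient, use $A_\a^nA_{-\a}^n=|\csc\a|^n$, and invoke the orthogonality relation asserting that $|\csc\a|^n\int_{\mathbf{T}^n_\a}e_{\a}(m,y)e_{-\a}(j,y)\,dy$ equals $1$ when $m=j$ and $0$ otherwise (the $n$-dimensional form of what was used in Proposition \ref{pro1.3}). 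None of this is deep, but it is where a sign or normalization slip would most easily hide.
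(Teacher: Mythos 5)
Your proposal is correct and follows essentially the same route as the paper: identify the fractional Fej\'er means $f\overset{\alpha}{\ast}{{F}}^n_N$ as trigonometric polynomials of order $\a$ via the displayed finite expansion in the $K_{-\a}(m,\cdot)$, then invoke Theorem \ref{th2.5}(1) for the $L^p$ convergence. The paper simply asserts the expansion without the chirp/normalization bookkeeping you outline, but that bookkeeping is exactly the verification implicitly required, so there is nothing to add.
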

\begin{corollary}(Weierstrass approximation theorem for trigonometric polynomials of order $\a$) Every continuous function on $\mathbf{T}^n_\a$ is a uniform limit of trigonometric polynomials with order $\a$.
\end{corollary}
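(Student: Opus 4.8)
The plan is to read the corollary off from the $L^\infty$ part of Theorem~\ref{th2.5} once one observes that the fractional Fej\'er means $f\overset{\alpha}{\ast}{{F}}^n_N$ are themselves trigonometric polynomials of order $\a$. Let $f$ be continuous on $\mathbf{T}^n_\a$. Since $\mathbf{T}^n_\a$ is compact and $|e_\alpha(x)|=1$, the function $g:=e_{\alpha}f$ is bounded, so $f=e_{-\alpha}g\in e_{-\alpha}L^\infty(\mathbf{T}^n_\a)$; moreover $g$ is continuous on the compact cube $[-|\sin\a|/2,|\sin\a|/2]^n$ and hence uniformly continuous there. Thus Theorem~\ref{th2.5}(2) applies with $K=\mathbf{T}^n_\a$ and yields
\begin{align*}
\lim_{N\to\infty}\big\|f\overset{\alpha}{\ast}{{F}}^n_N-f\big\|_{L^\infty(\mathbf{T}^n_\a)}=0.
\end{align*}

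Next I would invoke the identity obtained in the proof of Proposition~\ref{pro1.5}, namely
\begin{align*}
(f\overset{\alpha}{\ast}{{F}}^n_N)(x)=\underset{|m_j|\leq N}{\sum_{m\in\mathbf{Z}^n}}\prod_{j=1}^n\Big(1-\frac{|m_j|}{N+1}\Big)\mathcal{F}_\a(f)(m)K_{-\a}(m,x),
\end{align*}
which displays $f\overset{\alpha}{\ast}{{F}}^n_N$ as a trigonometric polynomial of order $\a$ of degree at most $nN$ in the sense of Definition~\ref{def1.2}. Combining the two displays: given $\v>0$, choose $N$ so large that $\big\|f\overset{\alpha}{\ast}{{F}}^n_N-f\big\|_{L^\infty(\mathbf{T}^n_\a)}<\v$; then $P_\a:=f\overset{\alpha}{\ast}{{F}}^n_N$ is an order-$\a$ trigonometric polynomial with $\sup_{x\in\mathbf{T}^n_\a}|P_\a(x)-f(x)|<\v$, which is precisely the assertion.

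There is no essential obstacle here; the only point demanding a little care --- and the reason one cannot simply transcribe the classical Fej\'er argument --- is the verification of the hypotheses of Theorem~\ref{th2.5}(2). Because the chirp $e_\alpha(x)=e^{\pi i|x|^2\cot\a}$ is not $|\sin\a|$-periodic, $e_{\alpha}f$ is not literally a function on $\mathbf{T}^n_\a$, so one must phrase its boundedness and uniform continuity on the closed fundamental cube, which is compact; this is exactly the formulation of uniform continuity that the proof of Theorem~\ref{th2.5}(2) uses. Granting that, the corollary is just the $L^\infty$ endpoint of fractional Fej\'er approximation, re-expressed through the fact that fractional Fej\'er means are order-$\a$ trigonometric polynomials.
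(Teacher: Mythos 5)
Your proposal is correct and follows essentially the same route as the paper: apply Theorem~\ref{th2.5}(2) with $K=\mathbf{T}^n_\a$ (using compactness to get uniform continuity) and observe via Proposition~\ref{pro1.5} that the fractional Fej\'er means are trigonometric polynomials of order $\a$. Your extra remarks on verifying the hypotheses (boundedness and uniform continuity of $e_\a f$ on the fundamental cube) are a careful elaboration of what the paper states more tersely, not a different argument.
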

\begin{proof}
Note that $\mathbf{T}^n_\a$ is a compact set. By Theorem \ref{th2.5} (2), we know that $f\overset{\alpha}{\ast}{\mathcal{F}}^n_N$ converges uniformly to $f$  as $N\to\infty$. Since $f\overset{\alpha}{\ast}{\mathcal{F}}^n_N$ is a trigonometric polynomial of order $\a$, we conclude that every continuous function on $\mathbf{T}^n_\a$ can be uniformly approximated by trigonometric polynomials of order $\a$.
\end{proof}

\section{Reproduction of Functions from Their Fractional Fourier Coefficients }
In this section, we establish the fractional Fourier inversion on $e_{-\a}L^1(\mathbf{T}^n_\a)$ and study basic properties of $e_{-\a}L^2(\mathbf{T}^n_\a)$. Moreover, to explore the connections between fractional Fourier analysis on $\mathbf{T}^n_\a$ and fractional Fourier analysis on $\rn$, we give the fractional Poisson summation formula.
\subsection{Fractional Fourier inversion }\label{section3.1}
We now define the partial sums of fractional Fourier series.
\begin{definition}\label{def3.1}\,\
Let $N\in \mathbb{N}$, $\a\in \mathbb{R}$ and $\a\neq \pi\mathbf{Z}$. The expressions
\begin{align}\label{eq2.9}
(f\overset{\alpha}{\ast}{F}_N^n)(x)
=&\underset{|m_j|\leq N}{\sum_{ m\in \mathbf{Z}^n}}\prod_{j=1}^n
\Big(1-\frac{|m_j|}{N+1}\Big)\mathcal{F}_\a(f)(m)K_{-\a}(m,x)
\end{align}
are called the fractional Fej\'{e}r means (or fractional Ces\`{a}ro means) of $f$.
\end{definition}
In the following propositions, we obtain that the fractional Fourier series uniquely determine the function.
\begin{proposition}\label{pro2.1}\,\
If $ f, g \in e_{-\a}L^1(\mathbf{T}^n_\a)$ satisfy $\mathcal{F}_\a (f)(m)=\mathcal{F}_\a (g)(m)$ for all $m\in \mathbf{Z}^n$, then $f=g$ a.e.
\end{proposition}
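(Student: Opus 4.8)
The plan is to reduce the statement to the fact, already established in Theorem~\ref{th2.5}, that the fractional Fej\'{e}r means of an $e_{-\a}L^1(\mathbf{T}^n_\a)$ function converge to it in $L^1(\mathbf{T}^n_\a)$. First I would set $h=f-g$, which by linearity (Proposition~\ref{pro1.2}~(1), (2)) lies in $e_{-\a}L^1(\mathbf{T}^n_\a)$ and satisfies $\mathcal{F}_\a(h)(m)=0$ for every $m\in\mathbf{Z}^n$. The key observation is that, by the explicit formula~(\ref{eq2.9}) for the fractional Fej\'{e}r means, the quantity $h\overset{\alpha}{\ast}{F}_N^n$ is a finite linear combination of the functions $K_{-\a}(m,x)$ with coefficients that are multiples of $\mathcal{F}_\a(h)(m)$; hence $h\overset{\alpha}{\ast}{F}_N^n\equiv 0$ for every $N$.

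Next I would invoke Theorem~\ref{th2.5}~(1) with $p=1$: since $h\in e_{-\a}L^1(\mathbf{T}^n_\a)$, we have $\|h\overset{\alpha}{\ast}{F}_N^n-h\|_{L^1(\mathbf{T}^n_\a)}\to 0$ as $N\to\infty$. But the left-hand side equals $\|h\|_{L^1(\mathbf{T}^n_\a)}$ for all $N$ because $h\overset{\alpha}{\ast}{F}_N^n=0$. Therefore $\|h\|_{L^1(\mathbf{T}^n_\a)}=0$, which forces $h=0$ a.e., i.e. $f=g$ a.e. This completes the argument.

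There is no serious obstacle here; the only point requiring a modicum of care is verifying that $\mathcal{F}_\a(h)(m)=0$ for all $m$ genuinely makes every fractional Fej\'{e}r mean vanish identically, which is immediate from~(\ref{eq2.9}) since each summand carries the factor $\mathcal{F}_\a(h)(m)$. One should also note in passing that the fractional convolution $h\overset{\alpha}{\ast}{F}_N^n$ is well defined, as $h\in e_{-\a}L^1(\mathbf{T}^n_\a)$ and $F_N^n\in L^1(\mathbf{T}^n)$, matching the hypotheses of Definition~\ref{def1}. If one prefers to avoid the Fej\'{e}r-kernel machinery, an alternative is to observe directly that $\mathcal{F}_\a(h)\equiv 0$ together with the density of trigonometric polynomials of order $\a$ in $e_{-\a}L^\infty$-duality pairs forces $\int_{\mathbf{T}^n_\a} h(x)\,\overline{P_\a(x)}\,dx=0$ for all such polynomials and hence $h=0$; but the Fej\'{e}r-mean route above is the most economical given what has been proved.
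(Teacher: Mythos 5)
Your proposal is correct and follows essentially the same route as the paper: reduce to $h=f-g$ with vanishing fractional Fourier coefficients, observe via the formula~(\ref{eq2.9}) that all fractional Fej\'{e}r means of $h$ vanish, and then invoke Theorem~\ref{th2.5} with $p=1$ to conclude $\|h\|_{L^1(\mathbf{T}^n_\a)}=0$. No discrepancies to report.
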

\begin{proof}
By linearity of $\mathcal{F}_\a$, we can set $g=0$. If $\mathcal{F}_\a(f)(m)=0$ for all  $m\in \mathbf{Z}^n$, we know from (\ref{eq2.9}) that $f\overset{\alpha}{\ast}{F}_N^n =0$ for all $N\in \mathbf{Z}^+$. By Theorem \ref{th2.5}, we have
\begin{align*}
\big\|f\overset{\alpha}{\ast}{{F}}^n_N-f\big\|_{L^1(\mathbf{T}^n_\a)}\rightarrow 0
\end{align*}
as $N\rightarrow \infty$. Then
\begin{align*}
\|f\|_{L^1(\mathbf{T}^n_\a)}\leq\big\Vert f\overset{\alpha}{\ast}{{F}}^n_N - f\big\Vert _{L^1(\mathbf{T}^n_\a)}+\big\| f\overset{\alpha}{\ast}{{F}}^n_N\big\|_{L^1(\mathbf{T}^n_\a)}\rightarrow 0,
\end{align*}
from which we conclude that $f = 0$ a.e.
\end{proof}
Whether the partial sums of the fractional Fourier series converge to the function as $N\rightarrow \infty$? We give a firm answer to this question.
\begin{corollary}\label{cor3.3}\,\
(Fractional Fourier inversion) Suppose $ f\in e_{-\a}L^1(\mathbf{T}^n_\a)$ and
$$\sum_{ m\in \mathbf{Z}^n}|\mathcal{F}_\a (f)(m)|<\infty.$$
Then
\begin{align}\label{eq2.5}
f(x)=\sum_{ m\in \mathbf{Z}^n}\mathcal{F}_\a(f)(m) K_{-\a}(m,x) \quad a.e.,
\end{align}
and therefore $f$ is almost everywhere equal to a continuous function.
\end{corollary}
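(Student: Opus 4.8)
The plan is to deduce the inversion formula from the summability result in Theorem \ref{th2.5} together with the uniqueness statement in Proposition \ref{pro2.1}, by showing that the fractional Fej\'{e}r means converge to the right-hand side of (\ref{eq2.5}) whenever the fractional Fourier coefficients are absolutely summable. First I would set
\[
h(x):=\sum_{m\in\mathbf{Z}^n}\mathcal{F}_\a(f)(m)\,K_{-\a}(m,x),
\]
and observe that since $|K_{-\a}(m,x)|=|\csc\a|^{n/2}$ is bounded uniformly in $m$ and $x$, the hypothesis $\sum_m|\mathcal{F}_\a(f)(m)|<\infty$ forces the series to converge absolutely and uniformly on $\mathbf{T}^n_\a$. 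Hence $h$ is a uniform limit of continuous functions, so $h$ is continuous on $\mathbf{T}^n_\a$, and in particular $h\in e_{-\a}L^1(\mathbf{T}^n_\a)$.

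Next I would compute the fractional Fourier coefficients of $h$. Because the defining series converges uniformly, it may be integrated term by term against $K_\a(k,\cdot)$; using the orthogonality relation already exploited in the proof of Proposition \ref{pro1.3} — namely that $\int_{\mathbf{T}^n_\a}K_{-\a}(m,x)K_\a(k,x)\,dx$ equals $1$ when $m=k$ and $0$ otherwise (equivalently $\mathcal{F}_\a(K_{-\a}(m,\cdot))(k)=\delta_{mk}$, which is the content of Definition \ref{def1.2}) — one gets $\mathcal{F}_\a(h)(k)=\mathcal{F}_\a(f)(k)$ for every $k\in\mathbf{Z}^n$. Now both $f$ and $h$ lie in $e_{-\a}L^1(\mathbf{T}^n_\a)$ and have identical fractional Fourier coefficients, so Proposition \ref{pro2.1} gives $f=h$ almost everywhere. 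This proves (\ref{eq2.5}), and since $h$ is continuous, $f$ agrees a.e.\ with a continuous function.

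An alternative route, which avoids invoking term-by-term integration, is to note directly from (\ref{eq2.9}) that the fractional Fej\'{e}r means satisfy
\[
(f\overset{\alpha}{\ast}F_N^n)(x)=\underset{|m_j|\le N}{\sum_{m\in\mathbf{Z}^n}}\prod_{j=1}^n\Big(1-\tfrac{|m_j|}{N+1}\Big)\mathcal{F}_\a(f)(m)K_{-\a}(m,x),
\]
and that the Ces\`{a}ro weights $\prod_j(1-|m_j|/(N+1))$ are bounded by $1$ and tend to $1$ as $N\to\infty$ for each fixed $m$; by dominated convergence for series (the dominating summable majorant being $|\csc\a|^{n/2}|\mathcal{F}_\a(f)(m)|$) one obtains $(f\overset{\alpha}{\ast}F_N^n)(x)\to h(x)$ for every $x\in\mathbf{T}^n_\a$, in fact uniformly. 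On the other hand Theorem \ref{th2.5}(1) with $p=1$ gives $f\overset{\alpha}{\ast}F_N^n\to f$ in $L^1(\mathbf{T}^n_\a)$, hence along a subsequence almost everywhere. Matching the two limits yields $f=h$ a.e.

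I do not expect any serious obstacle here: the statement is essentially a soft corollary of the two preceding results. The only point requiring a modicum of care is the justification of interchanging summation with integration (or with the limit $N\to\infty$) in computing $\mathcal{F}_\a(h)$, and this is immediate from the uniform convergence guaranteed by absolute summability of the coefficients together with the uniform bound $|K_{\pm\a}(m,x)|=|\csc\a|^{n/2}$. I would present the first route (uniqueness plus term-by-term integration) as the main argument, since it is shortest, and perhaps remark on the Fej\'{e}r-mean route as an aside.
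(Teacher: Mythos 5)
Your main argument is essentially identical to the paper's proof: the authors likewise define $G(x)=\sum_{j}\mathcal{F}_\a(f)(j)K_{-\a}(j,x)$, verify the orthogonality relations $\int_{\mathbf{T}^n_\a}K_{-\a}(m,x)K_\a(k,x)\,dx=\delta_{mk}$, integrate term by term to get $\mathcal{F}_\a(G)=\mathcal{F}_\a(f)$, and conclude via Proposition \ref{pro2.1}. Your proposal is correct, and your explicit remarks on uniform convergence (from $|K_{-\a}(m,x)|=|\csc\a|^{n/2}$) and the alternative Fej\'er-mean route are sound additions but not departures from the paper's approach.
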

\begin{proof}
%It is easy to see
%\begin{align*}
%K_{-\a}(m,x)=A_{-\a}^ne^{-\pi i m^2\cot \a+2\pi im\cdot x\csc \a-\pi ix^2\cot \a}
%\end{align*}
%\begin{align*}
%K_{\a}(n,x)=A_{\a}^ne^{\pi i n^2\cot \a-2\pi i n\cdot x\csc \a+\pi ix^2\cot \a}
%\end{align*}
For $m\neq n$, it is easy to see that
\begin{align*}
K_{-\a}(m,x) K_{\a}(n,x)
=|\csc\a|^ne_{-\a}(m)e_{\a}(n)e_{\a}(n-m,x).
\end{align*}
This implies
\begin{align*}
\int_{\mathbf{T}^n_\a}K_{-\a}(m,x) K_{\a}(n,x)dx
%=&|\csc\a|e^{\pi i (n^2-m^2)\cot \a}\int_{\mathbf{T}^n_\a}e^{2\pi i (m-n)\cdot x\csc \a}dx
=0,\quad m\neq n.
\end{align*}
Meanwhile
%\begin{align*}
%K_{-\a}(m,x) K_{\a}(m,x)=|\csc\a|^n
%\end{align*}
%and
\begin{align*}
\int_{\mathbf{T}^n_\a}K_{-\a}(m,x) K_{\a}(m,x)dx
=&|\csc\a|^n\int_{\mathbf{T}^n_\a}dx=1.
\end{align*}
Set
$$G(x):=\sum_{ j\in \mathbf{Z}^n}\mathcal{F}_\a(f)(j) K_{-\a}(j,x).$$
It is obvious that $G\in e_{-\a}L^1(\mathbf{T}^n_\a)$. Consequently,
\begin{align*}
\mathcal{F}_\a(G)(m)
=&\int_{\mathbf{T}^n_\a}\Big(\sum_{ j\in \mathbf{Z}^n}\mathcal{F}_\a(f)(j) K_{-\a}(j,x)\Big)K_{\a}(m,x)dx=\mathcal{F}_\a (f)(m).
\end{align*}
By Proposition \ref{pro2.1}, we have
\begin{align*}
f(x)=\sum_{ m\in \mathbf{Z}^n}\mathcal{F}_\a(f)(m) K_{-\a}(m,x) \quad a.e.
\end{align*}
This completes the proof.
\end{proof}

\subsection{Fractional Fourier series of square summable functions}\label{section3.2}
Now consider the space $e_{-\a}L^2(\mathbf{T}^n_\a)$ with inner product
\begin{align*}
\langle f|g  \rangle=\int_{\mathbf{T}^n_\a}f(t)\overline{g(t)}dt.
\end{align*}
It is easy to check that
\[\int_{\mathbf{T}^n_\a}K_{-\a}(m,x) \overline{K_{-\a}(n,x)}dx=\left\{\begin{array}{ll}
1,&when\; m=n,\\
0,\,&when \; m\neq n.
\end{array}\right.\]
This implies that the sequence $\{K_{-\a}(m,\cdot)\}$ is orthonormal. Meanwhile, for all $f\in e_{-\a}L^2(\mathbf{T}^n_\a)$, we have
\begin{align*}
\langle f|K_{-\a}(m,\cdot)\rangle
=&\int_{\mathbf{T}^n_\a}f(y)\overline{K_{-\a}(m,y)}dy=\mathcal{F}_\a(f)(m).
\end{align*}
If  $\langle f|K_{-\a}(m,\cdot)\rangle=0$ for all $m\in \mathbf{Z}^n$, we know from Proposition \ref{pro2.1} that $f=0$ a.e. Therefore, the completeness of the sequence $\{K_{-\a}(m,\cdot)\}$ holds.
\begin{proposition}\label{pro3.5}\,\
The following are valid for {$f,g \in e_{-\a}L^2(\mathbf{T}^n_\a)$} and\\
(1) (Plancherel's identity)
\begin{align*}
\|f\|_2^2=\sum_{m\in \mathbf{Z}^n}|\mathcal{F}_\a(f)(m)|^2.
\end{align*}
(2) The function $f(x)$ is a.e. equal to the $e_{-\a}L^2(\mathbf{T}^n_\a)$ limit of the sequence
\begin{align*}
\sum_{|m|\leq M }\mathcal{F}_\a (f)(m)K_{-\a}(m,x).
\end{align*}
(3) (Parseval's relation)
\begin{align*}
\int_{\mathbf{T}^n_\a}f(t)\overline{g(t)}dt=\sum_{m\in \mathbf{Z}^n}\mathcal{F}_\a(f)(m)\overline{\mathcal{F}_\a(g)(m)}.
\end{align*}
(4) The map $f\mapsto\big\{\mathcal{F}_\a(f)(m)\big\}$ is an isometry from $e_{-\a}L^2(\mathbf{T}^n_\a)$ onto $l^2$.\\
(5) For all $k\in \mathbf{Z}^n$, we have
\begin{align*}
\mathcal{F}_\a\big[e_{\a}(\cdot)fg\big](m)={A_{\a}^n}e_{\a}^2(m)\sum_{j\in \mathbf{Z}^n}\mathcal{F}_\a (f)(j)e^{-2\pi i(j\cdot m)\cot\a}{\mathcal{F}_{-\a}(e_{\a}^2(\cdot)g)(j-m)}.
\end{align*}
\end{proposition}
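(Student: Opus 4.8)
The plan is to prove Proposition~\ref{pro3.5} by exploiting that $\{K_{-\a}(m,\cdot)\}_{m\in\mathbf{Z}^n}$ is a \emph{complete orthonormal system} in the Hilbert space $e_{-\a}L^2(\mathbf{T}^n_\a)$, which has already been established in the paragraph preceding the statement (orthonormality by the displayed integral computation, completeness via Proposition~\ref{pro2.1}). Once this is in hand, parts (1)--(4) are the standard abstract Hilbert-space facts about orthonormal bases, transported through the identity $\langle f\,|\,K_{-\a}(m,\cdot)\rangle=\mathcal{F}_\a(f)(m)$ that was also recorded just above. Concretely: for (2), I would invoke that the partial sums $S_M f=\sum_{|m|\le M}\mathcal{F}_\a(f)(m)K_{-\a}(m,\cdot)$ are the orthogonal projections of $f$ onto the span of finitely many basis elements, so $\|f-S_M f\|_2\to 0$ by completeness; for (1), Bessel's inequality plus this convergence gives $\|f\|_2^2=\sum_m|\mathcal{F}_\a(f)(m)|^2$; for (3), polarize (1), or equivalently expand $\langle f|g\rangle=\langle\sum_m \mathcal{F}_\a(f)(m)K_{-\a}(m,\cdot)\,\big|\,\sum_j \mathcal{F}_\a(g)(j)K_{-\a}(j,\cdot)\rangle$ and use orthonormality; for (4), injectivity is Proposition~\ref{pro2.1}, norm preservation is (1), and surjectivity onto $\ell^2$ follows because for any $(c_m)\in\ell^2$ the series $\sum_m c_m K_{-\a}(m,\cdot)$ converges in $e_{-\a}L^2(\mathbf{T}^n_\a)$ (Cauchy by orthonormality and $\ell^2$-summability) to a function with the prescribed coefficients.

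For part (5) I would argue directly by computing the left-hand side from the definition of $\mathcal{F}_\a$ and then inserting the $L^2$-expansion of $f$. Start from
\begin{align*}
\mathcal{F}_\a\big[e_{\a}(\cdot)fg\big](m)=\int_{\mathbf{T}^n_\a}e_{\a}(x)f(x)g(x)\,K_\a(m,x)\,dx
=A_\a^n e_\a(m)\int_{\mathbf{T}^n_\a}e_{\a}^2(x)f(x)g(x)\,e_{\a}(m,x)\,dx,
\end{align*}
using $K_\a(m,x)=A_\a^n e_\a(x)e_\a(m,x)e_\a(m)$ and $e_\a(x)e_\a(x)=e_\a^2(x)$. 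Now substitute the $e_{-\a}L^2$-expansion $f=\sum_{j}\mathcal{F}_\a(f)(j)K_{-\a}(j,\cdot)$ from part (2), which converges in $L^2$; since $e_\a^2(\cdot)g\cdot e_\a(m,\cdot)$ is a bounded factor, term-by-term integration is justified by dominated convergence / continuity of the inner product. Writing $K_{-\a}(j,x)=A_{-\a}^n e_{-\a}(x)e_{-\a}(j,x)e_{-\a}(j)$ and combining the exponential factors $e_\a^2(x)e_{-\a}(x)=e_\a(x)$, $e_\a(m,x)e_{-\a}(j,x)=e^{-2\pi i (m-j)\cdot x\csc\a}$, each summand becomes a constant times $\int_{\mathbf{T}^n_\a}e_\a(x)g(x)\,e_{-\a}(j-m,x)\,dx$, which is exactly (a multiple of) $\mathcal{F}_{-\a}\big(e_\a^2(\cdot)g\big)(j-m)$ after matching it against the definition of $\mathcal{F}_{-\a}$; the leftover scalars $A_\a^n A_{-\a}^n$, $e_\a(m)$, $e_{-\a}(j)$, and the cross-term $e^{-2\pi i(j\cdot m)\cot\a}$ arising from $e_\a(m)e_{-\a}(j)$ combined with the $\csc$-to-$\cot$ bookkeeping assemble into the claimed prefactor $A_\a^n e_\a^2(m)$.

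The main obstacle I anticipate is entirely in part (5): it is a bookkeeping problem of correctly tracking the chirp factors $e_{\pm\a}(x)=e^{\pi i|x|^2\cot\a}$, the linear exponentials $e_{\pm\a}(m,x)=e^{\mp 2\pi i (m\cdot x)\csc\a}$, and the constants $A_{\pm\a}=\sqrt{1\mp i\cot\a}$, and in recognizing which partial product matches the definition of $\mathcal{F}_{-\a}\big(e_\a^2(\cdot)g\big)(j-m)$ rather than some other normalization. One must be careful that $A_\a A_{-\a}=\sqrt{(1-i\cot\a)(1+i\cot\a)}=|\csc\a|$ (with the appropriate branch), and that the identity $|j|^2-|m|^2$ versus $|j-m|^2$ differs by $2j\cdot m-2|m|^2$, which is precisely the source of the $e^{-2\pi i(j\cdot m)\cot\a}$ factor and a correction to $e_\a(m)$ giving $e_\a^2(m)$. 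Parts (1)--(4), by contrast, should be essentially immediate citations of the orthonormal-basis machinery together with the facts already proved in the excerpt, and I would keep their proofs to one or two lines each.
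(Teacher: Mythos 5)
Your proposal is correct, and for parts (1)--(4) it coincides with the paper, which simply notes that these follow as in the classical Fourier-series case from the complete orthonormal system $\{K_{-\a}(m,\cdot)\}$ established just before the statement. For part (5) you take a genuinely different, more hands-on route: you expand $f=\sum_j\mathcal{F}_\a(f)(j)K_{-\a}(j,\cdot)$ inside the defining integral and integrate term by term, tracking the chirp algebra explicitly. The paper instead sets $\overline{G(x)}=e_\a(x)g(x)K_\a(m,x)$, applies Parseval's relation (3) to write $\mathcal{F}_\a[e_\a fg](m)=\sum_j\mathcal{F}_\a(f)(j)\overline{\mathcal{F}_\a(G)(j)}$, and then evaluates $\mathcal{F}_\a(G)(j)$ by citing the already-proved modulation identity (Proposition~\ref{pro1.2}(6)) and the conjugation identity (Proposition~\ref{pro1.2}(3)). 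The two computations are arithmetically identical --- your bookkeeping $e_{-\a}(j)e_\a(j-m)=e_\a(m)e^{-2\pi i(j\cdot m)\cot\a}$, the cancellation $A_{-\a}^n/A_{-\a}^n$, and the matching of $\int e_\a(x)g(x)e^{2\pi i((j-m)\cdot x)\csc\a}dx$ against $\mathcal{F}_{-\a}(e_\a^2 g)(j-m)$ all check out and reproduce the claimed prefactor $A_\a^n e_\a^2(m)$ --- but the paper's version buys a shorter write-up by recycling Proposition~\ref{pro1.2}, while yours makes the mechanism transparent without needing those identities. One small correction: since $g$ is merely in $e_{-\a}L^2(\mathbf{T}^n_\a)$, the factor $e_\a^2(\cdot)g\,e_\a(m,\cdot)$ is an $L^2$ function, not a bounded one, so the term-by-term integration should be justified by continuity of the $L^2$ inner product (Cauchy--Schwarz against the $L^2$-convergent partial sums), which you do also invoke; the ``dominated convergence with a bounded factor'' phrasing should be dropped. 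This is exactly the point that the paper's appeal to Parseval absorbs automatically.
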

\begin{proof}
The proofs of (1), (2), (3) and (4) are quite similar to those in the case of Fourier series, so we omit them.
%Next we prove (4). We already know that the map $f\mapsto\{\mathcal{F}_\a(\mathcal{M}_{-\a}f)(m)\}$ is an injective isometry. It remains to show that it is onto. Given a square summable sequence $\{a_m\}_{m\in Z^n}$ of complex numbers, define
%$$f_N(t)=e^{\pi it^{2}\cot\alpha}\sum_{|m|\leq N}a_mK_{-\a}(m,t)$$
%Observe that $f_N$ is a Cauchy sequence in $L^2(\mathbf{T}^n_\a)$ and it therefore converges to some $f\in L^2(\mathbf{T}^n_\a)$. Then we have $\mathcal{F}_\a (\mathcal{M}_{-\a}f)(m)=a_m$ for all $m \in Z^n$.
{Now we turn to (5). Let $\overline{G(x)}=e_{\a}(x)g(x)K_{\a}(m,x)$}. From (3), we have
\begin{align*}
\mathcal{F}_\a \big[e_{\a}(fg)\big](m)=&\int_{\mathbf{T}^n_\a}e_{\a}(x)f(x)g(x) K_{\a}(m,x)dx\\
%=&\int_{\mathbf{T}^n_\a}f(x)e^{-2\pi i x^2\cot\a}g(x)K_{\a}(m,x)dt\\
=&\int_{\mathbf{T}^n_\a}f(x)\overline{G(x)}dx\\
=&\sum_{j\in \mathbf{Z}^n}\mathcal{F}_\a(f)(j)\overline{\mathcal{F}_\a(G)(j)}.
\end{align*}
In view of Proposition \ref{pro1.2} (6), we get
\begin{align*}
{\mathcal{F}_\a(G)(j)}
=&\int_{\mathbf{T}^n_\a}\overline{e_{\a}(x)g(x)K_{\a}(m,x)}K_{\a}(j,x)dx\\
=&\overline{A_{\a}^n}e_{-\a}(m)\int_{\mathbf{T}^n_\a}\big[\overline{e^2_{\a}(x)g(x)}e^{2\pi imx\csc\a}\big]K_{\a}(j,x)dx\\
=&\overline{A_{\a}^n}e_{-\a}(m)\mathcal{F}_\a\big(\overline{e_{\a}^2g}e_{-\a}(m,\cdot)\big)(j)\\
=&\overline{A_{\a}^n}e_{-\a}^2(m)e^{2\pi i(j\cdot m)\cot\a}\mathcal{F}_\a\big(\overline{e^2_{\a}g}\big)(j-m).
\end{align*}
By Proposition \ref{pro1.2} (3), we have
\begin{align*}
\overline{\mathcal{F}_\a(G)(j)}
=&{A_{\a}^n}e^2_{\a}(m)e^{-2\pi i(j\cdot m)\cot\a}\overline{\mathcal{F}_\a\big(\overline{e^2_{\a}g}\big)(j-m)}\\
=&{A_{\a}^n}e^2_{\a}(m)e^{-2\pi i(j\cdot m)\cot\a}{\mathcal{F}_{-\a}\big(e^2_{\a}g\big)(j-m)}.
\end{align*}
\end{proof}
\subsection{The fractional Poisson summation formula }\label{section3.3}
In this subsection, we establish an important connection between fractional Fourier analysis on $\mathbf{T}^n_\a$ and Fourier analysis on $\rn$.
\begin{theorem}\label{th3.1}
{\rm(Fractional Poisson summation formula)} Suppose that $f$ is a continuous function on $\rn$. If there exist $C,\delta>0$ such that
\begin{align}\label{eq3.8}
|f(x)|\leq \frac{C}{(1+|x|)^{n+\delta}},\quad \forall x\in \rn
\end{align}
and the fractional Fourier transform $\mathcal{F}_\a(e_{-\a}f)$ restricted on $\mathbf{Z}^n$ satisfies
\begin{align}\label{eq4.1}
\sum_{ m\in \mathbf{Z}^n}\big|\mathcal{F}_\a\big(e_{-\a}f\big)(m)\big|<\infty.
\end{align}
Then for all $x\in\rn$ we have
\begin{align}\label{eq4.2}
\sum_{ m\in \mathbf{Z}^n}\mathcal{F}_\a\big(e_{-\a}f\big)(m)K_{-\a} (m,x)=e_{-\a}(x)\sum_{ k\in \mathbf{Z}^n} f(x+k|\sin\a|),
\end{align}
and in particular
\begin{align*}
A_{-\a}^n\sum_{ m\in \mathbf{Z}^n}\mathcal{F}_\a\big(e_{-\a}f\big)(m)e_{-\a}(m)=\sum_{ k\in\mathbf{ Z}^n}f(k|\sin\a|).
\end{align*}
\end{theorem}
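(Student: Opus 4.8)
The plan is to follow the classical template for the Poisson summation formula, transferred to the fractional setting via the periodization operator. First I would define the periodization
\[
F(x) := \sum_{k\in\mathbf{Z}^n} f(x+k|\sin\a|),
\]
and check, using the decay hypothesis \eqref{eq3.8}, that this series converges absolutely and uniformly on compact sets, so that $F$ is continuous and manifestly $|\sin\a|$-periodic in each coordinate; hence $e_{-\a}F$ defines an element of $e_{-\a}L^1(\mathbf{T}^n_\a)$. The key computation is then to identify its fractional Fourier coefficients: for $m\in\mathbf{Z}^n$,
\[
\mathcal{F}_\a(e_{-\a}F)(m)=\int_{\mathbf{T}^n_\a} e_{-\a}(x)F(x)K_\a(m,x)\,dx,
\]
and I would unfold $F$, interchange sum and integral (justified by uniform convergence), and make the change of variables $x\mapsto x-k|\sin\a|$ in the $k$-th term. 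The crucial point is that $e_{-\a}(x)K_\a(m,x)$, when multiplied through, collapses to a pure exponential $A_\a^n e_\a(m) e^{-2\pi i (m\cdot x)\csc\a}$ on the integrand, and the translation $x\mapsto x-k|\sin\a|$ leaves $e^{-2\pi i (m\cdot x)\csc\a}$ invariant because $|\sin\a|\csc\a = \operatorname{sgn}(\sin\a)\in\{\pm1\}$ and $m\cdot k\in\mathbf{Z}$. Reassembling the pieces shows that the $k$-sum of integrals over $\mathbf{T}^n_\a - k|\sin\a|$ is exactly the integral over $\rn$, yielding $\mathcal{F}_\a(e_{-\a}F)(m)=\mathcal{F}_\a(e_{-\a}f)(m)$.

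With that identity in hand, hypothesis \eqref{eq4.1} says $\sum_m |\mathcal{F}_\a(e_{-\a}F)(m)|<\infty$, so Corollary \ref{cor3.3} (fractional Fourier inversion) applies to $e_{-\a}F$ and gives, a.e. and in fact everywhere by continuity,
\[
e_{-\a}(x)F(x)=\sum_{m\in\mathbf{Z}^n}\mathcal{F}_\a(e_{-\a}F)(m)K_{-\a}(m,x)=\sum_{m\in\mathbf{Z}^n}\mathcal{F}_\a(e_{-\a}f)(m)K_{-\a}(m,x).
\]
Multiplying both sides by $e_\a(x)$ (equivalently, dividing by $e_{-\a}(x)$, which is unimodular) yields \eqref{eq4.2}. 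The particular case follows by evaluating at $x=0$: then $K_{-\a}(m,0)=A_{-\a}^n e_{-\a}(m)$ and $e_{-\a}(0)=1$, so the left side becomes $A_{-\a}^n\sum_m \mathcal{F}_\a(e_{-\a}f)(m)e_{-\a}(m)$ and the right side becomes $\sum_k f(k|\sin\a|)$.

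The main obstacle is the bookkeeping in the coefficient computation: one must track the three factors $e_\a(x)$, $e_\a(m,x)$, $e_\a(m)$ packaged in $K_\a(m,x)$ together with the extra $e_{-\a}(x)$ in front, verify that all the $\cot\a$-quadratic phases cancel so that only the linear phase $e^{-2\pi i(m\cdot x)\csc\a}$ survives, and then confirm the invariance of this phase under the lattice translation by $|\sin\a|\mathbf{Z}^n$ — here the sign of $\sin\a$ matters and must be handled cleanly (it is absorbed since $m\cdot k\in\mathbf{Z}$). The interchange of summation and integration needs the uniform convergence granted by \eqref{eq3.8} with $\delta>0$, and the passage from ``a.e.'' to ``everywhere'' uses that both sides of \eqref{eq4.2} are continuous, the left by \eqref{eq4.1} (uniform convergence of the $K_{-\a}$-series) and the right by the decay estimate. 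Everything else is routine.
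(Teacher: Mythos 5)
Your proposal is correct and follows essentially the same route as the paper: periodize $f$, show via \eqref{eq3.8} that the periodization lies in $e_{-\a}L^1(\mathbf{T}^n_\a)$ and that its fractional Fourier coefficients coincide with $\mathcal{F}_\a(e_{-\a}f)$ on $\mathbf{Z}^n$ (using the collapse of the quadratic phases and the lattice invariance of $e^{-2\pi i(m\cdot x)\csc\a}$), then invoke Corollary \ref{cor3.3} together with \eqref{eq4.1} and evaluate at $x=0$ for the particular case. If anything, your treatment is slightly more careful than the paper's on the two delicate points (the sign $|\sin\a|\csc\a=\pm1$ in the translation invariance, and the upgrade from a.e.\ to everywhere equality via continuity of both sides of \eqref{eq4.2}).
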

\begin{proof}
Suppose
$$F(x)=e_{-\a}(x)\sum_{ k\in \mathbf{Z}^n}f(x+k|\sin\a|).$$
It is easy to check that $F\in e_{-\a}L^1(\mathbf{T}^n_\a)$. We show that the sequence of the fractional Fourier coefficients of $F$ coincides with the restriction of the fractional Fourier transform of $f$ on $\mathbf{Z}^n$. In fact, for any $k\in\mathbf{Z}^n$, we get
\begin{align*}
|k|\sin\a|+x|\geq |k\sin\a|-|x|\geq |k\sin\a|-\frac{|\sin\a|}{2}\sqrt n
\end{align*}
for all  $x\in [-|\sin\a|/2,|\sin\a|/2]^n$. This together with (\ref{eq3.8}) implies
\begin{align}\label{eq3.9}
\sum_{ k\in \mathbf{Z}^n} \big|f(x+k|\sin\a|)\big|
%\leq& \sum_{ k\in \mathbf{Z}^n}\frac{C }{(1+|k|\sin\a|+x|)^{n+\delta}}\\
%\nonumber\leq& \sum_{ k\in \mathbf{Z}^n}\frac{(1+\sqrt n)^{ n+\delta}}{(1+\frac{|\sin\a|}{2}\sqrt n+|k|\sin\a|+x|)^{n+\delta}}\\
\leq& \sum_{ k\in \mathbf{Z}^n}\frac{C_{n,\delta}}{(1+|k\sin\a|)^{n+\delta}}<\infty.
\end{align}
Therefore,
\begin{align*}
\mathcal{F}_\a(F)(m)
%=&\int_{\mathbf{T}^n_\a} \mathcal{M}_{-\a}F(x)K_{\a}(m,x)dx\\
%=&A_\a^n\int_{\mathbf{T}^n_\a}e^{-x^2\pi i\cot\a}F(x)e^{m^2\pi i\cot\a-2\pi i m \cdot x\csc \a+x^2\pi i\cot\a}dx\\
%=&A_\a^n e^{m^2\pi i\cot\a}\int_{\mathbf{T}^n_\a} F(x)e^{-2\pi i m \cdot x\csc \a}dx\\
=&A_\a^n e_{\a}(m)\int_{\mathbf{T}^n_\a}e^{-2\pi i (m \cdot x)\csc \a}\sum_{ k\in \mathbf{Z}^n}f(x+k|\sin\a|)dx\\
=&A_\a^n e_{\a}(m)\sum_{ k\in\mathbf{ Z}^n}\int_{\mathbf{T}^n_\a}e^{-2\pi i (m \cdot x)\csc \a} f(x+k|\sin\a|)dx\\
%=&A_\a e^{m^2\pi i\cot\a}\sum_{ k\in Z^n}\int_{\mathbf{T}^n_\a+k|\sin\a|}e^{-2\pi i m \cdot x'\csc \a}f(x')dx'e^{-2\pi i m \cdot k |\sin\a| \csc \a}\textcolor{red}{=1}\\
=&A_\a^n e_{\a}(m)\sum_{ k\in \mathbf{Z}^n}\int_{\mathbf{T}^n_\a+k|\sin\a|}e^{-2\pi i (m \cdot x)\csc \a}f(x)dx\\
=&A_\a^n e_{\a}(m)\int_{\mathbb{R}^n}e^{-2\pi i (m \cdot x)\csc \a}f(x)dx\\
%=&A_\a\int_{\mathbb{R}^n}e^{m^2\pi i\cot\a-2\pi i m \cdot x\csc \a+x^2\pi i\cot\a} \mathcal{M}_{-\a}f(x)dx\\
%=&\int_{\rn}\mathcal{M}_{-\a}f(x)K_{\a}(m,x)dx\\
=&\mathcal{F}_\a(e_{-\a}f)(m),
\end{align*}
where the second equality follows from (\ref{eq3.9}). Meanwhile, we also obtain that $F$ is continuous. It follows that (\ref{eq4.1}) holds with $|\mathcal{F}_\a(F)|$ in place of $|\mathcal{F}_\a(e_{-\a}f)|$. By Corollary \ref{cor3.3}, we obtain that (\ref{eq4.2}) holds for $x\in \mathbf{T}^n_\a$ and then by periodicity, for all $x\in \rn$.
\end{proof}

\section{Decay of Fractional Fourier coefficient}
In this section, we study the decay of the fractional Fourier coefficients associated with the smoothness of a function.
\subsection{Decay of fractional Fourier coefficients of integrable function}\label{section4.1}
\begin{proposition}\label{pro3.4}\,\
{\rm(Riemann-Lebesgue lemma)} Suppose $f\in e_{-\a}L^1(\mathbf{T}^n_\a)$.  We have
\begin{align*}
|\mathcal{F}_\a(f)(m)|\to 0 \quad as \quad|m|\to\infty.
\end{align*}
%\begin{proof}
%Since $f\in L^1(\mathbf{T}^n_\a)$, we have $\mathcal{F}(f)(m)\rightarrow 0$ as $|m|\rightarrow0$. Then
%\begin{align*}
%|\mathcal{F}_\a (\mathcal{M}_{-\a}f)(m)|=|A_\a\mathcal{F}(f)(m\csc\a)|\rightarrow 0
%\end{align*}
% as $|m|\rightarrow0$.
%\end{proof}
\begin{proof}
For all $\v>0$,  there exists a trigonometric polynomial $P_\a$ such that $\|f-P_\a\|_{L^1}<\v$. If $|m|>\text{degree}(P_\a)$, then  we have $\mathcal{F}_\a(P_\a)(m)= 0$. Hence
\begin{align*}
|\mathcal{F}_\a(f)(m)|=|\mathcal{F}_\a(f)(m)-\mathcal{F}_\a(P_\a)(m)|\leq \|f-P_\a\|_{L^1}<\v.
\end{align*}
This implies that $|\mathcal{F}_\a(f)(m)|\to 0$  as $|m| \to\infty$.
\end{proof}
\end{proposition}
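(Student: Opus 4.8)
The plan is to exploit density of trigonometric polynomials of order $\a$ in $e_{-\a}L^1(\mathbf{T}^n_\a)$, which has already been established in Proposition \ref{pro1.5}. The key observation is that a trigonometric polynomial $P_\a(x)=\sum_{m\in\mathbf{Z}^n}c_{m,\a}K_{-\a}(m,x)$ has only finitely many nonzero fractional Fourier coefficients, since by Definition \ref{def1.2} we have $\mathcal{F}_\a(P_\a)(m)=c_{m,\a}$, which vanishes as soon as $|m_1|+\cdots+|m_n|$ exceeds the degree of $P_\a$. So once we have approximated $f$ by such a $P_\a$ to within $\v$ in $L^1$, the difference $|\mathcal{F}_\a(f)(m)|$ for large $|m|$ is controlled entirely by the approximation error.

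First I would fix $\v>0$ and invoke Proposition \ref{pro1.5} to produce a trigonometric polynomial $P_\a$ of order $\a$ with $\|f-P_\a\|_{L^1(\mathbf{T}^n_\a)}<\v$. Next I would use the boundedness property of fractional Fourier coefficients — namely Proposition \ref{pro1.2}(8), which gives $\sup_{m}|\mathcal{F}_\a(h)(m)|\leq|\csc\a|^{n/2}\|h\|_{L^1(\mathbf{T}^n_\a)}$ — applied to $h=f-P_\a$, together with linearity (Proposition \ref{pro1.2}(1)). Then for every $m$ with $|m_1|+\cdots+|m_n|>\mathrm{degree}(P_\a)$ we have $\mathcal{F}_\a(P_\a)(m)=0$, hence
\begin{align*}
|\mathcal{F}_\a(f)(m)|=|\mathcal{F}_\a(f-P_\a)(m)|\leq|\csc\a|^{n/2}\|f-P_\a\|_{L^1(\mathbf{T}^n_\a)}<|\csc\a|^{n/2}\v.
\end{align*}
Since $\v$ is arbitrary and $|\csc\a|$ is a fixed constant, this shows $|\mathcal{F}_\a(f)(m)|\to0$ as $|m|\to\infty$.

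There is no real obstacle here: the entire argument is the standard density-plus-uniform-bound reduction, and both ingredients (density of trigonometric polynomials of order $\a$, and the $L^1\to\ell^\infty$ bound on coefficients) are already available in the paper. The only point requiring a word of care is the normalization constant in the uniform bound — one should cite Proposition \ref{pro1.2}(8) rather than asserting the sharper bound $\|f-P_\a\|_{L^1}$ directly, unless one has normalized $K_\a$ so that $|A_\a^n e_\a(x)e_\a(m,x)e_\a(m)|=1$, in which case the constant $|\csc\a|^{n/2}$ can be absorbed. Either way the conclusion is unaffected.
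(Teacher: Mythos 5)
Your proposal is correct and follows essentially the same route as the paper: approximate $f$ in $L^1$ by a trigonometric polynomial of order $\a$, note that its coefficients vanish beyond its degree, and control the remainder by the uniform bound on coefficients. In fact you are slightly more careful than the paper's own proof, which omits the factor $|\csc\a|^{n/2}$ from Proposition \ref{pro1.2}(8) when bounding $|\mathcal{F}_\a(f-P_\a)(m)|$ by $\|f-P_\a\|_{L^1}$; as you observe, this harmless constant does not affect the conclusion.
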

For $f\in e_{-\a}L^1(\mathbf{T}^n_\a)$, we claim that $|\mathcal{F}_\a(f)(m)|$ may tend to zero arbitrarily slowly. More precise, we have the following theorem.
\begin{theorem}
Let $(d_m)_{m\in \mathbf{Z}^n}$ be a sequence of positive real numbers with $d_m\to 0$ as $|m|\to \infty$. Then there exists a function $ f\in e_{-\a}L^1(\mathbf{T}^n_\a)$ such that $|\mathcal{F}_\a(f)(m)|\geq d_m$ for all $m\in \mathbf{Z}^n$. In other words, given any rate of decay, there exists an integrable function on $\mathbf{T}^n_\a$ whose absolute value of fractional Fourier coefficients have slower rate of decay.
\end{theorem}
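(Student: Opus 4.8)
The plan is to construct $f$ directly, as a positively weighted superposition of fractional Fej\'er kernels of rapidly increasing order, all centred at the origin. The mechanism is this: combining Proposition \ref{pro1.3} with the tensor‑product identity (\ref{eq2.11}), one gets for every nonnegative integer $N$ and every $m=(m_1,\dots,m_n)\in\mathbf{Z}^n$
\begin{align*}
\mathcal{F}_\a\big(e_{-\a}{{F}}^{n,\a}_N\big)(m)=A_\a^n\,e_\a(m)\prod_{j=1}^n\Big(1-\frac{|m_j|}{N+1}\Big)_+ ,
\end{align*}
so that the only non‑unimodular factor, $A_\a^n e_\a(m)$, is \emph{independent of $N$}, while the ``amplitude'' $\prod_j(1-|m_j|/(N+1))_+$ is nonnegative and is $\ge 2^{-n}$ whenever $|m_j|\le (N+1)/2$ for all $j$; also $\|e_{-\a}{{F}}^{n,\a}_N\|_{L^1(\mathbf{T}^n_\a)}=1$ by Proposition \ref{pro2.4}. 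Hence for any positive summable weights $(\lambda_t)$ the function $f:=\sum_{t\ge1}\lambda_t\,e_{-\a}{{F}}^{n,\a}_{N_t}$ lies in $e_{-\a}L^1(\mathbf{T}^n_\a)$ (monotone convergence gives $f\in e_{-\a}L^1$; dominated convergence, or the continuity of $\mathcal{F}_\a$ from Proposition \ref{pro1.2}(8), lets one evaluate $\mathcal{F}_\a$ term by term), and
\begin{align*}
|\mathcal{F}_\a(f)(m)|=|\csc\a|^{n/2}\sum_{t\ge1}\lambda_t\prod_{j=1}^n\Big(1-\frac{|m_j|}{N_t+1}\Big)_+\ \ge\ |\csc\a|^{n/2}\,2^{-n}\,\lambda_{t}
\end{align*}
for \emph{any} index $t$ for which $|m_j|\le (N_t+1)/2$ holds for all $j$, because every summand is nonnegative. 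This is the fractional counterpart of the nonnegativity of the classical Fej\'er coefficients, and it is exactly what rules out cancellation.

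The remaining task is to pick $N_t$ and $\lambda_t$ so as to (i) cover every $m$ by the plateau cube of some $e_{-\a}{{F}}^{n,\a}_{N_t}$ whose weight already dominates $d_m$ there, and (ii) keep $\sum_t\lambda_t$ finite. Set $s_j:=\sup\{d_m:\max_k|m_k|=j\}$; this is finite for each $j$ and $s_j\to0$, so $S:=\sup_{j\ge0}s_j<\infty$ and, for each $t\ge2$, there is $J_t$ with $s_j\le2^{-t}$ for $j\ge J_t$. Put $\rho_0:=0$, $\rho_t:=\max(\rho_{t-1}+1,J_{t+1})$ for $t\ge1$ (so $\rho_t\uparrow\infty$ and $\rho_{t-1}\ge J_t$ for $t\ge2$), $N_t:=2\rho_t-1$, $\lambda_1:=2^n|\csc\a|^{-n/2}\max(S,1)$ and $\lambda_t:=2^{n-t}|\csc\a|^{-n/2}$ for $t\ge2$; then $(\lambda_t)$ is positive and summable, so $f$ is well defined. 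For a given $m$, take the unique $t$ with $\rho_{t-1}\le\max_k|m_k|\le\rho_t$; then $\max_k|m_k|\le\rho_t=(N_t+1)/2$, and the displayed bound gives $|\mathcal{F}_\a(f)(m)|\ge|\csc\a|^{n/2}2^{-n}\lambda_t$, which equals $2^{-t}\ge s_{\max_k|m_k|}\ge d_m$ if $t\ge2$ and equals $\max(S,1)\ge d_m$ if $t=1$. This proves $|\mathcal{F}_\a(f)(m)|\ge d_m$ for all $m\in\mathbf{Z}^n$.

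I expect the only real obstacle to be the bookkeeping of the second step: arranging the orders $N_t$ (equivalently the radii $\rho_t$) to grow fast enough that the weights $\lambda_t$, which must beat $\sup_{\max_k|m_k|\ge\rho_{t-1}}d_m$ on the $t$-th plateau, stay summable. This is precisely where $d_m\to0$ is used, and it is routine once set up as above. The conceptually essential input is the first-step observation that all these kernels share the single phase factor $A_\a^n e_\a(m)$ with nonnegative amplitude, so that piling them up never decreases the absolute value of a coefficient. One could instead write $f=e_{-\a}g$ and use the dilation $x\mapsto x/|\sin\a|$ to reduce to the classical statement for ordinary Fourier series on $\mathbf{T}^n$, but the direct argument costs little more and stays within the machinery already developed.
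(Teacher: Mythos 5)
Your proof is correct. It rests on the same structural facts the paper uses --- the fractional Fej\'er kernels are nonnegative with $L^1$ norm one, and their fractional Fourier coefficients are a fixed phase $A_\a^n e_\a(m)$ (independent of $N$) times a nonnegative amplitude, so positive superpositions suffer no cancellation --- but the execution is genuinely different. The paper works in dimension one first, invokes Lemmas 3.3.2 and 3.3.3 of Grafakos to replace the target sequence by a convex majorant $(c_j)$ decreasing to zero, and takes the weights to be the second differences $(j+1)(c_j+c_{j+2}-2c_{j+1})$; this yields the exact identity $|\mathcal{F}_\a(f)(m)|=|\csc\a|^{1/2}c_{|m|}$, after which the $n$-dimensional case is obtained by tensoring. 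You instead use a lacunary block construction: rapidly increasing orders $N_t$ with geometric weights $\lambda_t$, exploiting only that the Fej\'er multiplier is at least $2^{-n}$ on the plateau $\max_k|m_k|\le(N_t+1)/2$. This avoids the convex-sequence machinery entirely, works directly in all dimensions, and trades the paper's exact coefficient formula for a lower bound, which is all the theorem asks for. Two cosmetic points: the $t$ you select is not literally unique when $\max_k|m_k|$ equals some $\rho_t$ (either choice works, so nothing breaks), and you should note explicitly that $e_{-\a}(x)F_N^{n,\a}(x)=\prod_j e_{-\a}(x_j)F_N^{1,\a}(x_j)$, which is what licenses the appeal to the tensor identity (\ref{eq2.11}); both are trivial to repair.
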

\begin{proof}
We first deal with $n=1$. Let $\{a_m\}_{m\in \mathbf{Z}}$ be a sequence of positive numbers that converges to zero as $|m|\to \infty$. Apply Lemma 3.3.2 in \cite{G} to the sequence $\{a_m+a_{-m}\}_{m\geq0}$ to find a convex sequence $\{c_m\}_{m\geq0}$ that dominates $\{a_m+a_{-m}\}_{m\geq0}$ and decreases to zero as $|m|\to \infty$. Extend $c_m$ for $m<0$ by taking $c_m = c_{|m|}$. Set
\begin{align}\label{eq3.3}
f(x)=e_{-\a}(x)\sum_{j=0}^\infty(j+1)(c_j+c_{j+2}-2c_{j+1})F_j^{1,\a}(x).
\end{align}
%By Proposition \ref{pro1.3}, we obtain
%\begin{align}\label{eq3.2}
%\mathcal{F}_\a (\mathcal{M}_{-\a}F_j^{1,\a})(m)=A_\a e^{\pi i m^2\cot\a}\big(1-\frac{|m|}{j+1}\big).
%\end{align}
%The convexity of the sequence $c_m$
%and $F_j^{1,\a}\geq 0$ imply that $f \geq 0$.
Using Lemma 3.3.3 in \cite{G} with $s =0$, we have
\begin{align}\label{eq3.4}
\sum_{j=0}^\infty(j+1)(c_j+c_{j+2}-2c_{j+1})\|F_j^{1,\a}\|_{L^1(\mathbf{T}^1_\a)}=c_0<\infty,
\end{align}
since $\|F_j^{1,\a}\|_{L^1(\mathbf{T}^1_\a)}=1$  for all $j$. Therefore, we obtain that $f\in{e_{-\a}L^1(\mathbf{T}^1_\a)}$ by noting that the series in (\ref{eq3.3}) converges in ${L^1(\mathbf{T}^1_\a)}$. For $m \in \mathbf{Z}$, we have
\begin{align}\label{eq3.5}
\nonumber|\mathcal{F}_\a(f)(m)|=&\sum_{j=0}^\infty(j+1)(c_j+c_{j+2}-2c_{j+1})\big|\mathcal{F}_\a (e_{-\a}F_j^{1,\a})(m)\big|\\
\nonumber=&\sum_{j=|m|}^\infty(j+1)(c_j+c_{j+2}-2c_{j+1})\Big|A_\a e_{\a}(m)\Big(1-\frac{|m|}{j+1}\Big)\Big|\\
\nonumber=&|\csc\a|^{1/2}\sum_{r=0}^\infty(r+1)(c_{r+|m|}+c_{r+|m|+2}-2c_{r+|m|+1})\\
=&|\csc\a|^{1/2}c_{|m|}=|\csc\a|^{1/2}c_m,
\end{align}
where the second equality follows from Proposition \ref{pro1.3} and the third equality is due to Lemma 3.3.3 in \cite{G} with $s = |m|$.

Next, we turn to $n\geq 2$. Let $(d_m)_{m\in \mathbf{Z}^n}$ be a sequence of positive real numbers with $d_m\to 0$ as $|m|\to \infty$. There exists a positive sequence $\{a_j\}_{j\in \mathbf{Z}}$ such that $a_{m_1}\cdots a_{m_n}\geq d_{({m_1},\ldots,{m_n})}$ and $a_j\to 0$ as $|j|\to \infty$. Set $$\mathbf{f}(x_1,\ldots,x_n)=f(x_1)\cdots f(x_n),$$
where $f$ is defined as in (\ref{eq3.3}) such that $|\mathcal{F}_\a (f)(m)|\geq a_m$. This together with (\ref{eq2.11}) implies $|\mathcal{F}_\a(\mathbf{f})(m)|\geq d_m$.
\end{proof}
\subsection{Decay of Fractional Fourier coefficients of smooth functions}\label{section4.2}
In this subsection, we are devoted to studying the relationship between the decay of fractional Fourier coefficients and the smoothness of a function.
\begin{definition}
For $0<\gamma<1$, the homogeneous Lipschitz space of order $\gamma$ on $\mathbf{T}^n_\a$ is defined by
 $$\dot{\Lambda}_\gamma(\mathbf{T}^n_\a)=\{f:\mathbf{T}^n_\a\to C\; with\; \|f\|_{\dot{\Lambda}_\gamma}<\infty\},$$
 where $$\|f\|_{\dot{\Lambda}_\gamma}:=\underset{h\neq0}{\sup_{x,h\in\mathbf{T}^n_\a}}\frac{|f(x+h)-f(x)|}{|h|^\gamma}.$$
\end{definition}
%\noindent For all $|\kappa|\leq m$, suppose
%\begin{align*}
%e_{-\a}\dot{\Lambda}_\gamma(\mathbf{T}^n_\a):=\big\{f: \partial^\kappa f=\mathcal{M}_{-\a}(\partial^\kappa g), \partial^\kappa g\in \dot{\Lambda}_\gamma(\mathbf{T}^n_\a)\big\}.
%\end{align*}
Next, we discuss the decay of fractional Fourier coefficients of Lipschitz functions.
\begin{theorem}\label{th3.7}
Let $s\in \mathbf{Z}$ and $s\geq 0$.\\
(a) Suppose that $f\in e_{-\a}C^s(\mathbf{T}^n_\a)$. Then
\begin{align}\label{eq3.7}
|\mathcal{F}_\a (f)(m)|\leq
\Big(\frac{\sqrt n}{2\pi |\csc\a|}\Big)^s\frac{\max\limits_{|\beta|=s}{|\mathcal{F}_\a [e_{-\alpha}\partial^\beta(e_{\alpha}f)](m)|}}{|m|^s}
\end{align}
and thus
\begin{align*}
|\mathcal{F}_\a(f)(m)|(1+|m|^s)\to 0
\end{align*}
as $|m|\to \infty$.\\
(b) Suppose that $f\in e_{-\a}C^s(\mathbf{T}^n_\a)$ and whenever $|\beta|=s$, $\partial^\beta(e_{\a}f)$ are in $\dot{\Lambda}_\gamma(\mathbf{T}^n_\a)$ for some $0<\gamma<1$. Then
\begin{align}\label{eq3.6}
|\mathcal{F}_\a (f)(m)|\leq\frac{(\sqrt n)^{s+\gamma}}{(2\pi)^s |\csc\a|^{s+\gamma+n-1}2^{\gamma+1}}\frac{\max\limits_{|\beta|=s}{\|\partial^\beta(e_{\a}f)
\|_{\dot{\Lambda}_\gamma}}}{|m|^{s+\gamma}}, \qquad m\neq0.
\end{align}
\end{theorem}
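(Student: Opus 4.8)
The plan is to reduce both parts to estimates on the Fourier coefficients of the ordinary function $e_\a f$ (which lives on the classical torus after the rescaling $x\mapsto x\csc\a$) and then translate back. For part (a), the starting point is Proposition~\ref{pro1.2}(9), which gives
$$\mathcal{F}_\a[e_{-\alpha}\partial^\beta(e_{\alpha}f)](m)=(2\pi i m\csc\a)^\beta\mathcal{F}_\a(f)(m)$$
for every multi-index $\beta$ with $|\beta|=s$. Choosing the coordinate direction $j$ for which $|m_j|$ is largest, so that $|m_j|\geq |m|/\sqrt n$, and taking $\beta=s e_j$, I would solve for $\mathcal{F}_\a(f)(m)$ to obtain
$$|\mathcal{F}_\a(f)(m)|=\frac{|\mathcal{F}_\a[e_{-\alpha}\partial^{s e_j}(e_{\alpha}f)](m)|}{(2\pi|\csc\a|)^s|m_j|^s}\leq\Big(\frac{\sqrt n}{2\pi|\csc\a|}\Big)^s\frac{\max_{|\beta|=s}|\mathcal{F}_\a[e_{-\alpha}\partial^\beta(e_{\alpha}f)](m)|}{|m|^s},$$
which is exactly \eqref{eq3.7}. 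The decay statement $|\mathcal{F}_\a(f)(m)|(1+|m|^s)\to 0$ then follows by applying the Riemann--Lebesgue lemma (Proposition~\ref{pro3.4}) to the function $e_{-\alpha}\partial^\beta(e_{\alpha}f)\in e_{-\a}L^1(\mathbf{T}^n_\a)$, since each $\partial^\beta(e_\a f)$ is continuous, hence integrable, on the compact set $\mathbf{T}^n_\a$.

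For part (b), I would again pass to the largest coordinate $j$ and work with $g:=\partial^{s e_j}(e_\a f)$, which by hypothesis lies in $\dot\Lambda_\gamma(\mathbf{T}^n_\a)$. The key is the classical ``halfway translation'' trick: writing out $\mathcal{F}_\a(e_{-\a}g)(m)=A_\a^n e_\a(m)\int_{\mathbf{T}^n_\a}g(x)e^{-2\pi i(m\cdot x)\csc\a}\,dx$ and substituting $x\mapsto x+h$ with $h=\frac{\sin\a}{2|m_j|}e_j$ (chosen so that $e^{-2\pi i(m\cdot h)\csc\a}=e^{-\pi i\,\mathrm{sgn}(m_j)}=-1$), one gets
$$\mathcal{F}_\a(e_{-\a}g)(m)=-A_\a^n e_\a(m)\int_{\mathbf{T}^n_\a}g(x+h)e^{-2\pi i(m\cdot x)\csc\a}\,dx,$$
so averaging the two expressions yields
$$|\mathcal{F}_\a(e_{-\a}g)(m)|\leq\frac{|A_\a|^n}{2}\int_{\mathbf{T}^n_\a}|g(x)-g(x+h)|\,dx\leq\frac{|A_\a|^n}{2}|\sin\a|^n\,\|g\|_{\dot\Lambda_\gamma}|h|^\gamma.$$
Since $|A_\a|^n=|\csc\a|^{-n/2}\cdot|\csc\a|^{\,n/2}$... more precisely $|A_\a|^2=|1-i\cot\a|=|\csc\a|$, so $|A_\a|^n|\sin\a|^n=|\sin\a|^{n/2}$, and $|h|^\gamma=(|\sin\a|/(2|m_j|))^\gamma\leq (|\sin\a|(\sqrt n)/(2|m|))^\gamma$. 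Combining with the identity from Proposition~\ref{pro1.2}(9) relating $\mathcal{F}_\a(e_{-\a}g)(m)=\mathcal{F}_\a[e_{-\alpha}\partial^{s e_j}(e_\alpha f)](m)=(2\pi i m_j\csc\a)^s\mathcal{F}_\a(f)(m)$ and again bounding $|m_j|\geq|m|/\sqrt n$ in the denominator produces a bound of the form
$$|\mathcal{F}_\a(f)(m)|\leq\frac{(\sqrt n)^{s+\gamma}}{(2\pi)^s|\csc\a|^{s}}\cdot\frac{|\sin\a|^{n/2}|\sin\a|^\gamma}{2^{\gamma+1}}\cdot\frac{\max_{|\beta|=s}\|\partial^\beta(e_\a f)\|_{\dot\Lambda_\gamma}}{|m|^{s+\gamma}},$$
and I would then reconcile the powers of $|\sin\a|=|\csc\a|^{-1}$ with the stated exponent $|\csc\a|^{s+\gamma+n-1}$ in \eqref{eq3.6} (the exponent bookkeeping is where I expect the normalization constants from $A_\a$, the $\csc\a$ inside the exponential, and the measure of $\mathbf{T}^n_\a$ to all have to be tracked carefully).

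The main obstacle is precisely this constant/normalization bookkeeping in part (b): one must keep straight three separate appearances of $\csc\a$ — the factor $A_\a^n$ with $|A_\a|^n=|\csc\a|^{n/2}$, the dilation $\csc\a$ inside $e^{-2\pi i(m\cdot x)\csc\a}$ which rescales the translation step $h$, and the volume $|\sin\a|^n$ of $\mathbf{T}^n_\a$ that appears when integrating the Lipschitz bound — and verify they assemble into the single power $|\csc\a|^{-(s+\gamma+n-1)}$ claimed. The analytic content (translation trick plus the derivative identity) is routine; getting \eqref{eq3.6} with exactly the stated constant is the delicate part. A secondary point to handle is the requirement $m\neq 0$, which is needed only so that the largest coordinate $|m_j|$ is nonzero and the step $h$ is well-defined.
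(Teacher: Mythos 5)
Your proposal is correct and follows essentially the same route as the paper: part (a) is the integration-by-parts identity of Proposition \ref{pro1.2}(9) applied in the coordinate $j$ with $|m_j|$ largest, and part (b) is the same half-period translation trick combined with the $\dot{\Lambda}_\gamma$ bound and the volume $|\sin\a|^n$ of $\mathbf{T}^n_\a$. The power $|\csc\a|^{s+\gamma+n/2}$ you arrive at is exactly what the paper's own computation produces in its proof, so the mismatch with the exponent $s+\gamma+n-1$ in the displayed statement \eqref{eq3.6} is an inconsistency internal to the paper (the two agree only when $n=2$), not a gap in your argument.
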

\begin{proof}
For fixed $m \in \mathbf{Z}^n \backslash\{0\}$, there exists a $j$ such that $|m_j| = \sup_{1\leq k\leq n} |m_k|$. Therefore, $|m|\leq \sqrt n |m_j|$. For $f\in e_{-\a}C^s(\mathbf{T}^n_\a)$, integrating by parts $s$ times with respect to the variable $x_j$, we get
\begin{align}\label{eq3.1}
\nonumber\mathcal{F}_\a(f)(m)
\nonumber=&A_\a^n e_{\a}(m)\int_{\mathbf{T}^n_\a}(e_{\a}f)(x)e^{-2\pi i (m\cdot x)\csc \a}dx\\
\nonumber=&A_\a^n e_{\a}(m)\int_{\mathbf{T}^n_\a}\partial_j^s(e_{\a}f)(x)\frac{e^{-2\pi i (m\cdot x)\csc \a}}{(2\pi i m_j\csc \a)^s}dx\\
%\nonumber=&\frac{A_\a^n(-1)^s}{(-2\pi i m_j\csc \a)^s}\int_{\mathbf{T}^n_\a}e^{-\pi i t^2\cot\a}(\partial_j^s f)(t)e^{\pi i m^2\cot\a-2\pi i m\cdot t\csc \a+\pi i t^2\cot\a}dt\\
\nonumber=&\frac{1}{(2\pi i m_j\csc \a)^s}\int_{\mathbf{T}^n_\a}e_{-\a}(x)\partial_j^s (e_{\a}f)(x)K_\a(m,x)dx\\
=&\frac{1}{(2\pi i m_j\csc \a)^s}\mathcal{F}_\a [e_{-\alpha}\partial_j^s(e_{\alpha}f)](m).
\end{align}
Then
$$|\mathcal{F}_\a(f)(m)|\leq
\Big(\frac{\sqrt n}{2\pi |\csc\a|}\Big)^s\frac{\max\limits_{|\beta|=s}{|\mathcal{F}_\a [e_{-\alpha}\partial^\beta(e_{\alpha}f)](m)|}}{|m|^s}.$$

We now turn to conclusion (b). Let {$e_j =(0,...,|\sin \a|,...,0)$} be the element of the torus $\mathbf{T}^n_\a$ whose $j$th coordinate is $|\sin \a|$ and all the others are zero. By noting that $e^{\pi i}=e^{-\pi i} =-1$, we have
\begin{align*}
\int_{\mathbf{T}^n_\a}\partial_j^s(e_{\a}f)(x)e^{-2\pi i (m\cdot x)\csc \a}dx
=-\int_{\mathbf{T}^n_\a}\partial_j^s(e_{\a}f)\Big(x-\frac{e_j}{2m_j}\Big)e^{-2\pi i (m\cdot x)\csc \a}dx.
\end{align*}
Therefore,
\begin{align*}
&\int_{\mathbf{T}^n_\a}\partial_j^s\big(e_{\a}f\big)(x)e^{-2\pi i (m\cdot x)\csc \a}dx\\
&\quad=\frac{1}{2}\int_{\mathbf{T}^n_\a}\Big[\partial_j^s\big(e_{\a}f\big)(x)
-\partial_j^s\big(e_{\a}f\big)\Big(x-\frac{e_j}{2m_j}\Big)\Big]e^{-2\pi i (m\cdot x)\csc \a}dx.
\end{align*}
Notice that
\begin{align*}
\Big|\partial_j^s(e_{\a}f)(t)-\partial_j^s(e_{\a}f)\Big(t-\frac{e_j}{2m_j}\Big)\Big|
\leq\frac{\|\partial_j^s(e_{\a}f)\|_{\dot{\Lambda}_\gamma}}{(2|m_j\csc\a|)^\gamma}.
\end{align*}
This combined with (\ref{eq3.1}) leads to
\begin{align*}
\big|\mathcal{F}_\a(f)(m)\big|
=&\Big|\frac{A_\a^n e_{\a}(m)}{(2\pi i m_j\csc \a)^s}\int_{\mathbf{T}^n_\a}\partial_j^s\big(e_{\a}f\big)(x)e^{-2\pi i m\cdot x\csc \a}dx\Big|\\
%\leq&\Big(\frac{\sqrt n}{2\pi |\csc\a||m|}\Big)^s\Big|A_\a^n e^{\pi i m^2\cot\a}\int_{\mathbf{T}^n_\a}(\partial_j^s f)(t)e^{-2\pi i m\cdot t\csc \a}dt\Big|\\
%\leq&\Big(\frac{\sqrt n}{2\pi |\csc\a||m|}\Big)^s|A_\a^n| \Big|\int_{\mathbf{T}^n_\a}(\partial_j^s f)(t)e^{-2\pi i m\cdot t\csc \a}dt\Big|\\
\leq&\Big(\frac{\sqrt n}{2\pi |\csc\a||m|}\Big)^s\frac{|A_\a^n| }{2|\csc\a|^n}\frac{\|\partial_j^s(e_{\a}f)\|_{\dot{\Lambda}_\gamma}}{(2|m_j\csc\a|)^\gamma}\\
\leq&\Big(\frac{\sqrt n}{2\pi |\csc\a||m|}\Big)^s\frac{1}{2|\csc\a|^{n/2}} \Big(\frac{\sqrt n}{2 |\csc\a||m|}\Big)^\gamma{\|\partial_j^s\big(e_{\a}f\big)\|_{\dot{\Lambda}_\gamma}}\\
\leq&\frac{(\sqrt n)^{s+\gamma}}{(2\pi)^s |\csc\a|^{s+\gamma+n/2}2^{\gamma+1}}\frac{\max\limits_{|\beta|=s}{\|\partial_j^s
(e_{\a}f)\|_{\dot{\Lambda}_\gamma}}}{|m|^{s+\gamma}},
\end{align*}
which completes the proof.
\end{proof}
The following corollary is a consequence of Theorem \ref{th3.7}.
\begin{corollary}
Let $s\in \mathbf{Z}$ with $s\geq 0$.\\
(a) Suppose that $f\in e_{-\a}C^s(\mathbf{T}^n_\a)$. Then
\begin{align*}
|\mathcal{F}_\a (f)(m)|\leq c_{n,s,\a}
\frac{\max(\|f\|_1,\max_{|\beta|=s}{\|\partial^\beta (e_\a f)\|_1})}{(1+|m|)^s}
\end{align*}
(b) Suppose that $f\in e_{-\a}C^s(\mathbf{T}^n_\a)$ and whenever $|\beta|=s$, $\partial^\beta (e_{\a}f) $ are in $\dot{\Lambda}_\gamma(\mathbf{T}^n_\a)$ for some $0<\gamma<1$. Then
\begin{align*}
|\mathcal{F}_\a (f)(m)|\leq& c_{n,s,\a}
\frac{\max(\|f\|_1,\max_{|\beta|=s}{\|\partial_j^s(e_{\a}f)\|_{\dot{\Lambda}_\gamma}})}{(1+|m|)^{s+\gamma}}.
\end{align*}
\end{corollary}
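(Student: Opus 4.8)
The plan is to derive the corollary directly from Theorem \ref{th3.7} by bounding each fractional Fourier coefficient appearing on the right-hand sides of \eqref{eq3.7} and \eqref{eq3.6}, and then handling the low-frequency range $|m|$ bounded separately from the high-frequency tail. For part (a), I would start from \eqref{eq3.7}, which gives $|\mathcal{F}_\a(f)(m)|\leq (\sqrt n/(2\pi|\csc\a|))^s |m|^{-s}\max_{|\beta|=s}|\mathcal{F}_\a[e_{-\alpha}\partial^\beta(e_\alpha f)](m)|$ for $m\neq 0$. To each inner coefficient apply Proposition \ref{pro1.2}(8), namely $\sup_m|\mathcal{F}_\a(h)(m)|\leq|\csc\a|^{n/2}\|h\|_{L^1(\mathbf{T}^n_\a)}$, with $h=e_{-\alpha}\partial^\beta(e_\alpha f)$; note $\|e_{-\alpha}\partial^\beta(e_\alpha f)\|_{L^1(\mathbf{T}^n_\a)}=\|\partial^\beta(e_\alpha f)\|_{L^1(\mathbf{T}^n_\a)}$ since $|e_{-\alpha}|=1$. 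This yields the bound with $|m|^{-s}$ for $m\neq 0$. For $m=0$ and more generally to convert $|m|^{-s}$ into $(1+|m|)^{-s}$, I would use Proposition \ref{pro1.2}(7)--(8) to get $|\mathcal{F}_\a(f)(0)|\leq |\csc\a|^{n/2}\|f\|_1$ and observe that on the finite set $1\leq|m|\leq$ (any fixed threshold, here just $|m|\geq 1$ already suffices since $|m|\geq 1 \Rightarrow 1+|m|\leq 2|m|$), so $|m|^{-s}\leq 2^s(1+|m|)^{-s}$. Absorbing the constants $(\sqrt n/(2\pi|\csc\a|))^s$, $2^s$, and $|\csc\a|^{n/2}$ into a single constant $c_{n,s,\a}$ and taking the max over $\|f\|_1$ and $\max_{|\beta|=s}\|\partial^\beta(e_\alpha f)\|_1$ gives the claimed estimate for all $m\in\mathbf{Z}^n$.

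For part (b), the argument is parallel but uses \eqref{eq3.6} instead, which already provides decay $|m|^{-(s+\gamma)}$ for $m\neq 0$ in terms of $\max_{|\beta|=s}\|\partial^\beta(e_\alpha f)\|_{\dot\Lambda_\gamma}$. Here the only additional point is the $m=0$ term, which is not covered by \eqref{eq3.6}; I would again bound it by $|\mathcal{F}_\a(f)(0)|\leq|\csc\a|^{n/2}\|f\|_1$ via Proposition \ref{pro1.2}(7)--(8), and then use $1+|m|\leq 2|m|$ for $m\neq 0$ to pass from $|m|^{-(s+\gamma)}$ to $(1+|m|)^{-(s+\gamma)}$. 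Combining the two cases and enlarging the constant to a common $c_{n,s,\a}$ (depending also on $\gamma$, but that can be absorbed or the dependence made explicit) while taking the maximum of the two norms produces the stated bound.

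I do not expect a genuine obstacle here: the corollary is essentially a bookkeeping exercise combining the pointwise decay estimates of Theorem \ref{th3.7} with the trivial $L^1$--$\ell^\infty$ bound of Proposition \ref{pro1.2}(8) and the elementary inequality $1+|m|\leq 2|m|$ on $\mathbf{Z}^n\setminus\{0\}$. The only mild subtlety worth stating carefully is that one must treat $m=0$ via the direct estimate $|\mathcal{F}_\a(f)(0)|\leq|\csc\a|^{n/2}\|f\|_1$ rather than through Theorem \ref{th3.7}, since both \eqref{eq3.7} and \eqref{eq3.6} have a $|m|^{-s}$ or $|m|^{-(s+\gamma)}$ factor that is singular at the origin; including $\|f\|_1$ inside the maximum on the right-hand side is precisely what makes the unified statement valid at $m=0$. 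Everything else is constant-chasing, so I would present the proof compactly, writing ``the claim follows from Theorem \ref{th3.7}, Proposition \ref{pro1.2}(7)--(8), and the estimate $(1+|m|)\leq 2|m|$ for $m\neq 0$'' and then displaying the one-line combination of inequalities.
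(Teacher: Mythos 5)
Your proof is correct and is exactly the argument the paper intends: the paper states the corollary without proof, merely declaring it ``a consequence of Theorem \ref{th3.7}'', and your filling-in (the $L^1$ bound of Proposition \ref{pro1.2}(8) applied to $e_{-\a}\partial^\beta(e_\a f)$, the separate treatment of $m=0$ via $|\mathcal{F}_\a(f)(0)|\leq|\csc\a|^{n/2}\|f\|_1$, and $1+|m|\leq 2|m|$ for $m\neq 0$) is the standard and evidently intended route. No gaps.
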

The following proposition provides a partial converse to Theorem \ref{th3.7}. We
denote below by $[[s]]$ the largest integer strictly less than a given real number s. Then $[[s]]$ is equal to the integer part $[s]$ of $s$, unless $s$ is an integer, in which case $[[s]] = [s]-1$.
\begin{proposition}
Suppose $s > 0$. If $f\in e_{-\a}L^1(\mathbf{T}^n_\a)$ and satisfies
{\begin{align}\label{eq3.11}
|\mathcal{F}_\a(f)(m)|\leq C(1+|m|)^{-s-n}
\end{align}}
for all $m \in\mathbf{ Z}^n$. Then $e_{\a}f$ has partial derivatives of all orders $|\beta|\leq [[s]]$. Moreover, $\partial^\beta (e_{\a}f)\in {\dot{\Lambda}_\gamma}$ for all multi-indices $\beta$ satisfying $|\beta| =[[s]]$ with $0 <\gamma<s-[[s]]$.
\end{proposition}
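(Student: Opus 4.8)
The plan is to use the fractional Fourier inversion to pass to an ordinary (rescaled) Fourier series and then run the classical dyadic‑splitting argument for the converse direction. Since $s>0$, the hypothesis $|\mathcal{F}_\a(f)(m)|\le C(1+|m|)^{-s-n}$ forces $\sum_{m\in\mathbf{Z}^n}|\mathcal{F}_\a(f)(m)|<\infty$, so Corollary \ref{cor3.3} applies: $f$ equals a.e. the continuous function $\sum_{m\in\mathbf{Z}^n}\mathcal{F}_\a(f)(m)K_{-\a}(m,x)$. Multiplying by $e_\a(x)$, using $e_\a(x)e_{-\a}(x)=1$ and the explicit form of $K_{-\a}$ so that $e_\a(x)K_{-\a}(m,x)=A_{-\a}^n e_{-\a}(m)e^{2\pi i(m\cdot x)\csc\a}$, I would write $g:=e_\a f$ as
\begin{align*}
g(x)=\sum_{m\in\mathbf{Z}^n}b_m\,e^{2\pi i(m\cdot x)\csc\a},\qquad b_m:=A_{-\a}^n\,e_{-\a}(m)\,\mathcal{F}_\a(f)(m),
\end{align*}
where, since $|A_{-\a}|^n=|\csc\a|^{n/2}$, one has $|b_m|=|\csc\a|^{n/2}|\mathcal{F}_\a(f)(m)|\le C'(1+|m|)^{-s-n}$. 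Each exponential here is $|\sin\a|$-periodic in every coordinate, so $g$ is a genuine function on $\mathbf{T}^n_\a$, and the problem reduces to a statement about the ordinary Fourier series $\sum_m b_m e^{2\pi i(m\cdot x)\csc\a}$, which I prove directly below.

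For a multi-index $\beta$ with $|\beta|\le[[s]]$ the formally differentiated series $\sum_m b_m(2\pi i\csc\a)^{|\beta|}m^\beta e^{2\pi i(m\cdot x)\csc\a}$ has general term bounded by $C'(2\pi|\csc\a|)^{[[s]]}(1+|m|)^{[[s]]-s-n}$; since $[[s]]<s$ by definition, the exponent is strictly less than $-n$, so the series converges absolutely and uniformly on $\mathbf{T}^n_\a$. Applying the standard theorem on termwise differentiation of uniformly convergent series, once for each unit increment of the multi-index, yields $g\in C^{[[s]]}(\mathbf{T}^n_\a)$ together with $\partial^\beta g(x)=\sum_m b_m(2\pi i\csc\a)^{|\beta|}m^\beta e^{2\pi i(m\cdot x)\csc\a}$ for every $|\beta|\le[[s]]$; this establishes the first assertion.

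For the Lipschitz claim, fix $\beta$ with $|\beta|=[[s]]$, put $G:=\partial^\beta g$ and $c_m:=b_m(2\pi i\csc\a)^{[[s]]}m^\beta$, so that $|c_m|\le C''(1+|m|)^{[[s]]-s-n}$. For $x,h\in\mathbf{T}^n_\a$ with $0<|h|\le 1$ (values $|h|$ bounded below being trivial since $G$ is bounded), I would estimate
\begin{align*}
|G(x+h)-G(x)|\le\sum_{m\in\mathbf{Z}^n}|c_m|\,\bigl|e^{2\pi i(m\cdot h)\csc\a}-1\bigr|
\end{align*}
by splitting at $|m|=1/|h|$: for $|m|\le 1/|h|$ use $|e^{i\theta}-1|\le|\theta|$, which bounds $|e^{2\pi i(m\cdot h)\csc\a}-1|$ by $2\pi|\csc\a||m||h|$; for $|m|>1/|h|$ use $|e^{i\theta}-1|\le 2$. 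Since $0<s-[[s]]\le 1$ we have $[[s]]-s-n+1\ge-n$, and the elementary lattice-point estimates $\sum_{|m|\le R}(1+|m|)^{a}\lesssim R^{a+n}$ for $a>-n$ (with an extra $\log(2R)$ when $a=-n$) and $\sum_{|m|>R}(1+|m|)^{a}\lesssim R^{a+n}$ for $a<-n$ give that the first sum is $\lesssim|h|\cdot|h|^{-(1-(s-[[s]]))}=|h|^{s-[[s]]}$ and the second is $\lesssim|h|^{s-[[s]]}$, whence $|G(x+h)-G(x)|\lesssim|h|^{s-[[s]]}$. Because $\mathbf{T}^n_\a$ is bounded and $0<\gamma<s-[[s]]$, this is $\lesssim|h|^{\gamma}$ uniformly, i.e. $\partial^\beta(e_\a f)\in\dot{\Lambda}_\gamma(\mathbf{T}^n_\a)$. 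The only delicate point is the bookkeeping of the exponent $[[s]]-s-n+1$ in the first sum: it equals $-n$ exactly when $s$ is an integer, which forces the logarithmic factor $\log(2/|h|)$; this is still $\lesssim|h|^{\gamma}$ for $\gamma<1=s-[[s]]$, and it is precisely why one must restrict to $\gamma<s-[[s]]$ rather than allowing $\gamma=s-[[s]]$.
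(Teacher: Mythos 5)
Your argument is correct, and the first half (smoothness up to order $[[s]]$) is essentially the paper's proof: both reduce, via Corollary \ref{cor3.3} and the identity $e_\a(x)K_{-\a}(m,x)=A_{-\a}^n e_{-\a}(m)e^{2\pi i (m\cdot x)\csc\a}$, to termwise differentiation of an absolutely and uniformly convergent series whose $\beta$-differentiated terms are $O\big((1+|m|)^{[[s]]-s-n}\big)$. Where you genuinely diverge is the H\"older estimate. The paper never splits the sum: it bounds $|e^{2\pi i(m\cdot h)\csc\a}-1|\leq\min\big(2,\,2\pi|m||h||\csc\a|\big)\leq 2^{1-\gamma}(2\pi|m||h||\csc\a|)^{\gamma}$ by interpolating the two elementary bounds, pulls out $|h|^\gamma$, and observes that $\sum_m |m|^{[[s]]+\gamma}(1+|m|)^{-s-n}$ converges precisely because $\gamma<s-[[s]]$; this yields the $\dot\Lambda_\gamma$ bound in one line with an explicit constant. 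You instead split at $|m|=1/|h|$ and use the lattice-point estimates, obtaining the sharper modulus of continuity $|h|^{s-[[s]]}$ (with an extra $\log(2/|h|)$ exactly when $s\in\mathbf{Z}$), from which membership in $\dot\Lambda_\gamma$ for every $\gamma<s-[[s]]$ follows a fortiori. Your route costs a little more bookkeeping (the borderline exponent $a=-n$ when $s$ is an integer, which you handle correctly) but buys the near-optimal smoothness statement and makes transparent why the endpoint $\gamma=s-[[s]]$ must be excluded; the paper's interpolation trick is shorter but gives no information beyond the stated conclusion. Both are complete proofs.
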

\bp
Using (\ref{eq3.11}) and Corollary \ref{cor3.3}, we have
\begin{align}\label{eq3.10}
f(x)=\sum_{ m\in \mathbf{Z}^n}\mathcal{F}_\a (f)(m) K_{-\a}(m,x)
\end{align}
for almost all $x\in \mathbf{T}^n_\a$. Let $f_m(x)=\mathcal{F}_\a (f)(m) K_{-\a}(m,x)$. If a series $g=\sum_m g_m$ satisfies
$\sum_m \|\partial^\beta g_m\|_\infty<\infty$ for all $|\beta|\leq M$, we obtain $g\in C^M$ and $\partial^\beta g=\sum_m \partial^\beta g_m$. Then, we need to check
$$\sum_{ m\in \mathbf{Z}^n}|\partial^\beta(e_{\a}f_m)(x)|<\infty.$$
Notice that
\begin{align*}
\partial^\beta(e_{\a}f_m)(x)%=&\partial^\beta\big[\mathcal{F}_\a (f)(m)e_{\a}(x^2) K_{-\a}(m,x)\big]\\
%=&\mathcal{F}_\a (\mathcal{M}_{-\a}f)(m)\partial^\beta\big[ A_\a^n e^{-\pi i m^2\cot\a}e^{2\pi i m\cdot x\csc\a}\big]\\
%=&A_{-\a}^ne_{\a}(m^2)\mathcal{F}_\a(f)(m)\partial^\beta\big[e^{2\pi i (m\cdot x)\csc\a}\big]\\
=&A_{-\a}^ne_{-\a}(m)\mathcal{F}_\a(f)(m)(2\pi i m\csc\a)^\beta e^{2\pi i (m\cdot x)\csc\a}.
\end{align*}
For $|\beta|\leq [[s]]$, by $(\ref{eq3.11})$, we get
\begin{align*}
\sum_{ m\in \mathbf{Z}^n}\big|\partial^\beta(e_{\a}f_m)(x)\big|
%=&\sum_{ m\in \mathbf{Z}^n}\big|\mathcal{F}_\a(f)(m)A_{-\a}^ne_{\a}(m^2)(2\pi i m\csc\a)^\beta e^{2\pi i (m\cdot x)\csc\a}\big|\\
\leq & |A_{-\a}^n |\sum_{ m\in \mathbf{Z}^n}|\mathcal{F}_\a(f)(m)|
\sup_{x\in \mathbf{T}^n_\a}|(2\pi i m\csc\a)^\beta e_{-\a}(m,x)|
< \infty.
\end{align*}
Hence $e_{\a}f\in C^{[[s]]}(\mathbf{T}^n_\a)$, and
\begin{align*}
\partial^\beta (e_{\a}f)(x)
=& e_{\a}(x)\sum_{ m\in \mathbf{Z}^n}\mathcal{F}_\a(f)(m)(2\pi i m\csc\a)^\beta K_{-\a}(m,x).
\end{align*}
For $|\beta|=[[s]]$ and $0<\gamma<s-[[s]]$, we get
\begin{align*}
|e^{2\pi i (m\cdot h)\csc\a}-1|\leq \min(2,2\pi |m||h||\csc\a|)\leq 2^{1-\gamma}(2\pi)^\gamma|m|^\gamma|h|^\gamma|\csc\a|^\gamma.
\end{align*}
This together with (\ref{eq3.11}) implies
\begin{align*}
&\big|\partial^\beta (e_{\a}f)(x+h)-\partial^\beta (e_{\a}f)(x)\big|\\
=&\Big|A_{-\a}^n \sum_{ m\in \mathbf{Z}^n}\mathcal{F}_\a(f)(m)e_{- \a}(m)  e_{-\a}(m,x)\big(e^{2\pi i (m\cdot h)\csc\a}-1\big)(2\pi i m\csc\a)^\beta\Big|\\
%\leq&\sum_{ m\in Z^n}\Big|\mathcal{F}_\a (\mathcal{M}_{-\a}f)(m)\Big||A_\a e^{-\pi i m^2\cot\a}||(2\pi i m\csc\a)^\beta ||e^{2\pi i mx\csc\a}||e^{2\pi i mh\csc\a}-1|\\
\leq&\sum_{ m\in \mathbf{Z}^n} \frac{C|\csc\a|^{n/2}}{(1+|m|)^{s+n}} (2\pi)^{[[s]]} |m|^{[[s]]}|\csc\a|^{[[s]]}\big|e^{2\pi i (m\cdot h)\csc\a}-1\big|
\end{align*}
\begin{align*}
\leq&\sum_{ m\in \mathbf{Z}^n} \frac{C|\csc\a|^{n/2}}{(1+|m|)^{s+n}}(2\pi)^{[[s]]} |m|^{[[s]]}|\csc\a|^{[[s]]}2^{1-\gamma}(2\pi)^\gamma|m|^\gamma|h|^\gamma|\csc\a|^\gamma\\
\leq&2^{1-\gamma}|h|^\gamma(2\pi)^{[[s]]+\gamma}|\csc\a|^{[[s]]+n/2+\gamma}\sum_{ m\in \mathbf{Z}^n} \frac{C|m|^{[[s]]+\gamma}}{(1+|m|)^{s+n}}\\
\leq&2^{1-\gamma}|h|^\gamma(2\pi)^{s}|\csc\a|^{[[s]]+n/2+\gamma}\sum_{ m\in \mathbf{Z}^n} \frac{C|m|^{[[s]]+\gamma}}{(1+|m|)^{s+n}}\\
%\leq&2^{1-\gamma}|h|^\gamma(2\pi)^{s}|\csc\a|^{s+1}\sum_{ m\in Z^n} \frac{C}{(1+|m|)^{s+n-[[s]]-\gamma}}\\
\leq&C2^{1-\gamma}(2\pi)^{s}|\csc\a|^{[[s]]+n/2+\gamma}|h|^\gamma.
\end{align*}
Therefore, $\partial^\beta (e_{\a}f)\in {\dot{\Lambda}_\gamma}$, and the proof is complete.
\ep
\section{Convergence of fractional Fourier series}
As we know, the convergence of Fourier series and the boundedness of singular integral and related operators have always been
the core subjects of harmonic analysis, see \cite{FHLL, FLPS, LFFY, WF, YFS1} and references therein.
In this section, we will discuss the pointwise convergence of fractional Fej\'{e}r means. For the convergence of other means, such as Bochner-Riesz means, one can refer to \cite{Bochner, FZ} and the references therein.
\subsection{Pointwise Convergence of the fractional Fej\'{e}r Means}\label{subsection5.1}
Using properties of the fractional Fej\'{e}r kernel, we obtain the following 1-dimensional result regarding the convergence of the fractional Fej\'{e}r means:
\begin{theorem}
If $f\in e_{-\a}L^1(\mathbf{T}^1_\a)$ has both left and right limits at a point $x_0$, denoted by $f(x_{0-})$ and $f(x_{0+})$, respectively, then
\begin{align}\label{eq3.16}
(f\overset{\alpha}{\ast}{{F}}^1_N)(x_0)\to \frac{1}{2}\big(f(x_{0+})+f(x_{0-})\big)\quad as \quad N\to \infty.
\end{align}
\end{theorem}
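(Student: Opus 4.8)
The plan is to reduce the statement to the corresponding classical fact about the ordinary Fej\'{e}r kernel on $\mathbf{T}^1$ via the change of variables $x = t\csc\a$, and then to carry out the standard splitting-of-the-integral argument adapted to the fractional convolution. First I would write out $(f\overset{\alpha}{\ast}{{F}}^1_N)(x_0)$ using Definition \ref{def1}: it equals $|\csc\a|e_{-\a}(x_0)\int_{\mathbf{T}^1_\a}e_{\a}(y)f(y){{F}}^1_N\big((x_0-y)\csc\a\big)\,dy$, and by Proposition \ref{pro1.3} this is $e_{-\a}(x_0)\int_{\mathbf{T}^1_\a}(e_\a f)(y)\,{{F}}^{1,\a}_N(x_0-y)\,dy$. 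Since ${{F}}^{1,\a}_N$ is even (it is a function of $\sin^2(\pi(\cdot)\csc\a)$) and integrates to $1$ over $\mathbf{T}^1_\a$ (Proposition \ref{pro2.4}), I would split the mass into the two half-intervals: writing $g = e_\a f$, which has left/right limits $g(x_{0\mp}) = e_\a(x_0)f(x_{0\mp})$ at $x_0$ because $e_\a$ is continuous, I get
\begin{align*}
(f\overset{\alpha}{\ast}{{F}}^1_N)(x_0) - \tfrac{1}{2}\big(f(x_{0+})+f(x_{0-})\big)
= e_{-\a}(x_0)\int_0^{|\sin\a|/2}\big[g(x_0-t)-g(x_{0-})\big]{{F}}^{1,\a}_N(t)\,dt \\
{}+ e_{-\a}(x_0)\int_0^{|\sin\a|/2}\big[g(x_0+t)-g(x_{0+})\big]{{F}}^{1,\a}_N(t)\,dt,
\end{align*}
where I have used evenness of ${{F}}^{1,\a}_N$ and $\int_0^{|\sin\a|/2}{{F}}^{1,\a}_N = \tfrac12$, together with $|e_{-\a}(x_0)|=1$.

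Next I would estimate each of the two integrals in the same way; take the first. Given $\v>0$, choose $\d>0$ so small that $|g(x_0-t)-g(x_{0-})|<\v$ for $0<t\le\d$, which is possible by the existence of the left limit. Split $\int_0^{|\sin\a|/2} = \int_0^\d + \int_\d^{|\sin\a|/2}$. On $[0,\d]$ the integrand is bounded by $\v\,{{F}}^{1,\a}_N(t)$, and since $\int_0^\d{{F}}^{1,\a}_N\le\tfrac12$ this contributes at most $\v/2$. On $[\d,|\sin\a|/2]$ I would bound $|g(x_0-t)-g(x_{0-})|$ by $\|g\|_{L^\infty\text{ near }x_0}$ — more safely, since $g\in L^1$ only, I would instead use the pointwise kernel bound ${{F}}^{1,\a}_N(t)\le \frac{|\csc\a|}{N+1}\frac{1}{|\sin(\pi t\csc\a)|^2}\le \frac{C_\a}{(N+1)\d^2}$ uniformly for $t\in[\d,|\sin\a|/2]$ (this is exactly the estimate used in the proof of Proposition \ref{pro2.4}), so the tail is at most $\frac{C_\a}{(N+1)\d^2}\int_\d^{|\sin\a|/2}|g(x_0-t)-g(x_{0-})|\,dt \le \frac{C_\a'}{(N+1)\d^2}\big(\|g\|_{L^1(\mathbf{T}^1_\a)}+|g(x_{0-})|\big)\to 0$ as $N\to\infty$. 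Hence $\limsup_N$ of the first integral is $\le\v/2$, and similarly for the second; letting $\v\to0$ gives the claim.

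The main technical obstacle is the tail estimate over $[\d,|\sin\a|/2]$: because $f$ (equivalently $g$) is merely integrable, not bounded, one cannot simply pull a sup out of the integral, so I must keep $|g(x_0-t)-g(x_{0-})|$ inside and exploit that the kernel is uniformly $O(1/N)$ away from the origin — this is where Proposition \ref{pro2.4}'s pointwise bound for ${{F}}^{1,\a}_N$ does the real work. A minor point to handle carefully is the bookkeeping with the modulating factors $e_{\pm\a}$: one must verify $g=e_\a f$ inherits the one-sided limits from $f$ (immediate from continuity of $e_\a$) and that the outer factor $e_{-\a}(x_0)$ has modulus one so it does not affect any of the $\v$-estimates. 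Everything else is a routine repetition of the argument already used to prove Theorem \ref{th2.5} and Proposition \ref{pro2.4}.
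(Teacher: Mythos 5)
Your proposal is correct and follows essentially the same route as the paper's proof: split off a $\delta$-neighborhood of the origin where the one-sided limits control the integrand, and kill the tail using that $\sup_{\delta\le |t|\le |\sin\a|/2}{{F}}^{1,\a}_N(t)=O(1/(N+1))$ together with the $L^1$ bound on $e_\a f$ (the paper merely symmetrizes the two one-sided integrals into a single integral over $[0,|\sin\a|/2]$ before splitting, which is cosmetically different from your two-integral bookkeeping but mathematically identical). Your explicit remark that one must keep $|g(x_0-t)-g(x_{0-})|$ inside the tail integral because $g$ is only integrable is exactly the point the paper's estimate of $I_4$ implements.
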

\bp
Set $\mathbf{T}^1_\a=[-{|\sin\a|}/{2},{|\sin\a|}/{2}]$. For fixed $\v>0$,  take $\d\in (0,{|\sin\a|}/{2})$ such that
\begin{align}\label{eq3.17}
\Big|\frac{(e_{\a}f)(x_{0}+t)+(e_{\a}f)(x_{0}-t)}{2}
-\frac{e_{\a}(x_0)f(x_{0+})+e_{\a}(x_0)f(x_{0-})}{2}\Big|<\frac{\v}{2},
\end{align}
where $0<t<\d$. Follows form Proposition \ref{pro2.4}, there exists $N>N_0$ such that
\begin{align}\label{eq3.18}
\sup_{\d\leq t\leq {|\sin\a|/2}}{{F}}_N^{1,\a}(t)<\frac{\v}{2},
\end{align}
Meanwhile,
\begin{align*}
&(f\overset{\alpha}{\ast}{{F}}_N^1)(x_0)-f(x_{0+})=e_{-\a}(x_0)\int_{\mathbf{T}^1_\a}
\big[(e_{\a}f)(x_0+t)-e_{\a}(x_0)f(x_{0+})\big]{{F}}_N^{1,\a}(t)dt,\\
&(f\overset{\alpha}{\ast}{{F}}_N^1)(x_0)-f(x_{0-})=e_{-\a}(x_0)\int_{\mathbf{T}^1_\a}
\big[(e_{\a}f)(x_0-t)-e_{\a}(x_0)f(x_{0-})\big]{{F}}_N^{1,\a}(t)dt.
\end{align*}
Using the fact that the integrand is even, we have
\begin{align}\label{eq5.4}
\nonumber(f\overset{\alpha}{\ast}{{F}}_N^1)(x_0)- \frac{f(x_{0+})+f(x_{0-})}{2}
&=2\int_{0}^{\frac{|\sin\a|}{2}}\Big[\frac{(e_{\a}f)(x_0+t)+(e_{\a}f)(x_0-t)}{2}
\\
&\quad\quad-\frac{e_{\a}(x_0)f(x_{0+})+e_{\a}(x_0)f(x_{0-})}{2}\Big]{{F}}_N^{1,\a}(t)
dt.
\end{align}
We divided the integral in (\ref{eq5.4}) into two parts, the integral over $[0,\delta)$ and the integral over $(\delta,{|\sin\a|}/{2})$, which are denoted by $I_3$ and $I_4$, respectively. Using (\ref{eq3.17}), we get $I_3\leq \v$. For $N\geq N_0$, by (\ref{eq3.18}), we obtain
\begin{align*}
I_4\leq\v\big(\|e_{\a}f-e_{\a}(x_0)f(x_{0_+})\|_{L^1}+\|e_{\a}f-e_{\a}(x_0)f(x_{0_-})\|_{L^1}\big)
&=:\v c(f,x_0,\a),
\end{align*}
where $c(f,x_0,\a)$ is a constant depending on $f$, $x_0$ and $\a$. This completes the proof.
\ep
\subsection{Almost everywhere convergence of the fractional Fej\'{e}r Means}\label{subsection5.2}

\begin{theorem}\label{th5.2}Suppose $f\in e_{-\a}L^1(\mathbf{T}^n_\a)$.\\
(a) Let
$$\mathcal{{H}}_{\a}f:=\sup_{N\in \mathbf{Z}^+}|f\overset{\alpha}{\ast}{{F}}^n_N|.$$
Then $\mathcal{{H}}_{\a}$ maps $e_{-\a}L^1(\mathbf{T}^n_\a)$ to $L^{1,\infty}(\mathbf{T}^n_\a)$ and $e_{-\a}L^p(\mathbf{T}^n_\a)$ to $L^p(\mathbf{T}^n_\a)$ for $1 < p \leq \infty$.\\
(b) For any function $f\in e_{-\a}L^1(\mathbf{T}^n_\a)$, we get that as $N\to \infty$
\begin{align*}
f\overset{\alpha}{\ast}{{F}}^n_N \to f \quad a.e.
\end{align*}
\end{theorem}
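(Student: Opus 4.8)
The plan is to reduce everything to the classical Fej\'er-maximal-function theory on the ordinary torus $\mathbf{T}^n$ via the substitution $f = e_{-\a}g$ with $g \in L^p(\mathbf{T}^n_\a)$, and a rescaling by $\csc\a$ that identifies $\mathbf{T}^n_\a$ with $\mathbf{T}^n$. Concretely, I would first unwind the definition of the fractional convolution: from the computation preceding Definition~\ref{def3.1} we have
\begin{align*}
(f\overset{\alpha}{\ast}{{F}}^n_N)(x) = e_{-\a}(x)\int_{\mathbf{T}^n_\a}(e_\a f)(x-t)\,{{F}}^{n,\a}_N(t)\,dt = e_{-\a}(x)\,\big((e_\a f)*_{\mathbf{T}^n_\a}{{F}}^{n,\a}_N\big)(x),
\end{align*}
where $*_{\mathbf{T}^n_\a}$ is ordinary convolution on the group $\mathbf{T}^n_\a$. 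Since $|e_{-\a}(x)|=1$, taking absolute values and the supremum over $N$ gives $\mathcal{H}_\a f(x) = \sup_{N}\big|\big((e_\a f)*_{\mathbf{T}^n_\a}{{F}}^{n,\a}_N\big)(x)\big|$, i.e. $\mathcal{H}_\a f$ is exactly the Fej\'er maximal function of $e_\a f$ on $\mathbf{T}^n_\a$, and $\|\mathcal{H}_\a f\|_{L^q} = \|$Fej\'er-max of $e_\a f\|_{L^q}$, while $\|f\|_{L^p(\mathbf{T}^n_\a)} = \|e_\a f\|_{L^p(\mathbf{T}^n_\a)}$ since $e_\a f = g$. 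So part (a) is equivalent to the statement that the Fej\'er maximal operator on $\mathbf{T}^n_\a$ is weak $(1,1)$ and strong $(p,p)$ for $1<p\le\infty$.

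Next I would establish that last statement. The group $\mathbf{T}^n_\a = (\mathbb{R}/|\sin\a|\mathbb{Z})^n$ is, up to the dilation $x\mapsto x\csc\a$, the same as $\mathbf{T}^n$, and under this dilation $|\csc\a|^n{{F}}^n_N(x\csc\a) = {{F}}^{n,\a}_N(x)$ (this is precisely how ${{F}}^{1,\a}_N$ was defined, extended to $n$ dimensions by tensor product). Hence the Fej\'er means on $\mathbf{T}^n_\a$ transfer to the ordinary Fej\'er means on $\mathbf{T}^n$, and the maximal function transfers accordingly with the natural $L^p$ scaling. Therefore it suffices to invoke the classical result (see \cite{G}) that on $\mathbf{T}^n$ the Fej\'er maximal function is controlled pointwise by the Hardy--Littlewood maximal function — because $\{{{F}}^{n,\a}_N\}$ is an approximate identity (Proposition~\ref{pro2.4}) whose members are dominated by a radially decreasing $L^1$ majorant uniformly in $N$, as is visible from the bound ${F}^{1,\a}_N(x)\le \frac{|\csc\a|}{N+1}\frac{\pi^2}{4}\min(N+1,|\pi x\csc\a|^{-1})^2$ derived in the proof of Proposition~\ref{pro2.4}, which after rescaling is the standard $\min(N+1,\,c/|x|)^2/(N+1)$ estimate. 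The Hardy--Littlewood maximal operator is weak $(1,1)$ and strong $(p,p)$ for $1<p\le\infty$, and these bounds pass to $\mathcal{H}_\a$ through the isometric-up-to-dilation correspondence; this proves (a).

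For part (b), I would run the standard density argument. Let $f = e_{-\a}g$, $g\in L^1(\mathbf{T}^n_\a)$. By Proposition~\ref{pro1.5} the trigonometric polynomials of order $\a$ are dense in $e_{-\a}L^1(\mathbf{T}^n_\a)$, and for such a polynomial $P_\a$ one has exact convergence $P_\a\overset{\alpha}{\ast}{{F}}^n_N \to P_\a$ everywhere (indeed ${F}^n_N$ reproduces all low-frequency coefficients for $N$ large, by the explicit formula in Definition~\ref{def3.1} together with Theorem~\ref{th2.5}). Given $\v>0$ pick $P_\a$ with $\|f-P_\a\|_{L^1(\mathbf{T}^n_\a)}<\v$, and estimate
\begin{align*}
\limsup_{N\to\infty}\big|f\overset{\alpha}{\ast}{{F}}^n_N - f\big| \le \mathcal{H}_\a(f-P_\a) + \limsup_{N\to\infty}\big|P_\a\overset{\alpha}{\ast}{{F}}^n_N - P_\a\big| + |f-P_\a| = \mathcal{H}_\a(f-P_\a) + |f-P_\a|
\end{align*}
pointwise. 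Both terms on the right are bounded on any set $\{\limsup > \lambda\}$ by combining the weak $(1,1)$ bound from (a) with Chebyshev's inequality: $\big|\{\limsup_N|f\overset{\alpha}{\ast}{{F}}^n_N-f|>\lambda\}\big| \le \frac{C}{\lambda}\|f-P_\a\|_1 + \frac1\lambda\|f-P_\a\|_1 \le \frac{(C+1)\v}{\lambda}$. Letting $\v\to0$ shows the bad set has measure zero for every $\lambda>0$, hence $f\overset{\alpha}{\ast}{{F}}^n_N\to f$ a.e.

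The main obstacle is part (a): one must verify carefully that the fractional Fej\'er kernels, after the $\csc\a$-rescaling, satisfy a uniform-in-$N$ radially-decreasing $L^1$ domination so that the maximal function is pointwise majorized by the Hardy--Littlewood maximal operator. The kernel estimate needed is already essentially present in the proof of Proposition~\ref{pro2.4}; the remaining work is the bookkeeping of the dilation and the factor $|\csc\a|^n$ to confirm that the classical transference applies cleanly, and then part (b) is routine.
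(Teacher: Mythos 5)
Your overall architecture is the same as the paper's: unwind the fractional convolution to an ordinary convolution of $e_\a f$ against ${F}^{n,\a}_N$, dominate the kernel uniformly in $N$ by a fixed family of dilates of an integrable profile to control $\mathcal{H}_\a$, and then deduce (b) from the weak $(1,1)$ bound plus a dense class (the paper uses $e_{-\a}C^\infty$ and cites Theorem 2.1.14 of \cite{G}; your trigonometric polynomials work just as well). Part (b) as you wrote it is fine modulo part (a).

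The genuine gap is in your justification of part (a) for $n\geq 2$. You claim the fractional Fej\'er kernels are ``dominated by a radially decreasing $L^1$ majorant uniformly in $N$,'' so that $\mathcal{H}_\a$ is pointwise controlled by the Hardy--Littlewood maximal function. This is true for $n=1$, where ${F}^{1,\a}_N(x)\lesssim_\a (N+1)/(1+(N+1)^2|x|^2)$, but it fails for the tensor-product kernel in higher dimensions: along a coordinate axis one has ${F}^{n}_N(x_1,0,\dots,0)={F}^{1}_N(x_1)(N+1)^{n-1}$, and the least radially decreasing majorant of the profile $\prod_j(1+x_j^2)^{-1}$ already satisfies $\Psi(r)\geq (1+r^2)^{-1}$, which is not integrable against $r^{n-1}\,dr$ for $n\geq 2$; after dilation by $\varepsilon=(N+1)^{-1}$ this non-integrability at infinity is compressed into the cube, so there is no uniform-in-$N$ radial integrable majorant and the standard ``radial majorant $\Rightarrow M f$'' theorem does not apply. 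This is exactly why the paper instead dominates ${F}^{n,\a}_N$ by $\pi^{2n}|\csc\a|^n\,\Phi_\varepsilon$ with $\Phi(x)=\prod_j(1+x_j^2)^{-1}$, periodizes to $\mathbb{R}^n$ via $f_0\chi_Q$, and invokes Lemma 3.4.5 of \cite{G}, which proves the weak $(1,1)$ bound for $\sup_{\varepsilon>0}|h|\ast\Phi_\varepsilon$ by an argument specific to this product kernel rather than by Hardy--Littlewood domination. To close your proof you should replace the radial-majorant step by this product-kernel maximal lemma (or restrict the claim to $n=1$); the $L^\infty$ bound and Marcinkiewicz interpolation then give the $L^p$ statement as in the paper.
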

\bp
Using the fact that $1\leq \frac{|t|}{|\sin t|}\leq \frac{\pi}{2}$ when $|t|\leq \frac{\pi}{2}$, we obtain
\begin{align*}
|{{F}}_N^{1,\a}(x)|=&\frac{|\csc \a|}{N+1}\Big|\frac{\sin\big((N+1)\pi x \csc\a\big)} {\sin(\pi x \csc\a)}\Big|^2\\
\leq &\frac{(N+1)|\csc \a|}{4}\Big|\frac{\sin\big((N+1)\pi x \csc\a\big)} {(N+1) x \csc\a}\Big|^2\\
\leq &\frac{(N+1)|\csc \a|}{4}\min\Big\{\pi^2, \frac{1}{(N+1)^2|x\csc\a|^2}\Big\}\\
%\leq &\frac{(N+1)|\csc \a|}{4}\min\Big\{\pi^2, \frac{1}{(N+1)^2|x|^2}\Big\}\\
\leq &\frac{\pi^2}{2}\frac{(N+1)|\csc \a|}{1+(N+1)^2|x|^2}.
\end{align*}
where $|x|\leq \frac{1}{2|\csc\a|}$. For $t \in \mathbb{R}$, set $\varphi(t)=(1+t^2)^{-1}$ and $\varphi_\varepsilon (t)=\varepsilon^{-1}\varphi(t/\varepsilon)$ for $\varepsilon>0$. For $x=(x_1,\ldots,x_n)$ and $\v>0$, let
$$\Phi(x)=\varphi(x_1)\cdots\varphi(x_n)$$
and $\Phi_\varepsilon (t)={\varepsilon^{-n}}\Phi(t/\varepsilon)$. Therefore, for $|x|\leq {|\sin\a|/2}$ we get $|{{F}}_N^{1,\a}(x)|\leq {\pi^2}|\csc \a| \varphi_\varepsilon(x)$ with $\v=(N+1)^{-1}$. Meanwhile, for $y\in [-|\sin\a|/2,{|\sin\a|/2}]^n$, we have
$$|{{F}}_N^{n,\a}(y)|\leq \pi^{2n}|\csc \a|^n\Phi_\varepsilon (y),$$
where $\v=(N+1)^{-1}$.

Now let $e_{\a}f$ be an integrable function on ${\mathbf{T}^n_\a}$ and let $f_0$ denote its periodic extension on $\rn$. For $x\in [-|\sin\a|/2,{|\sin\a|/2}]^n$, we obtain
\begin{align*}
\mathcal{{H}}_\a f(x)%=&\sup_{N\in Z^+}|e^{\pi ix^{2}\cot\alpha} f\overset{\alpha}{\ast}{\mathcal{F}}^n_N(x)|\\
%=&\sup_{N\in Z^+}\Big|\int_{\mathbf{T}^n_\a}f(t){\mathcal{F}}^n_N(x-t)\mathrm{d}t\Big|\\
=&\sup_{N\in \mathbf{Z}^+}\Big|e_{-\a}(x)\int_{\mathbf{T}^n_\a}(e_{\a}f)(x-t){{F}}^{n,\a}_N(t)\mathrm{d}t\Big|\\
\leq&\pi^{2n}|\csc \a|^n\sup_{\v>0}\int_{\mathbf{T}^n_\a}|(e_{\a}f)(x-t)|\Phi_\varepsilon (t)\mathrm{d}t\\
\leq&10^n|\csc \a|^n\sup_{\v>0}\int_{\mathbf{T}^n_\a}{|f_0(x-t)|}\Phi_\varepsilon (t)\mathrm{d}t\\
\leq&10^n|\csc \a|^n\sup_{\v>0}\int_{\rn}{|(f_0\chi_Q)(x-t)|}\Phi_\varepsilon (t)\mathrm{d}t\\
=&:10^n|\csc \a|^n\mathfrak{F}(f_0\chi_Q)(x),
\end{align*}
where $Q$ is the cube $[-|\sin\a|,|\sin\a|]^n$ and $\mathfrak{F}$ is the operator on the set of integrable functions on $\rn$, which is defined by
$$\mathfrak{F}(h)=\sup_{\v>0}|h|\ast \Phi_\v.$$
By Lemma 3.4.5 in \cite{G}, we have
\begin{align*}
\|\mathcal{{H}}_{\a}f\|_{L^{1,\infty}(\mathbf{T}^n_\a)}
\leq10^n|\csc \a|^n\|\mathfrak{F}(f_0\chi_Q)\|_{L^{1,\infty}(\mathbb{R}^n)}
\leq C_{n,\a}\|f_0\chi_Q\|_{L^{1}(\rn)}
\leq C_{n,\a}\|f\|_{L^{1}(\mathbf{T}^n_\a)}.
\end{align*}
Meanwhile, it is easy to see
\begin{align*}
\|\mathcal{{H}}_{\a}f\|_{L^{\infty}(\mathbf{T}^n_\a)}\leq C_{n,\a}\|f\|_{L^{\infty}(\mathbf{T}^n_\a)}.
\end{align*}
By the Marcinkiewicz interpolation theorem, we obtain the $L^p$ conclusion.

Now we turn to conclusion (b). Since the sequence $\{{{F}}_N^{n,\a}\}_{N=0}^\infty$ is an approximate identity and $e_{-\a}C^\infty({\mathbf{T}^n_\a})$ is dense in $e_{-\a}L^1({\mathbf{T}^n_\a})$, we have
$$f\overset{\alpha}{\ast}{{F}}^n_N \to f, \quad f\in e_{-\a}C^\infty({\mathbf{T}^n_\a}),$$
uniformly on ${\mathbf{T}^n_\a}$ as $N\to \infty$. Since $\mathcal{{H}}_\a$ maps $e_{-\a}L^1({\mathbf{T}^n_\a})$ to $L^{1,\infty}({\mathbf{T}^n_\a})$ and using Theorem 2.1.14 in \cite{G}, we obtain that for $f\in e_{-\a}L^1({\mathbf{T}^n_\a})$,  $f\overset{\alpha}{\ast}{{F}}^n_N \to f$ a.e.
\ep
\subsection{Norm convergence of the fractional Bochner-Riesz Means}\label{subsection5.3}
\begin{theorem}\label{th5.3}
For $R>0$ and $m\in \mathbf{Z}^n$, let $a(m,R)$ be complex numbers such that\\
(1) For every $R>0$ there is a $q_R$ such that $a(m,R)=0$ if $|m|>q_R$;\\
(2) There is an $M_0<\infty$ such that $|a(m,R)|\leq M_0$ for all $m\in \mathbf{Z}^n$ and all $R>0$;\\
(3) For all $m\in \mathbf{Z}^n$, the limit of $a(m,R)$ exists as $R\to \infty$ and $lim_{R\to \infty}a(m,R)=a_m$.

Let $1\leq p<\infty$. For $f\in e_{-\a}L^p(\mathbf{T}^n_\a)$ and $x\in \mathbf{T}^n_\a$, define
\begin{align*}
S_R f(x)=\sum_{ m\in \mathbf{Z}^n}a(m,R)\mathcal{F}_\a (f)(m) K_{-\a}(m,x).
\end{align*}
For $h\in e_{-\a}C^\infty(\mathbf{T}^n_\a)$, define
\begin{align*}
A h(x)=\sum_{ m\in \mathbf{Z}^n}a_m \mathcal{F}_\a (f)(m) K_{-\a}(m,x).
\end{align*}
Then for all $f\in e_{-\a}L^p(\mathbf{T}^n_\a)$ the sequence $S_R f$ converges in $L^p(\mathbf{T}^n_\a)$ as $R\to \infty$ if and only if there exists a constant $K<\infty$ such that
\begin{align}\label{eq5.5}
\sup_{R>0}\|S_R\|_{L^p\to L^p}\leq K.
\end{align}
Furthermore, if (\ref{eq5.5}) holds, then for the same constant $K$ and $h\in e_{-\a}C^\infty(\mathbf{T}^n_\a)$, we have
 \begin{align}\label{eq5.6}
\sup_{h\neq 0}\|Ah\|_{L^p\to L^p}\leq K,
\end{align}
and the $A$ extends to a bounded operator $\widetilde{A}$ from $e_{-\a}L^p(\mathbf{T}^n_\a)$ to $L^p(\mathbf{T}^n_\a)$; moreover, for every $ f\in e_{-\a}L^p(\mathbf{T}^n_\a)$ we have $S_R f\to \widetilde{A}f$ in $L^p$ as $R\to \infty$.
\end{theorem}

\begin{proof}
If $S_R f$ converges to in $L^p(\mathbf{T}^n_\a)$, then $\|S_R f\|_{L^p}\leq C_f$, where constant $C_f$ depends on $f$.  Moreover, each $S_R f$ is a bounded operator  from
$e_{-\a}L^p(\mathbf{T}^n_\a)$ to $L^p(\mathbf{T}^n_\a)$ with norm at most $\#\{m\in \mathbf{Z}^n : |m|\leq q_R\}M_0$. Therefore, we obtain that $\{S_R\}_{R>0}$ is a family of $L^p$ bounded linear operators that satisfy $\sup_{R>0}\|S_R\|_{L^p(e_{-a}\mathbf{T}^n_\a)\to L^p(\mathbf{T}^n_\a)}\leq C_f$ for each $f\in e_{-\a}L^p(\mathbf{T}^n_\a)$. It follows form the uniform boundedness theorem that the operator norms of $S_R$ are bounded uniformly in $R$. This proves $(\ref{eq5.5})$.

Conversely, assume $(\ref{eq5.5})$. For $h\in e_{-\a}C^\infty(\mathbf{T}^n_\a)$, by property (ii) and the Lebesgue dominated convergence theorem, we get
\begin{align*}
\lim_{R\to \infty}\sum_{ m\in \mathbf{Z}^n}a(m,R) \mathcal{F}_\a (f)(m) K_{-\a}(m,x)=\sum_{ m\in \mathbf{Z}^n}a_m \mathcal{F}_\a (f)(m) K_{-\a}(m,x).
\end{align*}
Fatou's lemma gives
\begin{align*}
\|Ah\|_{L^p}=\|\lim_{R\to \infty} S_R h\|_{L^p}\leq K\| h\|_{L^p},
\end{align*}
hence (\ref{eq5.6}) holds. Thus $A$ extends to a bounded operator $\widetilde{A}$ on $e_{-\a}L^p(\mathbf{T}^n_\a)$ by density.

For $f\in e_{-\a}L^p(\mathbf{T}^n_\a)$, we need to prove $S_R f\to \widetilde{A}f$ in $L^p$ as $R\to \infty$. For all $\v>0$,  there exists a trigonometric polynomial $P_\a$ such that $\|f-P_\a\|_{L^p}\leq \v$. Notice that $\mathcal{F}_\a(P_\a)(m)= 0$  for $|m|>\text{degree}(P_\a)$. Then there is an $R_0>0$ such that for all $R > R_0$ we have
\begin{align*}
\sum_{|m_1|+\cdots+|m_n|\leq \text{degree}(P_\a)}|a(m,R)-a_m|\mathcal{F}_\a(P_\a)(m)\leq\v.
\end{align*}
For  $R > R_0$, we deduce that
\begin{align*}
\|S_R P_\a- A P_\a\|_{L^p}
&\leq \|S_R P_\a- A P_\a\|_{L^\infty}\\
&\les \sum_{|m_1|+\cdots+|m_n|\leq \text{degree}(P_\a)}|a(m,R)-a_m|\mathcal{F}_\a(P_\a)(m)
\les\v.
\end{align*}
Then
\begin{align*}
\|S_R f- \widetilde{A} f\|_{L^p}
&\leq \|S_R f- S_R P_\a\|_{L^p}+\|S_R P_\a- \widetilde{A} P_\a\|_{L^p}+\|\widetilde{A} P_\a- \widetilde{A} f\|_{L^p}\\
&\les (2K+1)\v
\end{align*}
for $R > R_0$. This completes the proof.
\end{proof}
\begin{definition}
The Bochner-Riesz means of order $\gamma\geq 0$ is the operator
\begin{align*}
B^\gamma_R(f)(x)=\sum_{ m\in \mathbf{Z}^n, |m|\leq R}\Big(1-\frac{|m|^2}{R^2}\Big)^\gamma \mathcal{F}_\a (f)(m) K_{-\a}(m,x).
\end{align*}
\end{definition}
In fact, suppose $a(m,R)=\Big(1-\frac{|m|^2}{R^2}\Big)^\gamma$ for $|m|\leq R$. It is easy to check that  the sequence $\Big(1-\frac{|m|^2}{R^2}\Big)^\gamma$ satisfy the properties in Theorem \ref{th5.3}. Hence, we have following corollary.
\begin{corollary}
Let $1\leq p<\infty$ and $\gamma\geq 0$. For $f\in e_{-\a}L^p(\mathbf{T}^n_\a)$, we have
\begin{align*}
\lim_{R\to\infty}\|B^\gamma_R(f)-f\|_{L^p}=0 \Longleftrightarrow \sup_{R> 0}\|B^\gamma_R\|_{L^p\to L^p}.
\end{align*}
\end{corollary}
\section{Applications to Partial Differential Equations}
It is well known that Fourier theory play an important role in the solution of differential equations, see \cite{DFX, SZH, SZW, YFL, YFS} and the references therein for more details. In this section, we apply the fractional Fourier series to get solutions of the fractional heat equation and fractional Dirichlet problem.
\begin{definition}
For fixed $k>0$, define the fractional heat kernel of order $\a$:
$$H_{t}^\a(x)=\sum_{m\in \mathbf{Z}^n}e^{-4\pi^2|m|^2|\csc\a|^2kt}e^{2\pi im\cdot x},$$
where $t>0$. Note that $H_{t}^\a$ is absolutely convergent for any $t>0$.
\end{definition}

\begin{proposition}
Let $k>0$ be fixed and $f\in e_{-\a}C^\infty(\mathbf{T}^n_\a)$. Then the fractional heat equation of order $\a$
\begin{align}\label{eq6.1}
\displaystyle\frac{\partial}{\partial t}\Big(e_\a(x)F_\a(x,t)\Big)=k\triangle_x \Big(e_\a(x)F_\a(x,t)\Big),\quad t\in (0,\infty), x\in\mathbf{T}^n_\a,
\end{align}
under the initial condition
\begin{align}\label{eq6.111}
F_\a(x,0)=f(x),\quad x\in\mathbf{T}^n_\a,
\end{align}
has a unique solution which is continuous on $[0,\infty)\times \mathbf{T}^n_\a$ and $C^\infty$ on $[0,\infty)\times \mathbf{T}^n_\a$ given by
\begin{align}\label{eq6.2}
 F_\a(x,t)=(f\overset{\alpha}{\ast}{H_{t}^\a})(x)
=\sum_{m\in \mathbf{Z}^n}\mathcal{F}_\a (f)(m)e^{-4\pi^2|m|^2|\csc\a|^2kt}K_{-\a}(m,x).
\end{align}
%\begin{align}\label{eq6.2}
% F_\a(x,t)=&e_\a(x^2)\sum_{m\in \mathbf{Z}^n}\mathcal{F}_\a (\mathcal{M}_{-\a}f)(m)e^{-4\pi^2|m|^2|\csc\a|^2kt}K_{-\a}(m,x).
%\end{align}
\end{proposition}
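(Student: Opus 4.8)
The plan is to reduce (\ref{eq6.1})--(\ref{eq6.111}) to the classical heat equation on the torus $\mathbf{T}^n_\a$ of side length $|\sin\a|$ via the substitution $u(x,t):=e_\a(x)F_\a(x,t)$; note $F_\a=e_{-\a}u$ since $e_\a e_{-\a}\equiv 1$, and that the initial datum $u(\cdot,0)=e_\a f$ lies in $C^\infty(\mathbf{T}^n_\a)$ because $f\in e_{-\a}C^\infty(\mathbf{T}^n_\a)$. Before that I would record the elementary identity
\[
(f\overset{\alpha}{\ast}H_t^\a)(x)=\sum_{m\in\mathbf{Z}^n}\mathcal{F}_\a(f)(m)\,e^{-4\pi^2|m|^2|\csc\a|^2kt}\,K_{-\a}(m,x),
\]
which is a routine interchange of the heat-kernel series (absolutely convergent for $t>0$) with the integral defining $\overset{\alpha}{\ast}$, using the definition of $\mathcal{F}_\a$ and the relation $A_\a A_{-\a}=|\csc\a|$; this is precisely the content of the two expressions in (\ref{eq6.2}). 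Since $e_\a(x)K_{-\a}(m,x)=A_{-\a}^n e_{-\a}(m)e^{2\pi i(m\cdot x)\csc\a}$, the substituted unknown is
\[
u(x,t)=A_{-\a}^n\sum_{m\in\mathbf{Z}^n}\mathcal{F}_\a(f)(m)\,e_{-\a}(m)\,e^{-4\pi^2|m|^2|\csc\a|^2kt}\,e^{2\pi i(m\cdot x)\csc\a},
\]
which is the Fourier-series solution of $\partial_t u=k\triangle_x u$ on $\mathbf{T}^n_\a$, because the functions $x\mapsto e^{2\pi i(m\cdot x)\csc\a}$ are the characters of $\mathbf{T}^n_\a$ and $\triangle_x e^{2\pi i(m\cdot x)\csc\a}=-4\pi^2|m|^2|\csc\a|^2 e^{2\pi i(m\cdot x)\csc\a}$.

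Next I would establish regularity, the equation, and the initial condition. Because $f\in e_{-\a}C^\infty(\mathbf{T}^n_\a)$, Theorem~\ref{th3.7}(a) applied for every $s\in\mathbf{Z}^+$ shows that $|\mathcal{F}_\a(f)(m)|$ decays faster than any power of $|m|$; in particular $\sum_m|m|^N|\mathcal{F}_\a(f)(m)|<\infty$ for all $N$. Hence for every pair of indices $(a,\beta)$ the termwise derivative $\partial_t^a\partial_x^\beta$ of the series for $u$ is dominated on all of $[0,\infty)\times\mathbf{T}^n_\a$ by a convergent series $\sum_m C_{a,\beta,\a}\,|m|^{2a+|\beta|}\,|\mathcal{F}_\a(f)(m)|$, since $e^{-4\pi^2|m|^2|\csc\a|^2kt}\le 1$; by the Weierstrass $M$-test this justifies term-by-term differentiation of all orders, so $u\in C^\infty([0,\infty)\times\mathbf{T}^n_\a)$, and since $e_{-\a}(x)=e^{-\pi i|x|^2\cot\a}$ is smooth, $F_\a=e_{-\a}u$ is $C^\infty$ on $[0,\infty)\times\mathbf{T}^n_\a$, in particular continuous there. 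Differentiating $u$ term by term and using the eigenvalue relation above gives $\partial_t u=k\triangle_x u$, i.e. (\ref{eq6.1}). For the initial condition, $\sum_m|\mathcal{F}_\a(f)(m)|<\infty$ lets me invoke Corollary~\ref{cor3.3}: $F_\a(x,0)=\sum_m\mathcal{F}_\a(f)(m)K_{-\a}(m,x)=f(x)$.

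For uniqueness I would run the energy method on the reduced equation. If $F_\a^{(1)},F_\a^{(2)}$ are two solutions with the stated regularity and the same initial data, set $w:=e_\a(F_\a^{(1)}-F_\a^{(2)})$, a periodic function on $\mathbf{T}^n_\a$ with $\partial_t w=k\triangle_x w$ on $(0,\infty)$, continuous on $[0,\infty)\times\mathbf{T}^n_\a$, and $w(\cdot,0)\equiv 0$. Put $E(t):=\int_{\mathbf{T}^n_\a}|w(x,t)|^2\,dx$. For $t>0$,
\begin{align*}
E'(t)&=2\,\mathrm{Re}\int_{\mathbf{T}^n_\a}\overline{w}\,\partial_t w\,dx
=2k\,\mathrm{Re}\int_{\mathbf{T}^n_\a}\overline{w}\,\triangle_x w\,dx\\
&=-2k\int_{\mathbf{T}^n_\a}|\nabla_x w|^2\,dx\le 0,
\end{align*}
where integration by parts produces no boundary terms by periodicity and $k>0$. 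Since $E$ is continuous on $[0,\infty)$ with $E(0)=0$, it follows that $E\equiv 0$, hence $w\equiv 0$ and $F_\a^{(1)}=F_\a^{(2)}$.

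I expect the only genuinely delicate point to be the uniqueness step at the endpoint $t=0$: a competing solution is a priori only continuous up to $t=0$, so one must establish the energy identity for $t>0$ and then pass to the limit $E(t)\to E(0)=0$ as $t\to 0^+$ (by uniform continuity of $w$ on the compact set $[0,1]\times\mathbf{T}^n_\a$). Everything else---the interchange of sum and integral in the first display, the term-by-term differentiation, and the verification of (\ref{eq6.1})---is bookkeeping with the constants $A_{\pm\a}$, $\csc\a$, $\cot\a$ and the factors $e_{\pm\a}$.
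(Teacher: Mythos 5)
Your proposal is correct, and the existence/regularity half runs along the same lines as the paper: rapid decay of $\mathcal{F}_\a(f)(m)$ (which the paper asserts and you justify via Theorem \ref{th3.7}(a)), term-by-term differentiation of the series, the eigenvalue relation $\triangle_x e^{2\pi i(m\cdot x)\csc\a}=-4\pi^2|m|^2|\csc\a|^2e^{2\pi i(m\cdot x)\csc\a}$, and Corollary \ref{cor3.3} for the initial condition. Where you genuinely diverge is uniqueness. The paper expands a competing solution $G_\a$ in a fractional Fourier series, shows via integration by parts that each coefficient $c^\a_m(t)=\int_{\mathbf{T}^n_\a}G_\a(y,t)K_\a(m,y)\,dy$ solves the ODE $\frac{d}{dt}c^\a_m=-4\pi^2|m|^2|\csc\a|^2k\,c^\a_m$ with $c^\a_m(0)=\mathcal{F}_\a(f)(m)$, and concludes by uniqueness of fractional Fourier coefficients (Proposition \ref{pro2.1}); you instead run the energy method on $w=e_\a(F^{(1)}_\a-F^{(2)}_\a)$. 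Your route avoids any discussion of the convergence of the expansion of $G_\a$ and isolates the only real subtlety (passing $E(t)\to E(0)=0$ as $t\to0^+$, which you handle correctly), but it trades one integration-by-parts caveat for another: $e_\a$ is not $|\sin\a|$-periodic, so while $w$ takes equal values on opposite faces of the cube, $\partial_j w$ does not --- the boundary terms $[\overline{w}\,\partial_j w]$ do not literally vanish but equal $2\pi i x_j\cot\a\,|F^{(1)}_\a-F^{(2)}_\a|^2$ summed over opposite faces, which is purely imaginary and so drops out only after you take the real part. You should say this explicitly rather than claim "no boundary terms by periodicity"; with that emendation the energy identity $E'(t)=-2k\int_{\mathbf{T}^n_\a}|\nabla_x w|^2\,dx$ holds and your argument closes. (The paper's own integration by parts faces the same non-periodicity of $e_\a G_\a$ and is no more careful about it, so neither proof is cleaner on this point.)
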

\begin{proof}
For $f\in e_{-\a}C^\infty(\mathbf{T}^n_\a)$, it is easy to see that the series in (\ref{eq6.2}) is rapidly convergent in $m$ and we obtain that $F_\a(x,t)$ is a continuous function on $[0,\infty)\times \mathbf{T}^n_\a$. In addition, the series can be differentiated term by term with $t > 0$, and thus it produces a $C^\infty$ function on $[0,\infty)\times \mathbf{T}^n_\a$. By Corollary \ref{cor3.3}, $F_\a$ satisfies the initial condition. Next, it remains to check (\ref{eq6.1}).
\begin{align*}
\frac{\partial}{\partial t}\Big(e_\a(x)F_\a(x,t)\Big)=&\frac{\partial}{\partial t}\Big[\sum_{m\in \mathbf{Z}^n}\mathcal{F}_\a (f)(m)e^{-4\pi^2|m|^2|\csc\a|^2kt}A_{-\a}^n e_{-\a}(m)e_{-\a}(m,x)\Big]\\
%=&k\Big[\sum_{m\in \mathbf{Z}^n}\mathcal{F}_\a (\mathcal{M}_{-\a}f)(m)e^{-4\pi^2m^2|\csc\a|^2kt}A_{-\a}^n e_{-\a}(m^2)(-4\pi^2|m|^2|\csc\a|^2)e_\a(m,x)\Big]\\
=&k\Big[\sum_{m\in \mathbf{Z}^n}\mathcal{F}_\a (f)(m)e^{-4\pi^2m^2|\csc\a|^2kt}A_{-\a}^n e_{-\a}(m)\sum_{j=1}^n\frac{\partial^2}{\partial {x_j^2}}e_{-\a}(m,x)\Big]\\
=&k\sum_{j=1}^n\frac{\partial^2}{\partial {x_j^2}}\Big[\sum_{m\in \mathbf{Z}^n}\mathcal{F}_\a (f)(m)e^{-4\pi^2m^2|\csc\a|^2kt}A_{-\a}^n e_{-\a}(m)e_{-\a}(m,x)\Big]\\
%=&k\sum_{j=1}^n\frac{\partial^2}{\partial {x_j^2}}\Big[\sum_{m\in \mathbf{Z}^n}\mathcal{F}_\a (\mathcal{M}_{-\a}f)(m)e^{-4\pi^2m^2|\csc\a|^2kt}K_{-\a}(m,x)\Big]\\
=&k\triangle_x \Big(e_\a(x)F_\a(x,t)\Big),
\end{align*}
where the last equality follows from the convergence of the series.

Finally, assume that there is another solution $G_\a(x,t)$, which is continuous on $[0,\infty)\times {\mathbf{T}^n_\a}$ and $C^\infty$ on $[0,\infty)\times {\mathbf{T}^n_\a}$. Then $G_\a(x,t)$ can be expanded in fractional Fourier series as follows:
$$G_\a(x,t):=\sum_{m\in \mathbf{Z}^n}c_{m}^\a(t)K_{-\a}(m,x),$$
where $$c_{m}^\a(t)=\int_{\mathbf{T}^n_\a}G_\a(y,t)K_\a(m,y)dy.$$
Then $c_{m}^\a(t)$ is a smooth function on $(0,\infty)$ since $G_\a$ is $C^\infty$ on $[0,\infty)\times {\mathbf{T}^n_\a}$.
By equation (\ref{eq6.1}), we have
\begin{align*}
\frac{d}{dt}{c}_{m}^\a(t)=&\int_{\mathbf{T}^n_\a}\frac{\partial}{\partial t}\Big(e_\a(x)G_\a(x,t)\Big)e_{-\a}(x)K_\a(m,x)dx\\
=&k A_\a^n e_{\a}(m) \sum_{j=1}^n\int_{\mathbf{T}^n_\a}\frac{\partial^2}{\partial {x_j^2}}\Big(e_\a(x)G_\a(x,t)\Big)e^{-2\pi i(m\cdot x)\csc\a}dx\\
=&-4\pi^2|m|^2|\csc\a|^2k{c}_{m}^\a(t),
\end{align*}
where the last equality follows from an integration by parts in which the boundary terms cancel each other in view of the periodicity of the integrand in $x$.
In addition, ${c}_{m}^\a(0)=\mathcal{F}_\a(f)(m)$. Therefore, we get
$${c}_{m}^\a(t)=\mathcal{F}_\a(f)(m)e^{-4\pi^2|m|^2|\csc\a|^2kt}.$$
Thus $G_\a=F_\a$ on $[0,\infty)\times \mathbf{T}^n_\a$.
\end{proof}
\begin{remark}
Let $\mathbb{H}_{t}^\a(x)=|\csc\a|^n H_{t}^\a(x\csc\a)$.
The family $\big\{\mathbb{H}_{t}^\a\big\}_{t>0}$ is an approximate identity on $\mathbf{T}^n_\a$.
\end{remark}
\begin{definition}
Define the fractional Poisson kernel of order $\a$
$$P_{t}^\a(x)=\sum_{m\in \mathbf{Z}^n}e^{-2\pi|m|{|\csc\a|}t}e^{2\pi im\cdot x},$$
where $t>0$. Note that $P_{t}^\a$ is absolutely convergent for any $t>0$.
\end{definition}

\begin{proposition}
Let $f\in e_{-\a}C^\infty(\mathbf{T}^n_\a)$. Then the fractional Dirichlet problem of order $\a$
\begin{align}\label{eq6.3}
\begin{cases}
\displaystyle\frac{\partial^2}{\partial t^2}\Big(e_\a(x)F_\a(x,t)\Big)+\sum_{j=1}^n\frac{\partial^2}{\partial {x_j}^2}\Big(e_\a(x)F_\a(x,t)\Big)=0;\; t\in (0,\infty), x\in\mathbf{T}^n_\a,\\
\quad\quad\quad\quad\quad\quad\quad\quad\;\; F_\a(x,0)=f(x);\quad x\in\mathbf{T}^n_\a,
\end{cases}
\end{align}
has a solution which is continuous on $[0,\infty)\times \mathbf{T}^n_\a$ and $C^\infty$ on $[0,\infty)\times \mathbf{T}^n_\a$ given by
\begin{align*}
F_\a(x,t)=(f\overset{\alpha}{\ast}{P_{t}^\a})(x)=\sum_{m\in \mathbf{Z}^n}\mathcal{F}_\a (f)(m)e^{-2\pi|m|{|\csc\a|}t}K_{-\a}(m,x).
\end{align*}
\end{proposition}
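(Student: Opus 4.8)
The plan is to mirror the proof of the fractional heat equation proposition, since the fractional Dirichlet problem has exactly the same structure with the multiplier $e^{-4\pi^2|m|^2|\csc\a|^2 kt}$ replaced by $e^{-2\pi|m||\csc\a|t}$. First I would verify that the candidate series
\begin{align*}
F_\a(x,t)=\sum_{m\in \mathbf{Z}^n}\mathcal{F}_\a (f)(m)e^{-2\pi|m||\csc\a|t}K_{-\a}(m,x)
\end{align*}
is well defined and smooth: since $f\in e_{-\a}C^\infty(\mathbf{T}^n_\a)$, Theorem \ref{th3.7}(a) (or the rapid decay it implies) gives that $\mathcal{F}_\a(f)(m)$ decays faster than any polynomial, so for every $t\ge 0$ the series and all its term-by-term $x$-derivatives converge absolutely and uniformly; for $t>0$ the factor $e^{-2\pi|m||\csc\a|t}$ provides additional (super-exponential in the region that matters) decay, allowing term-by-term differentiation in $t$ as well. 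Hence $F_\a$ is continuous on $[0,\infty)\times\mathbf{T}^n_\a$ and $C^\infty$ on $(0,\infty)\times\mathbf{T}^n_\a$, and at $t=0$ the series is exactly the fractional Fourier series of $f$, which by Corollary \ref{cor3.3} reproduces $f$, so the initial condition $F_\a(x,0)=f(x)$ holds.

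Next I would check the PDE by applying $\partial_t^2+\sum_j\partial_{x_j}^2$ to $e_\a(x)F_\a(x,t)$ term by term, which is justified by the convergence just established. Using $e_\a(x)K_{-\a}(m,x)=A_{-\a}^n e_{-\a}(m)e_{-\a}(m,x)$ with $e_{-\a}(m,x)=e^{2\pi i(m\cdot x)\csc\a}$, one computes $\partial_t^2\big(e^{-2\pi|m||\csc\a|t}e_{-\a}(m,x)\big)=4\pi^2|m|^2|\csc\a|^2 e^{-2\pi|m||\csc\a|t}e_{-\a}(m,x)$, while $\sum_j\partial_{x_j}^2 e_{-\a}(m,x)=-4\pi^2|m|^2|\csc\a|^2 e_{-\a}(m,x)$; the two contributions cancel for each $m$, so the sum is $0$. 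This gives the desired equation on $(0,\infty)\times\mathbf{T}^n_\a$.

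Finally I would identify the series with the fractional convolution $f\overset{\alpha}{\ast}P_t^\a$: writing $\mathbb{P}_t^\a(x)=|\csc\a|^nP_t^\a(x\csc\a)$ and unravelling Definition \ref{def1} exactly as in the Fej\'er-kernel computation preceding Proposition \ref{pro1.3}, one gets
\begin{align*}
(f\overset{\alpha}{\ast}P_t^\a)(x)=|\csc\a|^n e_{-\a}(x)\int_{\mathbf{T}^n_\a}e_\a(y)f(y)\sum_{m\in\mathbf{Z}^n}e^{-2\pi|m||\csc\a|t}e^{2\pi i m\cdot(x-y)\csc\a}\,dy,
\end{align*}
and interchanging sum and integral (legitimate since $P_t^\a$ is absolutely convergent for $t>0$) together with the definition of $\mathcal{F}_\a(f)(m)$ produces precisely $\sum_m \mathcal{F}_\a(f)(m)e^{-2\pi|m||\csc\a|t}K_{-\a}(m,x)$. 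The main obstacle, as in the heat case, is the careful justification of term-by-term differentiation up to the boundary $t=0$ and the interchange of summation and integration; this is handled by the rapid decay of the fractional Fourier coefficients of a $C^\infty$ function rather than by any property of the Poisson multiplier, which only decays exponentially for $t>0$. Note that, unlike the heat equation, uniqueness is not asserted here, so no separate uniqueness argument is needed.
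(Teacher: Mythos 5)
Your proposal is correct and follows essentially the same route as the paper: term-by-term verification that $\partial_t^2$ produces $+4\pi^2|m|^2|\csc\a|^2$ against the $-4\pi^2|m|^2|\csc\a|^2$ from $\Delta_x$, the initial condition via Corollary \ref{cor3.3}, and smoothness from the rapid decay of the fractional Fourier coefficients of a $C^\infty$ function. The paper's proof is simply a terser version of this (it asserts the convergence facts as "easy to see"), so your additional care about term-by-term differentiation and the convolution identification only fills in details the paper omits.
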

\begin{proof}
It is easy to see that $F_\a(x,t)$ is a continuous function on $[0,\infty)\times \mathbf{T}^n_\a$ and $C^\infty$ on $[0,\infty)\times \mathbf{T}^n_\a$. By Corollary \ref{cor3.3}, $F_\a$ satisfies the initial condition. It suffices to verify (\ref{eq6.3}). Notice that
\begin{align*}
\frac{\partial^2}{\partial t^2}\Big(e_\a(x)F_\a(x,t)\Big)
=&-\sum_{m\in \mathbf{Z}^n}\mathcal{F}_\a (f)(m)e^{-2\pi|m|{|\csc\a|}t}A_{-\a}^n e_{-\a}(m^2)\sum_{j=1}^n\frac{\partial^2}{\partial {x_j^2}}e_\a(m,x)\\
=&-\sum_{j=1}^n\frac{\partial^2}{\partial {x_j^2}}\Big[ e_{\a}(x^2)\sum_{m\in \mathbf{Z}^n}\mathcal{F}_\a (f)(m)e^{-2\pi|m|{|\csc\a|}t}K_{-\a}(m,x)\Big]\\
%=&k\sum_{j=1}^n\frac{\partial^2}{\partial {x_j^2}}\Big[\sum_{m\in \mathbf{Z}^n}\mathcal{F}_\a (\mathcal{M}_{-\a}f)(m)e^{-4\pi^2m^2|\csc\a|^2kt}K_{-\a}(m,x)\Big]\\
=&-\triangle_x \Big(e_\a(x)F_\a(x,t)\Big).
\end{align*}
This completes the proof.
\end{proof}
\begin{remark}
Let $\mathbb{P}_{t}^\a(x)=|\csc\a|^n P_{t}^\a(x\csc\a)$.
The family $\{\mathbb{P}_{t}^\a\}_{t>0}$ is an approximate identity on $\mathbf{T}^n_\a$.
\end{remark}

\section{Applications to non-stationary signals}
In this section, we will demonstrate the use of Corollary \ref{cor3.3} and Theorem \ref{th5.2} in the recovery of non-stationary signals. For example, suppose
\[g(x)=\displaystyle\left\{\begin{array}{ll}
x,&-\frac{1}{4}<x\leq \frac{1}{4} ,\\
\frac{1}{4},\,&x=-\frac{1}{4},
\end{array}\right.\]
and $g(x+1/4)=g(x)$ for all $x\in \mathbb{R}$. By taking $\a=\pi/6$, we have
\begin{equation*}
f(x)=e^{-\pi ix^2 \cot\pi/6}g(x)=
\begin{cases}
xe^{-\sqrt3 \pi ix^2},&-\frac{1}{4}<x\leq \frac{1}{4},\\
\frac{1}{4}e^{-\frac{\sqrt3 }{16}\pi i},&x=-\frac{1}{4}.
\end{cases}
\end{equation*}
It is easy to see $f\in e_{-\pi/6}L^1(\mathbf{T}^1_{-\pi/6})$. Note that
\[
\mathcal{F}_\a (f)(m)=\left\{
\begin{array}
[c]{cc}%
\displaystyle iA_{\a}^1e^{\sqrt3\pi i m^2}\frac{(-1)^{m}}{8\pi m}, & m\neq 0,\\
\displaystyle 0, & m=0.
\end{array}
\right.
\]
Therefore, the series $\sum_{m\in\mathbf{Z}}|\mathcal{F}_\a (f)(m)|$ is not convergent and we can not recover the signal $f(x)$ by fractional Fourier inversion. Meanwhile, the approximating method (Theorem \ref{th5.2}) can give another way to recover the non-stationary signal $f(x)$. In fact, for $N\geq 0$, we obtain
\begin{align*}\label{eq1}
(f\overset{\pi/6}{\ast}{F}_N^1)(x)
=&ie^{-\sqrt3 \pi ix^2}\sum_{m=-N}^{N}\Big(1-\frac{|m|}{N+1}\Big)\frac{(-1)^{m}}{4\pi m}e^{4\pi i m x}.
\end{align*}
Figure \ref{fig:ue} shows the real and imaginary part graph of the fractional convolution $f\overset{\pi/6}{\ast}{F}_N^1$ with $N=10,50,100,500$.
\begin{figure}[htb]
\centering
\subfigure[real part graph of $(f\overset{\pi/6}{\ast}{F}_N^1)(x)$]{
\includegraphics[width=0.65\linewidth]{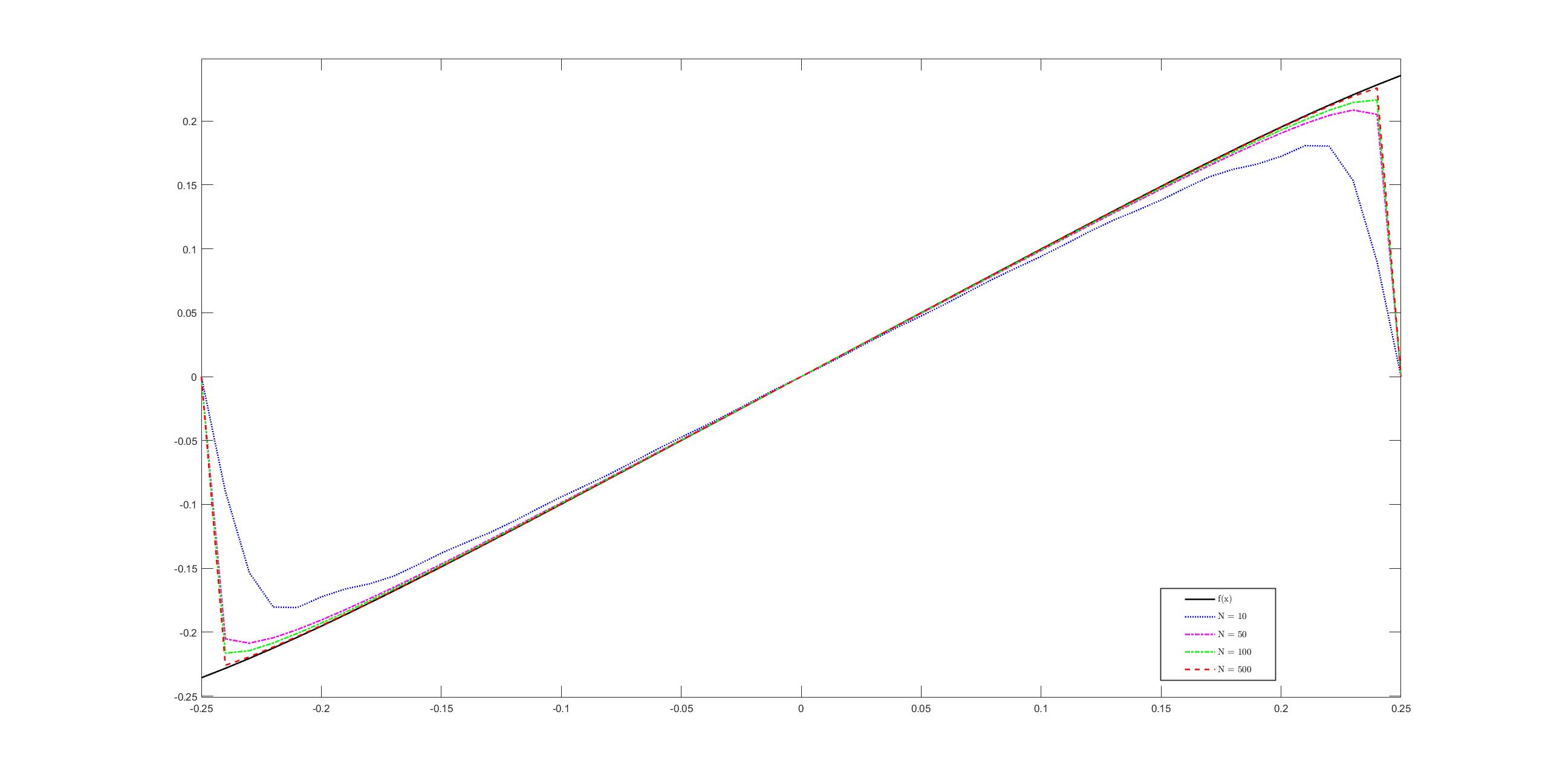}} \hspace{0.5em}
\subfigure[imaginary part graph of $(f\overset{\pi/6}{\ast}{F}_N^1)(x)$]{
\includegraphics[width=0.65\linewidth]{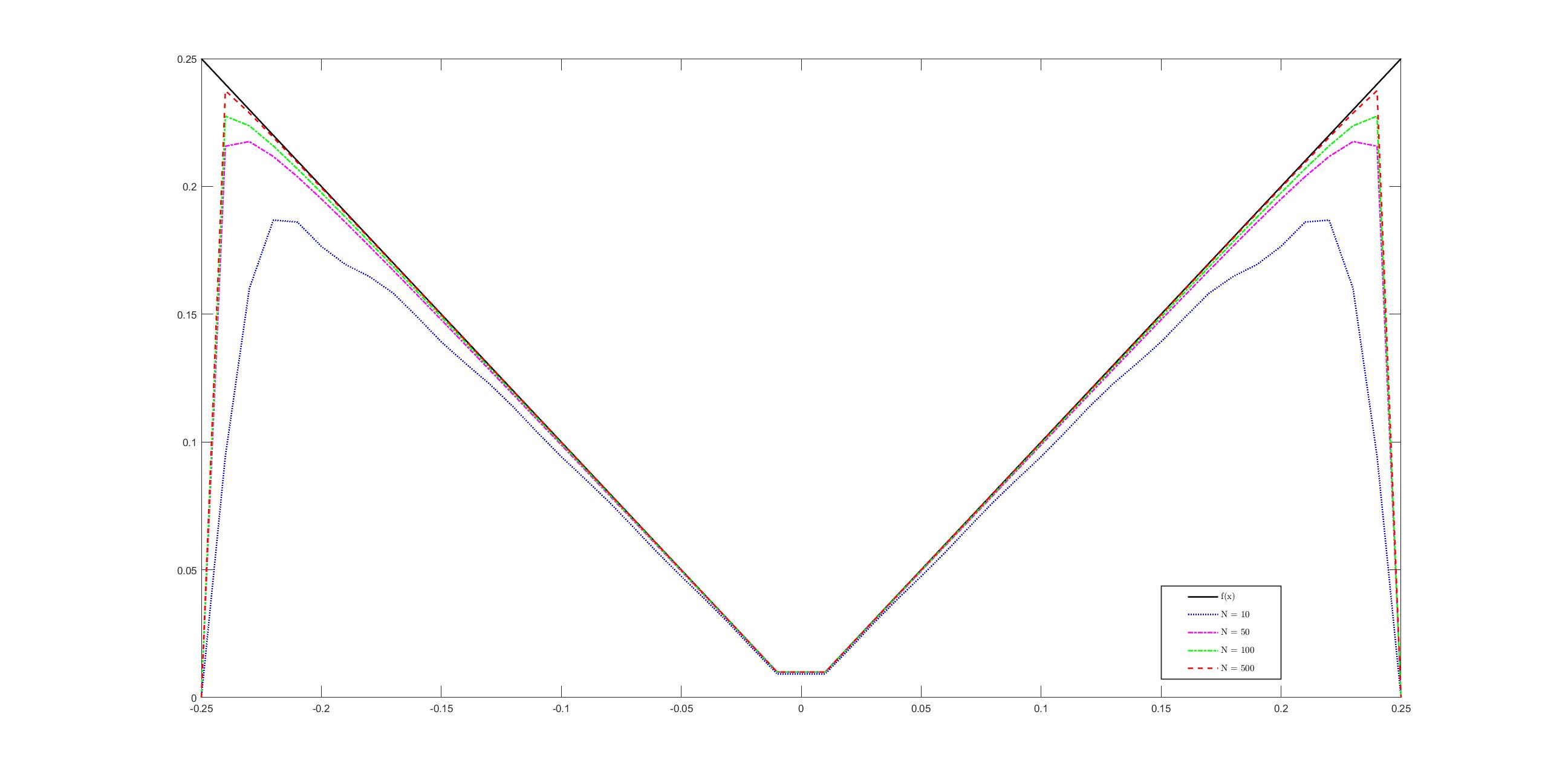}} \caption{real and imaginary
part graphs of $(f\overset{\pi/6}{\ast}{F}_N^1)(x)$}%
\label{fig:ue}%
\end{figure}

By Theorem \ref{th5.2}, we obtain that $(f\overset{\pi/6}{\ast}{F}_N^1)(x)\to f$ for almost every $x\in \mathbf{T}^1_{-\pi/6}$ as $N\to \infty$.

\bigskip
{\bf Acknowledgement:} This work was supported by the National Natural Foundation of China (Nos. 12301118, 12071197, 12171221, 12271232) and the Natural Science Foundation of Shandong Province (Nos. ZR2021MA031, ZR2021MA079).

\bibliographystyle{amsplain}

\end{document}